\def\classification#1{\def\@class{#1}}
\def\classification#1{\def\@class{#1}}
\newcommand{\Out}{\mathop{\mathrm{Out}}}
\DeclareMathOperator{\aut}{Aut}
\newcommand{\Alt}{\mathop{\mathrm{Alt}}}
\newcommand{\Sym}{\mathop{\mathrm{Sym}}}
\newcommand{\Aut}{\mathop{\mathrm{Aut}}}
\def\cent#1#2{{C}_{{#1}}{{(#2)}}}
\def\spanq#1{\left\langle #1 \right\rangle_{\mathbb{F}_q}}
\def\M#1{{\mathrm{M}}_{{#1}}{{(q)}}}
\newcommand{\End}[1]{\mathrm{End}_{\Fq}{(#1)}}
\newcommand{\Ends}[1]{\mathrm{End}^s_{\Fq}{(#1)}}
\newcommand{\Hom}[2]{\mathrm{Hom}_{\Fq}{(#1,#2)}}
\newcommand{\Homs}[2]{\mathrm{Hom}^s_{\Fq}{(#1,#2)}}
\def\res#1{\raise-.5ex\hbox{\ensuremath|}_{#1}}
\newcommand{\widesim}{
  \mathrel{{\scalebox{1.5}[1]{$\sim$}}}
}
\newcommand{\reducesize}[2]{%
  \mathbin{
    \ooalign{
      \raisebox
       {.4ex}
          {$#1\widesim$}
      \cr 
      \hidewidth
      \raisebox
        {-.2ex}
        {\scalebox
          {.75}
          {$#1#2$}
        }
      \hidewidth
    }
  }
}
\newcommand{\stb}[1]{\mathpalette\reducesize{#1}}
\newcommand{\Fq}{\mathbb{F}_q}
\newcommand{\C}{\mathcal{C}}
\newcommand{\SL}{\mathrm{SL}}
\newcommand{\Or}{\mathrm{O}}
\newcommand{\Sp}{\mathrm{Sp}}
\newcommand{\POmega}{\mathrm{P\Omega}}
\newcommand{\PSL}{\mathrm{PSL}}
\newcommand{\PGL}{\mathrm{PGL}}
\newcommand{\PGammaL}{\mathrm{P\Gamma L}}
\newcommand{\GL}{\mathrm{GL}}
\newcommand{\RC}{\mathrm{RC}}
\newcommand{\base}{\mathrm{b}}
\newcommand{\Base}{\mathrm{B}}
\newcommand{\Height}{\mathrm{H}}
\newcommand{\Irred}{\mathrm{I}}
\newtheorem{prop}{Proposition}[section]
\newtheorem{thm}[prop]{Theorem}
\newtheorem{conj}[prop]{Conjecture}
\newtheorem{cor}[prop]{Corollary}
\newtheorem{lem}[prop]{Lemma}
\newtheorem{defn}[prop]{Definition}
\newtheorem{remark}[prop]{Remark}
\numberwithin{equation}{section}
\begin{document}
\title[Height and relational complexity]{On the height and relational complexity of a finite permutation group}

\author{Nick Gill}
\address{ Department of Mathematics, University of South Wales, Treforest, CF37 1DL, U.K.}
\email{nick.gill@southwales.ac.uk}

\author{Bianca Lod\'a}
\address{ Department of Mathematics, University of South Wales, Treforest, CF37 1DL, U.K.}
\email{bianca.loda@southwales.ac.uk}

\author{Pablo Spiga}
\address{Dipartimento di Matematica e Applicazioni, University of Milano-Bicocca, Via Cozzi 55,  20125 Milano, Italy}
\email{pablo.spiga@unimib.it}


\begin{abstract}
Let $G$ be a permutation group on a set $\Omega$ of size $t$. We say that $\Lambda\subseteq\Omega$ is an \emph{independent set} if its pointwise stabilizer is not equal to the pointwise stabilizer of any proper subset of $\Lambda$. We define the \emph{height} of $G$ to be the maximum size of an independent set, and we denote this quantity $\Height(G)$.

In this paper we study $\Height(G)$ for the case when $G$ is primitive. Our main result asserts that either $\Height(G)< 9\log t$, or else $G$ is in a particular well-studied family (the ``primitive large--base groups''). An immediate corollary of this result is a characterization of primitive permutation groups with large ``relational complexity'', the latter quantity being a statistic introduced by Cherlin in his study of the model theory of permutation groups. 

We also study $\Irred(G)$, the maximum length of an irredundant base of $G$, in which case we prove that if $G$ is primitive, then either $\Irred(G)<7\log t$ or else, again, $G$ is in a particular family (which includes the  primitive large--base groups as well as some others).
\end{abstract}

\keywords{permutation group; height of a permutation group; relational complexity; base size}

\maketitle

\section{Introduction}\label{s: intro}

In this paper we study a number of different statistics pertaining to primitive permutation groups. We are interested in understanding which families of primitive permutation groups exhibit large values for these various statistics.
  
From here on, let $G$ be a finite primitive permutation group on a set $\Omega$ of size $t<\infty$. The statistic of most interest to us is the \emph{relational complexity} of a permutation group, $\RC(G)$, a statistic that was first introduced in \cite{cherlin_martin}.  We were motivated by a remark in the same paper in which the authors suggest that it should be possible to classify those primitive groups $G$ for which $\RC(G)>\sqrt{t}$. Our main result (more or less) yields this classification; indeed it applies with the $\sqrt{t}$ replaced by the asymptotically weaker $9\log t+1$ (here, and everywhere, logarithms are base $2$).
  
In the course of our investigations we were inspired by the following result of Liebeck \cite{liebeck} concerning the minimum base size, $\base(G),$ of the permutation group $G$. (The definition of this quantity is given below.) 

\begin{thm}\label{t: liebeck}
Let $G$ be a finite primitive group of degree $t$. Then one of the following holds:
\begin{enumerate}
\item $G$ is a subgroup of $\Sym(m) \wr \Sym(r)$ containing $(\Alt(m))^r$, where the action of $\Sym{(m)}$ is on $k$-subsets of $\{ 1, \dots, m\}$ and the wreath product has the product action of degree $t= \binom{m}{k} ^r$;
\item $\base(G) < 9 \log{t} $.
\end{enumerate}
\end{thm}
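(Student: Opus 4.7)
The plan is to follow a standard O'Nan--Scott reduction combined with a probabilistic (fixed-point ratio) argument of Cameron--Kantor type. First, one invokes the O'Nan--Scott theorem to split primitive groups $G \leq \Sym(\Omega)$ into the five standard families according to the structure of $\mathrm{soc}(G)$: affine, almost simple, product (wreath/product action), diagonal, and twisted wreath. The exceptional family in conclusion (1) lives inside the product-action case, so the goal in every other case is to push $\base(G) < 9\log t$ uniformly.

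The workhorse estimate is the following: if we pick a $b$-tuple $(\omega_1,\dots,\omega_b) \in \Omega^b$ uniformly at random, then the probability it fails to be a base is at most
\[
Q(G,b) \;=\; \sum_{1 \neq x \in G} \left(\frac{|\mathrm{Fix}(x)|}{t}\right)^{b}.
\]
If $Q(G,b)<1$ then $\base(G)\leq b$. So the task reduces to proving, in each O'Nan--Scott case, a uniform fixed-point ratio bound of the form $|\mathrm{Fix}(x)|/t \leq c$ (with $c$ bounded away from $1$) for all non-identity $x$, and then summing a geometric-type series in $|G|$ and taking logs; since $|G| \leq t^{\base(G)}$ one checks that $b = \lceil 9\log t\rceil$ suffices once one has such a ratio bound.

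The affine, diagonal, and twisted-wreath cases are handled by direct combinatorial bounds on $|\mathrm{Fix}(x)|$ coming from the explicit description of the point stabilizer (for affine: point stabilizers are subgroups of $\GL$; fixed-point sets are affine subspaces, which are at most half of $\Omega$ away from the identity, so the ratio is $\leq 1/2$). The product-action case one reduces by a wreath-product inequality to the base-size problem for the underlying almost simple factor acting on $\Omega_0$; so long as that factor is \emph{not} $\Sym(m)$ (or $\Alt(m)$) acting on $k$-subsets, the almost simple bound transfers. The remaining situation, with each factor being $\Sym(m)/\Alt(m)$ on $k$-subsets, is precisely conclusion (1), and one verifies directly (via a known tight example) that here $\base(G)$ really can grow like $\log t$ multiplied by a constant exceeding $9$, so no such uniform bound is possible.

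The main obstacle, and the heart of the argument, is the almost simple case. Here one needs, for every non-exceptional primitive almost simple action, a bound of the form $|\mathrm{Fix}(x)|/t \leq t^{-\alpha}$ for some absolute $\alpha>0$, equivalently an upper bound on $|x^G \cap G_\omega|/|x^G|$. This is what forces the use of the classification of finite simple groups: one walks through alternating, classical, exceptional, and sporadic socles, and in each family uses the known list of maximal subgroups together with standard character/conjugacy-class estimates (à la Liebeck--Saxl) to bound fixed-point ratios. The subcase where the socle is $\Alt(m)$ and the action is on $k$-subsets is exactly where the ratio fails to be small enough, and it is this subcase (upgraded to the product action) that produces the exception in (1); every other almost simple primitive action admits the required bound, and then the probabilistic estimate closes the argument.
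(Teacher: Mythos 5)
This statement (Theorem~\ref{t: liebeck}) is Liebeck's 1984 theorem, which the paper \emph{cites} (\cite{liebeck}) rather than proves, so there is no in-paper argument to compare against. Liebeck's own proof does not use a probabilistic/fixed-point-ratio argument at all: that technique is a later development (Cameron--Kantor, then Liebeck--Shalev) and is only really effective for almost simple groups in \emph{non-standard} actions. Liebeck's argument runs through O'Nan--Scott and then establishes direct bounds on $|G|$ and on stabilizer chains, case by case.

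Your proposed probabilistic route has a genuine gap, and it occurs exactly where you dismiss the cases as easy. For the affine case you assert that fixed-point sets are affine subspaces of index at least $2$, hence ratio $\leq 1/2$, and treat this as sufficient. But a ratio bound of $1/2$ only yields $Q(G,b) \leq |G|\cdot 2^{-b}$, which gives $\base(G) \leq \log_2|G|$ --- the trivial stabilizer-chain bound. For $G=\AGL_n(2)$ acting on $\Omega=\F_2^n$ one has $t=2^n$ and $\log_2|G|\sim n^2 = (\log_2 t)^2$, which is nowhere near $9\log_2 t$; the correct bound $\base(G)=n+1$ comes from a direct observation (the origin together with a basis of $V$ is a base), not from the probabilistic count, and transvections really do have fixed-point ratio $1/2$, so the probabilistic method cannot be pushed further here. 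A similar problem arises for the standard subspace actions of classical groups: for, say, $\PSL_n(q)$ on $1$-spaces, a transvection has fixed-point ratio roughly $1/q$, which is \emph{not} of the form $t^{-\alpha}$ for an absolute $\alpha>0$ (since $t\sim q^{n-1}$ with $n$ unbounded). So the uniform estimate $|\mathrm{Fix}(x)|/t\leq t^{-\alpha}$ that your argument hinges on simply fails for the standard actions, which are precisely the actions that achieve base size of order $\log t$ and thus must be treated by bespoke arguments. The probabilistic method is the right tool for non-standard almost simple actions (where it yields constant base size, cf.\ Theorem~\ref{t: base}); for the affine, diagonal, twisted-wreath, and standard almost-simple/product cases one needs the direct structural estimates that Liebeck uses, of the same flavour as the arguments in \S\S\ref{s: regular normal}--\ref{s: as I} of this paper.
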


Theorem~\ref{t: liebeck} leads us to make the following definition.

\begin{defn}
The group $G$ is a \emph{primitive large--base group} if $G$ is a subgroup of $\Sym(m) \wr\Sym(r)$ containing $(\Alt(m))^r$, where the action of $\Sym{(m)}$ is on $k$-subsets of $\{ 1, \dots, m\}$ and the wreath product has the product action of degree $t= \binom{m}{k} ^r$.
\end{defn}

In this paper we prove two results which are variants of the main result of Theorem~\ref{t: liebeck}; in both, the family of primitive large--base groups appear in a similar way to Theorem~\ref{t: liebeck}, in that they exhibit exceptional behaviour with respect to certain statistics. In order to state these results we must first define the statistics of interest.\footnote{There has been recent improvement on Liebeck's result. We now know that if $G$ is not a primitive large-base group, then $ \base(G) \leq \max\{ \lceil \log t \rceil + 1, 7 \}.$ \cite{crd}.}
  
\subsection{Definition of statistics}  
  
For $\Lambda = \{\omega_1,\dots,\omega_k\} \subseteq \Omega$, we write
$G_{(\Lambda)}$ or $G_{\omega_1, \omega_2, \dots, \omega_k}$  for the
pointwise stabilizer. If $G_{(\Lambda)} = \{1\}$, then we say that $\Lambda$ is a \emph{base}. The size of a smallest possible base is known as the \emph{base size} of $G$ and is denoted $\base(G)$. 

We say that a base is a \emph{minimal base} if no proper subset of it is a base. We denote the maximum size of a minimal base by $\Base(G)$.

Given an ordered sequence of elements of $\Omega$, $[\omega_1,\omega_2,\dots, \omega_k]$, we can study the associated \emph{stabilizer chain}:
\[
  G \geq G_{\omega_1} \geq G_{\omega_1, \omega_2}\geq 
  G_{\omega_1, \omega_2, \omega_3} \geq \dots \geq
  G_{\omega_1, \omega_2, \dots, \omega_k}
\]
If all the inclusions given above are strict, then the stabilizer chain is called \emph{irredundant}. If, furthermore, the group $G_{\omega_1,\omega_2,\dots, \omega_k}$ is trivial, then the sequence $[\omega_1,\omega_2,\dots, \omega_k]$ is called an \emph{irredundant base}. The size of the longest possible irredundant base is denoted $\Irred(G)$. Note that an irredundant base is not a base (because it is an ordered sequence, not a set).

Finally, let $\Lambda$ be any subset of $\Omega$. We say that $\Lambda$ is an \emph{independent set} if its pointwise stabilizer is not equal to the pointwise stabilizer of any proper subset of $\Lambda$. We define the \emph{height} of $G$ to be the maximum size of an independent set, and we denote this quantity $\Height(G)$. 

There is a basic connection between the four statistics we have defined so far:
\begin{equation}\label{eq: inequality}
 \base(G) \leq \Base(G) \leq \Height(G) \leq \Irred(G)  \leq \base(G)\log t.
\end{equation}
The proof of \eqref{eq: inequality} goes as follows: The first inequality is obvious. For the second, suppose that $\Lambda=\{\omega_1,\dots,\omega_k\}$ is a minimal base; then observe that $\Lambda$ is also an independent set. For the third, suppose that $\Lambda=\{\omega_1,\dots,\omega_k\}$ is an independent set; then observe that the ordered list $[\omega_1,\dots, \omega_k]$ can be extended to form an irredundant base.

The fourth inequality has been attributed to Blaha \cite{blaha} who, in turn, describes it as an ``observation of Babai'' \cite{babai}. Suppose that $G$ has a base of size $b=\base(G)$. Then, in particular $|G|\leq t^b$. On the other hand, any irredundant base and any independent set have size at most $\log|G|$. We conclude that $\Height(G) \leq \log(t^b)$, and the result follows.

We have one more statistic to define. To start, suppose that $r,n\in \mathbb{N}$ with $r\leq n$. If $I,J\in \Omega^n$, then we write $I\stb{r} J$, and say that $I$ is \emph{$r$-equivalent} to $J$ with respect to the action of $G$, if $G$ contains elements that
map every subtuple of size $r$ in $I$ to the corresponding subtuple in
$J$ i.e. $$\textrm{for every } k_1, k_2, \dots, k_r\in\{ 1,
\ldots, n\}, \textrm{ there exists } h \in G \textrm{ with }I_{k_i}^h=J_{k_i}, \textrm{ for every }i \in\{
1, \ldots, r\}.$$ 
Here $I_k$ denotes the $k^{\text{th}}$ element of tuple $I$ and $I^g$ denotes the image of $I$ under the action of $g$. Note that $n$-equivalence simply requires the existence of an element of $G$ mapping $I$ to $J$.

The group $G$ is said to be of {\it relational complexity $r$} if $r$ is the smallest integer such that, for all $n\in\mathbb{N}$ with $n\geq r$ and for all $n$-tuples $I, J \in \Omega^n$, 
\[
 I\stb{r} J \, \, \Longrightarrow I\stb{n} J.
\]
In this case we write $\RC(G)=r$. The fact that relational complexity is well-defined takes a little proving. There is an equivalent definition of $\RC(G)$ in terms of relational structures: $\RC(G)$ is the least $k$ for which $G$ can be viewed as an automorphism group acting naturally on a homogeneous relational system whose relations are $k$-ary \cite{cherlin2}. 

The relationship between relational complexity and the statistics defined above is given by the following inequality (which is proved in \S\ref{s: lemmas}):
\begin{equation}\label{eq: inequality 2}
 \RC(G) \leq \Height(G) + 1.
\end{equation}

Note that in what follows, we may sometimes include the set on which we are acting in our statistical notation, if this set is in any doubt. So, for instance, $\Height(H)$ and $\Height(H,\Delta)$ both mean ``the height of the permutation group $H$ on a set $\Delta$''.

\subsection{Main results}  

Our first main result is a Liebeck-type result for height.

\begin{thm}\label{t: height} 
Let $G$ be a finite primitive group of degree $t$. If $G$ is not a primitive large--base group, then
\[
\Height(G) <9 \log{t}.
\]
\end{thm}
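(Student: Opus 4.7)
The plan is to proceed via the O'Nan--Scott classification of finite primitive permutation groups, and to bound $\Height(G)$ case-by-case. The chain of inequalities~\eqref{eq: inequality} gives $\Height(G) \leq \base(G)\log t$, so combining directly with Liebeck's Theorem~\ref{t: liebeck} yields only the too-weak bound $\Height(G) < 9(\log t)^2$ for groups that are not primitive large-base. To push this down to the sharp $9\log t$, I would either strengthen the base-size bound to a \emph{constant} (possible for most O'Nan--Scott types), or, where this fails, bound height directly using the internal structure of the stabilizer chain.

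For almost simple primitive groups of non-standard type, the deep results of Burness and collaborators give $\base(G)\leq 7$, so $\Height(G)\leq 7\log t$ immediately. The standard-type almost simple actions that are \emph{not} primitive large-base groups form a relatively short list: principally actions of classical groups on various families of subspaces, together with some small-rank actions of alternating and symmetric groups. For these I would carry out an explicit case analysis. For affine groups $G = V\rtimes G_0 \leq \AGL(V)$, diagonal type, and twisted wreath type, the base size is controlled by a small constant (or by existing linear-algebraic bounds), and so $\Height(G)\leq c\log t$ follows quickly. For product-action groups $G\leq H\wr\Sym(\ell)$ acting on $\Omega=\Delta^\ell$, I would prove a reduction: an independent set in $\Omega$ projects coordinatewise to a structure controlled by $\Height(H,\Delta)$, so $\Height(G,\Omega)$ is bounded in terms of $\ell$ and $\Height(H,\Delta)$; the primitive large-base groups are precisely those product-type examples whose component action is itself large-base, so the induction closes up.

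The main obstacle, I expect, will be the standard subspace actions of classical groups, where $\base(G)$ is genuinely of order $\log t$ and the inequality~\eqref{eq: inequality} is therefore useless. Here I would abandon that inequality and work with the stabilizer chain directly: successive pointwise stabilizers along an independent sequence of subspace-like points are parabolic, and their indices grow like $q^c$ rather than merely like $2$. This refinement converts the naive bound $\Height(G)\leq \log_2|G|$ into something closer to $\log_q|G|$, and since the degree $t$ is at least a polynomial in $q$ of large degree, one recovers the $9\log t$ bound. The technical crux is checking \emph{independence} (not merely strict descent): removing any single element of the sequence must enlarge the stabilizer, which is a subtler demand than producing a long descending chain, and it will require a uniform treatment across the linear, symplectic, orthogonal, and unitary families together with their various geometric subspace actions.
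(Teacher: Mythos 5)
Your overall architecture matches the paper's: O'Nan--Scott reduction, Burness et al.\ for non-standard almost simple actions, a product-action reduction to the component, and a direct attack on the remaining subspace actions of classical groups. That much is sound, and for the non-AS types the paper does indeed argue via $\Irred(G)\le\log t+1$ (regular normal subgroup) or $\Irred(G)\le\log t$ (diagonal), then feeds these into the wreath-product reduction.

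However, your plan for the subspace actions has a genuine gap. You propose to refine $\Height(G)\le\log_2|G|$ to roughly $\log_q|G|$ by arguing that each step of an independent chain of subspace stabilizers drops the order by a factor $q^c$. Even granting this optimistically, $\log_q|G|\approx n^2$ for a classical group of rank $\sim n$, while for the action on $m$-spaces one only has $\log t\approx m(n-m)\log q$; so the target $9\log t\approx 9m(n-m)\log q$ is \emph{smaller} than $n^2$ whenever $m$ (or $n-m$) is small and $q$ is bounded. Concretely, for $\PGL_n(2)$ on projective points, $\log_q|G|\approx n^2$ but $9\log t\approx 9(n-1)$, so the refinement does not close the gap even in principle. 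The issue is that the naive ``index $\ge q$ per step'' count cannot see the geometric constraint that an independent family of $m$-spaces cannot be too numerous. The paper's key lemma filling this hole (Lemma~\ref{lemma6}) is that any maximal independent set $\{W_1,\dots,W_\ell\}$ of $m$-spaces has $\dim\spanq{W_1,\dots,W_\ell}>n-m$; ordering the set so the span grows at each of roughly $n-m$ initial steps and then counting inside the matrix algebra $M_n(q)$ yields $\Height(\PGL_n(q),\Omega_m)<2mn$, which when compared against $t>q^{m(n-m)}$ gives the required $O(\log t)$ bound uniformly in $q$. Without some substitute for that span lemma, your direct stabilizer-chain approach fails precisely on the $\PSL$-type subspace actions you correctly identify as the crux.

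A secondary inaccuracy: for affine groups $\base(G)$ is not bounded by a constant (for $\AGL_d(p)$ it is $d+1\approx\log_p t$), so $\Height(G)\le\base(G)\log t$ again gives $(\log t)^2$. The paper handles this by bounding $\Irred(G)$ directly via the chain of subgroups of the regular normal subgroup generated by the fixed base points, not via base size; your parenthetical ``existing linear-algebraic bounds'' gestures at the right idea but should be made precise in that direction.
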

Equation~\eqref{eq: inequality 2} immediately yields the following corollary.

\begin{cor}\label{c: rc}
Let $G$ be a finite primitive group of degree $t$. If $G$ is not a primitive large--base group, then
\[
\RC(G) < 9 \log{t}+1. 
\]
\end{cor}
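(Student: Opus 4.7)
The plan is entirely forced: the statement is presented as ``an immediate corollary'' of Theorem~\ref{t: height} and the inequality \eqref{eq: inequality 2}, so the proof should be a two-line chain of inequalities assembling those two ingredients, and nothing more.

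First I would apply Theorem~\ref{t: height}: since $G$ is primitive of degree $t$ and, by hypothesis, $G$ is not a primitive large--base group, the theorem yields the strict bound
\[
\Height(G) < 9 \log t.
\]
Since strict inequalities between real numbers are preserved by addition of a constant, this gives $\Height(G) + 1 < 9 \log t + 1$.

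Second I would invoke \eqref{eq: inequality 2}, which asserts that for any finite permutation group one has $\RC(G) \le \Height(G) + 1$. (The proof of this inequality is deferred to \S\ref{s: lemmas}, but the excerpt permits me to assume it here.) Chaining with the bound from the previous step gives
\[
\RC(G) \;\le\; \Height(G) + 1 \;<\; 9 \log t + 1,
\]
which is the desired conclusion.

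There is no genuine obstacle: the substantive content is in Theorem~\ref{t: height} (to which the bulk of this paper is devoted) and in the proof of \eqref{eq: inequality 2} (given in \S\ref{s: lemmas}). The only subtlety worth flagging is that one wants to end with a strict inequality, so it matters that Theorem~\ref{t: height} provides a strict bound on $\Height(G)$ rather than a $\le$ bound; once that is in place, the ``$+1$'' slack in \eqref{eq: inequality 2} is absorbed harmlessly on both sides. No case analysis involving primitive large--base groups is needed beyond inheriting the exclusion from the hypothesis of Theorem~\ref{t: height}.
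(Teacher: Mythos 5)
Your proof is exactly the paper's: Theorem~\ref{t: height} gives $\Height(G) < 9\log t$, and combining with $\RC(G)\le \Height(G)+1$ from \eqref{eq: inequality 2} yields the claim. Nothing further is needed.
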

  
Corollary~\ref{c: rc} yields the classification proposed in \cite{cherlin_martin} in a very strong form (since our bound is logarithmic in $t$, whereas the original suggestion was for $\sqrt{t}$). Note, however, that we do not assert that all of the primitive large--base groups genuinely violate the bound in Corollary~\ref{c: rc}. A precise analysis of the relational complexity of the primitive large--base groups was started in \cite{cherlin_martin} but, although a great deal of progress was made, much remains to be done. Note, though, that \cite[Theorem~2]{cherlin2} implies that the family of primitive large--base groups contain an infinite number of groups violating the bound in Corollary~\ref{c: rc} including, in particular, the alternating groups in their natural action. It is clear that the same is true with respect to Theorem~\ref{t: height}: both $\Alt(t)$ and $\Sym(t)$ in their natural action violate the bound in Theorem~\ref{t: height} for $t$ large enough. In future work we will show that the same is true of $\Alt(t)$ and $\Sym(t)$ in their action on $k$-sets.

Equation~\eqref{eq: inequality} yields a second corollary.

\begin{cor}\label{c: B}
Let $G$ be a finite primitive group of degree $t$. If $G$ is not a primitive large--base group, then
\[
\Base(G) < 9 \log{t}. 
\]
 \end{cor}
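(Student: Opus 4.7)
The plan is essentially one line: this corollary is meant to be a direct consequence of Theorem~\ref{t: height} chained with one of the inequalities in \eqref{eq: inequality}. Specifically, the chain
\[
\base(G) \leq \Base(G) \leq \Height(G) \leq \Irred(G) \leq \base(G)\log t
\]
contains the second comparison $\Base(G) \leq \Height(G)$, so any upper bound on $\Height(G)$ transfers immediately to $\Base(G)$.

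Concretely, I would proceed as follows. Assume that $G$ is a finite primitive group of degree $t$ which is not a primitive large--base group. By Theorem~\ref{t: height}, we have $\Height(G) < 9\log t$. By \eqref{eq: inequality}, $\Base(G) \leq \Height(G)$. Combining these two inequalities gives $\Base(G) < 9\log t$, which is precisely the statement of Corollary~\ref{c: B}.

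Since the author has already supplied the justification of \eqref{eq: inequality} in the discussion immediately following its statement (namely, that any minimal base is automatically an independent set), there is nothing further to verify. There is no real obstacle here: the content of the corollary sits entirely in Theorem~\ref{t: height}, and \eqref{eq: inequality} merely repackages it for the statistic $\Base$. The only thing worth flagging is that the inequality $\Base(G) \leq \Height(G)$ requires the (already noted) observation that a minimal base $\{\omega_1,\dots,\omega_k\}$ is in particular independent: if some proper subset had the same pointwise stabilizer, that subset would itself be a base, contradicting minimality. Given that, the corollary is immediate.
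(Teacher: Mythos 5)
Your proof is correct and is precisely the paper's intended argument: the paper derives Corollary~\ref{c: B} from Theorem~\ref{t: height} via the inequality $\Base(G)\leq\Height(G)$ in \eqref{eq: inequality}, exactly as you do.
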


Of course, we could get yet another corollary by replacing $\Base(G)$ by $\base(G)$ here, but this would only reprise Theorem~\ref{t: liebeck}. What, then, of $\Irred(G)$? Our main result concerning this statistic is the following.

\begin{thm}\label{t: irred}
 Let $G$ be a finite primitive group of degree $t$. Then one of the following holds:
 \begin{enumerate}
  \item There exists an almost simple group $A$, with socle $S$, such that $G$ is a subgroup of $A \wr \Sym(r)$ containing $S^r$, the action of $A$ is  one of the following:
  \begin{enumerate}
  \item the action of $\Sym{(m)}$ on $k$-subsets of $\{ 1, \dots, m\}$ (so degree $s=\binom{m}{k}$);
  \item the action of a classical group on a set of subspaces of the natural module, or on a set of pairs of subspaces;
  \end{enumerate}
   and the action of the wreath product has the product action of degree $t=s^r$, where $s$ is the degree of the action of $A$.
  \item $\Irred(G) <  7\log t$.
 \end{enumerate}
\end{thm}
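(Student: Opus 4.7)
The plan is to follow an O'Nan--Scott analysis, paralleling the approach taken for Theorem~\ref{t: height} but working directly with $\Irred$. The fundamental tool is the inequality $\Irred(G) \leq \base(G)\log t$ from~\eqref{eq: inequality}, which upgrades any constant (or near-constant) bound on $\base(G)$ into the desired logarithmic bound on $\Irred(G)$. For the affine, diagonal, twisted wreath, and other non-almost-simple, non-product-action O'Nan--Scott classes, existing literature (including the footnoted refinement of Theorem~\ref{t: liebeck}) provides bounds of the form $\base(G) \leq c$ or $\base(G) \leq \lceil \log t \rceil + c$ for small $c$; substituting these into the inequality yields $\Irred(G) < 7\log t$ with room to spare.

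For the almost simple case (without product action), I would invoke the classification of finite simple groups together with the large body of base-size results: sporadic groups have $\base(G) \leq 7$; exceptional Lie type groups have $\base(G) \leq 6$; classical groups not acting on subspaces or pairs of subspaces have $\base(G) \leq 5$; and alternating groups in actions other than on $k$-subsets of $\{1,\dots,m\}$ also have small base size. In each case the bound on $\base(G)$, combined with \eqref{eq: inequality}, gives $\Irred(G) < 7\log t$. The exceptional actions --- namely $k$-subsets for alternating groups, and subspace or pair-of-subspace actions for classical groups --- are precisely the families appearing in case~(1) of the theorem, so they are permitted rather than eliminated here.

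Finally, for the product action case $G \leq A \wr \Sym(r)$ on $\Delta^r$ with $A$ almost simple of socle $S$ and $|\Delta|=s$, I would prove a wreath-product lemma of the form $\Irred(G,\Delta^r)\leq r\cdot \Irred(A,\Delta) + c(r)$, where $c(r)$ accounts for points whose role is to refine the action of the top group $\Sym(r)$. When $A$ is not one of the exceptional almost simple actions listed above, the previous step gives $\Irred(A,\Delta) < 7\log s$, and since $t = s^r$ this yields $\Irred(G) < 7r\log s + c(r) = 7\log t + c(r)$; a careful chase of constants then keeps the result strictly below $7\log t$. Otherwise $A$ is itself of the exceptional type and $G$ lies in case~(1). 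The main obstacle is this wreath-product step: naively iterating $\Irred(G)\leq \base(G)\log t$ produces only a quadratic-in-$\log t$ bound, so one must instead analyse irredundant stabilizer chains in product actions directly, tracking how each additional point either strictly reduces the base group stabilizer in some coordinate or strictly refines an orbit under the top group, and then bookkeep these contributions so as not to lose the sharp leading constant.
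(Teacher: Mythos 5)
Your strategy has a fatal gap in exactly the place where you would like it to be routine, namely the affine (HA) case. You explicitly note that naively composing the inequality $\Irred(G)\leq \base(G)\log t$ loses a factor of $\log t$, but you treat this as an obstacle only at the wreath-product step. In fact it already bites for primitive groups with a regular normal subgroup: for $G=\AGL_n(2)$ acting on $t=2^n$ points one has $\base(G)=n+1=\log t+1$, so the ``refinement of Liebeck'' you cite cannot possibly give a constant bound on $\base(G)$, and the inequality $\Irred(G)\leq\base(G)\log t$ yields only $\Irred(G)\lesssim (\log t)^2$. The paper avoids this by proving directly (Proposition~\ref{p: regular normal}) that if $G$ has a regular normal subgroup $N$ then $\Irred(G)\leq\log|N|+1=\log t+1$: the argument identifies $\Omega$ with $N$, views $G_\omega$ as acting by conjugation, and converts the strictly decreasing stabilizer chain into a strictly increasing chain of subgroups $\langle\omega_2\rangle<\langle\omega_2,\omega_3\rangle<\cdots$ inside $N$. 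Nothing in your proposal produces this linear bound for the affine class, and without it the proof does not close.

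There is a second, smaller but real, gap in your handling of almost simple standard actions. You say that alternating groups in actions other than on $k$-subsets ``also have small base size,'' but the action on uniform partitions of $\{1,\dots,m\}$ is standard with unbounded base size, and it is \emph{not} one of the permitted actions in case~(1)(a) (only $k$-subsets appear there). Likewise the even-characteristic case $G_0=\Sp_{2m}(2^e)$ acting on cosets of $\Or_{2m}^{\pm}(q)$ is a subspace (hence standard) action not covered by case~(1)(b). Both therefore require a direct bound on $\Irred$. The paper handles partitions by combining $\ell(\Sym(n))\leq\tfrac{3}{2}n$ with the lower bound $t\geq 3^{ab/2}$, and handles the symplectic/orthogonal case via a bespoke analysis of quadratic forms polarising to the symplectic form, obtaining $\Irred(G,\Omega^\epsilon)\leq 2m+1$ before inflating by outer automorphisms. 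These cases, together with the graph-automorphism cases $\Sp_4(2^a)$ and $\POmega_8^+(q)$, need explicit arguments; base-size constants alone will not supply them.

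Finally, on the wreath-product step itself: the lemma you sketch is essentially the paper's Lemma~\ref{l: viva1} (which shows $\Irred(A\times B)=\Irred(A)+\Irred(B)-1$) combined with Lemma~\ref{l: chain} and $\ell(\Sym(r))\leq\tfrac{3}{2}r$. This is the right idea, but to land strictly below $7\log t$ the paper first proves the almost simple bound with the smaller constant~$6$ (not~$7$), treating $\mathrm{M}_{24}$ separately since $\base(\mathrm{M}_{24})=7$; the slack of $1$ in the constant is what absorbs the $\tfrac{3}{2}r$ term from the top group. Your proposal, as stated, would deliver exactly $7\log t + c(r)$ and then hope to ``chase constants,'' but without the room created by proving the tighter almost-simple bound there is no slack to chase.
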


The listed possible actions of $A$ are a subset of the so-called ``standard actions'' of almost simple groups. In the second case, let $V$ be a finite-dimensional vector space over $\Fq$; if $S=\PSL(V)$, then such an action is on all subspaces of some fixed dimension $k$ in $V$; if $S={\rm Cl}(V)$, one of the other classical groups defined on $V$, then such an action is either on all non-degenerate subspaces of some fixed dimension $k$ in $V$, or on  totally-isotropic subspaces of some fixed dimension $k$ and of a given type in $V$.

Theorem~\ref{t: irred} is not as strong as we would like -- we would like to eliminate the groups listed at item (1b), so that we end up with the same exceptional family as in the other main results. Indeed we conjecture that something like Theorem~\ref{t: height} should hold for $\Irred(G)$ also:

\begin{conj}\label{conj: I}
There exists a constant $C>0$ such that if $G$ is a finite primitive group of degree $t$ that is not a primitive large--base group, then
\[
\Irred(G) < C \log{t}. 
\]
 \end{conj}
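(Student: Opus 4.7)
The plan is to strengthen Theorem~\ref{t: irred} by eliminating the exceptional family in case~(1b). Thus we reduce to proving the bound $\Irred(G) < C \log t$ whenever $G \leq A \wr \Sym(r)$ contains $S^r$ with $A$ an almost simple classical group acting on subspaces (or pairs of subspaces) of its natural module, $G$ is not itself a primitive large-base group, and the degree is $t = s^r$. The strategy splits into a base case (bounding $\Irred(A,\Delta)$ on the underlying degree-$s$ action of $A$) and an inductive step (lifting this bound through the product action).

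For the base case, one would establish that if $A$ is an almost simple classical group with socle $S$ acting on such a set $\Delta$ of subspaces, then there exists an absolute constant $C_0$ with $\Irred(A,\Delta) < C_0 \log s$. The first-pass tool is the inequality $\Irred(A,\Delta) \leq \base(A,\Delta) \log s$ from \eqref{eq: inequality}. Recent sharp results on base sizes for classical groups in subspace actions (those of Burness and collaborators, in particular) provide $\base(A,\Delta) = O(1)$ in the overwhelming majority of these actions, provided $G$ is not a primitive large--base group. The borderline situations — for example $\PSL_n(q)$ on $k$-subspaces with $k$ small but not in the large-base regime, or $\PSp$, $\PSU$, $\POmega$ acting on totally isotropic subspaces of small dimension — would require a direct analysis of maximal irredundant stabilizer chains, exploiting the geometry of subspace intersections and chains of flags.

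For the inductive step, one would need a lemma of the form
\[
\Irred(G,\Delta^r) \leq r \cdot \Irred(A,\Delta) + f(r)
\]
for $G$ acting in the product action, with $f(r) = O(\log r)$. This should follow from a projection-and-extension argument: any maximal irredundant sequence in $\Delta^r$ projects coordinatewise to sequences in $\Delta$ whose ``new'' entries must in total contribute at most $r \cdot \Irred(A,\Delta)$ strict drops in the stabilizer chain, while the remaining drops arise from the top $\Sym(r)$-factor and are controlled by the composition-length bound $\log|\Sym(r)| = O(r \log r)$ distributed across the coordinates. Combining the lemma with the base case yields $\Irred(G) \leq C_0 r \log s + O(\log r) = O(\log t)$.

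The principal obstacle is the base case. Although base sizes for subspace actions of classical groups are by now well understood, the irredundant base length can be considerably larger than $\base$, and the factor $\log s$ in the trivial bound $\Irred \leq \base \cdot \log s$ is too lossy precisely in the borderline subspace actions. Obtaining a genuine $O(\log s)$ bound on $\Irred(A,\Delta)$ — rather than merely $O((\log s)^2)$ — will likely require either refined combinatorial arguments tailored to each family of classical groups (in the spirit of the proof of Theorem~\ref{t: height}) or an inductive approach on the dimension of the underlying classical geometry that sidesteps $\base$ entirely.
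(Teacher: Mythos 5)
This statement is Conjecture~\ref{conj: I}, which is an \emph{open problem} in the paper; the authors explicitly state that Theorem~\ref{t: irred} ``is not as strong as we would like'' and pose the conjecture precisely because they could not eliminate the exceptional family (1b). You have sketched a plausible programme, not a proof, and you yourself name the gap: there is no established $O(\log s)$ bound on $\Irred(A,\Delta)$ for the borderline subspace actions of classical groups. That gap is exactly the one the paper cannot close. At the end of \S\ref{s: as H} the authors explain why: in Lemma~\ref{l: pgl} they crucially \emph{reorder} the independent set before analysing the stabilizer chain, which is legitimate when bounding $\Height(G)$ (independent sets are unordered) but illegitimate when bounding $\Irred(G)$ (irredundant bases are ordered sequences). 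Your proposal to adapt ``refined combinatorial arguments in the spirit of the proof of Theorem~\ref{t: height}'' runs directly into this obstruction; you would need a genuinely new idea that controls the chain of stabilizers without the freedom to permute the points.

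Two smaller inaccuracies in your inductive step. First, you state the needed lemma with $f(r) = O(\log r)$, but then invoke the bound on the top $\Sym(r)$ factor as ``$\log|\Sym(r)| = O(r\log r)$''; these are inconsistent. The paper's Lemma~\ref{l: viva1} together with Lemma~\ref{l: chain} and $\ell(\Sym(r)) \le \tfrac{3}{2}r$ from \cite{cameron_solomon_turull} gives the sharp statement
\begin{equation*}
\Irred(G,\Delta^r) \le r\,\Irred(A,\Delta) + \tfrac{3}{2}r,
\end{equation*}
i.e.\ $f(r) = O(r)$, not $O(\log r)$. This is still adequate (since $\log t = r\log s$ with $s$ bounded away from $1$), but the additive term does not improve to $O(\log r)$. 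Second, the first-pass bound $\Irred \le \base \cdot \log s$ from \eqref{eq: inequality} combined with $\base(A,\Delta) = O(1)$ only holds in non-standard actions, not in the subspace actions of (1b) where $\base(A,\Delta)$ genuinely grows with the geometry; so the ``overwhelming majority'' claim should be removed, as the borderline cases are in fact the entire content of the problem.
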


 \subsection{Structure of the paper} 
 Our proofs of Theorems~\ref{t: height} and \ref{t: irred} make use of the O'Nan--Scott Theorem which divides the class of primitive permutation groups into various families. There are various ways of stating the O'Nan--Scott Theorem; in this paper we make use of the division into eight types which is described in \cite{praeger}. 
 
After proving some useful background lemmas in \S\ref{s: lemmas}, the work in \S\S\ref{s: regular normal}, \ref{s: diagonal} and \ref{s: product} when combined with the O'Nan--Scott Theorem reduces the problem of proving Theorem~\ref{t: irred} to a question about almost simple groups. This question is addressed in \S\ref{s: as I} where the proof of Theorem~\ref{t: irred} is completed.

The work in \S\ref{s: product}, when combined with Theorem~\ref{t: irred} reduces the problem of proving Theorem~\ref{t: height} to a question about almost simple groups. This question is addressed in \S\ref{s: as H} where the proof of Theorem~\ref{t: height} is completed.
 

\subsection{Acknowledgments}

The results in this paper are based on the PhD thesis of the second author. The authors would like to thank Colva Roney-Dougal and Dugald Macpherson who, in the course of examining the thesis, made many useful remarks which have helped to improve the current paper.

The first and third authors would like to acknowledge the support of EPSRC grant EP/R028702/1 over the course of this research.

\section{Some background lemmas}\label{s: lemmas}
 
In this section we collect together a number of lemmas, mostly about height, which will be useful later. We also introduce one final statistic: for a group $G$, we define $\ell(G)$ to be the length of the longest subgroup chain in $G$. It is clear that $\ell(G)$ is greater than or equal to $\Irred(G)$ (and, hence, all of the other statistics defined in \S\ref{s: intro}); it is equally clear that $\ell(G)\leq \log |G|$.
 
We start with a proof of \eqref{eq: inequality 2}

\begin{lem}\label{l: rc and h}
 $\RC(G) \leq \Height(G) + 1$.
\end{lem}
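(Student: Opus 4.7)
Set $h = \Height(G)$; the plan is to show that any $(h+1)$-equivalent tuples are $n$-equivalent, which is exactly the assertion $\RC(G) \leq h+1$. So I would take $I, J \in \Omega^n$ with $n \geq h+1$ and $I \stb{h+1} J$, and aim to construct a single element $g \in G$ with $g(I_k) = J_k$ for every $k \in \{1,\ldots,n\}$.

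The main reduction is to shrink the "support" of $I$ to an independent set. Let $\Lambda = \{I_1, \ldots, I_n\} \subseteq \Omega$ be the set of distinct entries of $I$ and prune it greedily: as long as the current set contains an element whose removal does not change the pointwise stabilizer, remove such an element. This terminates in an independent subset $\Lambda' \subseteq \Lambda$ satisfying $G_{(\Lambda')} = G_{(\Lambda)}$, and the height hypothesis forces $|\Lambda'| \leq h$. Writing $\Lambda' = \{I_{k_1}, \ldots, I_{k_m}\}$ with $m \leq h$, the key observation is that $G_{(\Lambda')}$ already fixes \emph{every} value appearing in $I$.

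Using $(h+1)$-equivalence (which, since the definition permits repeated indices, implies equivalence on every subtuple of length at most $h+1$), I would extract two pieces of data: first, some $g \in G$ sending $(I_{k_1}, \ldots, I_{k_m})$ to $(J_{k_1}, \ldots, J_{k_m})$; and second, for each $k_0 \in \{1, \ldots, n\}$, some $g_{k_0} \in G$ sending $(I_{k_1}, \ldots, I_{k_m}, I_{k_0})$ to $(J_{k_1}, \ldots, J_{k_m}, J_{k_0})$. Then $g^{-1} g_{k_0}$ fixes each $I_{k_j}$, so it lies in $G_{(\Lambda')} = G_{(\Lambda)}$; since $I_{k_0} \in \Lambda$, this element fixes $I_{k_0}$ as well. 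Consequently $g(I_{k_0}) = g_{k_0}(I_{k_0}) = J_{k_0}$ for every $k_0$, and the single element $g$ witnesses $I \stb{n} J$.

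There is no deep obstacle here: once one spots that the support of $I$ admits an independent "core" $\Lambda'$ of size at most $h$ with the same pointwise stabilizer, the proof is a short coset-chase using $G_{(\Lambda')}$. The only items that need a line of care are the edge case $m = 0$ (then $G_{(\Lambda)} = G$, which forces $I = J$ and allows $g = 1$) and the routine check that $I \stb{h+1} J$ transfers to subtuples of length at most $h+1$.
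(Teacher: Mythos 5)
Your proof is correct and follows essentially the same route as the paper: the paper also extracts a "core" of at most $h$ entries whose pointwise stabilizer equals that of the full support (there phrased as "reorder so that the stabilizer chain strictly decreases for $\ell\le h$ steps and then stabilizes"), and then runs the identical coset argument with $g$ and the auxiliary elements $g_j$. Your explicit appeal to the pruning step (Lemma~\ref{l: subset}) makes the bound $\ell\le h$ transparent, which the paper leaves slightly implicit; otherwise the two arguments coincide.
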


\begin{proof} 
Let $h = \Height(G)$ and consider a pair $(I,J)\in \Omega^n$ such that $I\stb{r} J$ with $r=h+1$. We must show that $I\stb{n} J$.

Observe that we can reorder the tuples without affecting their equivalence. Hence, without loss of generality, we can assume that
\[ G_{I_1} > G_{I_1,I_2}> \dots > G_{I_1, I_2, \dots, I_{\ell}}
\] for some $\ell \leq h$ and then this chain stabilizes, i.e. 
\[ G_{I_1, \dots, I_{\ell}} = G_{I_1, \dots, I_{\ell+j}}
\] for all $1 \leq j \leq n-\ell$.
From the assumption of ${(h+1)}$-equivalence it follows that there exists an element $g \in G$ such that $I_i ^g = J_i$ for all $1 \leq i \leq \ell$ and observe that the set of all such elements $g$ forms a coset of $G_{I_1, \dots, I_{\ell}}$. 

The assumption of ${(h+1)}$-equivalence implies, moreover, that for all $1 \leq j \leq n-\ell$ there exists $g_j \in G$ such that 
\[ \begin{cases}
I_i ^{g_j} = J_i, \quad \text{for} \quad 1\leq i \leq \ell; \\
I_{\ell+j}^{g_j} = J_{\ell+j}.
\end{cases}
\]
The set of all such elements $g_j$ forms a coset of $G_{I_1, \dots, I_\ell, I_{\ell+j}}$, which is, again, a coset of $G_{I_1, \dots, I_\ell}$. Indeed, since any coset of $G_{I_1,\dots, I_\ell}$ is defined by the image of the points $I_1,\dots, I_\ell$ under an element of the coset, we conclude that elements of the same coset of $G_{I_1, \dots, I_\ell}$ map $I_{\ell+j}$ to $J_{\ell+j}$ for all $1 \leq j \leq n-\ell$. In particular, $I\stb{n} J$, as required.
\end{proof}

The next two lemmas are little more than observations and require no proof.

\begin{lem}\label{l: subgroup}
Let $H \leq G$ and let $\Lambda \subseteq \Omega$ be an independent set with respect to $H$. Then $\Lambda$ is an independent set with respect to $G$. In particular, $\Height(H)\le \Height(G)$.
\end{lem}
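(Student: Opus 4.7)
The plan is to verify the contrapositive of the defining condition for independence, working one point at a time. Recall that $\Lambda=\{\omega_1,\dots,\omega_k\}$ is independent with respect to a group $K$ acting on $\Omega$ precisely when, for every $i\in\{1,\dots,k\}$, the pointwise stabilizer $K_{(\Lambda)}$ is strictly smaller than $K_{(\Lambda\setminus\{\omega_i\})}$. So I need to show that if this strictness holds for $K=H$, then it also holds for $K=G$.

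First I would suppose, for contradiction, that $\Lambda$ fails to be independent with respect to $G$; that is, there exists some index $i$ with $G_{(\Lambda)}=G_{(\Lambda\setminus\{\omega_i\})}$. Then I would intersect both sides with $H$ and use the elementary identity $H_{(\Delta)}=H\cap G_{(\Delta)}$, valid for any $\Delta\subseteq\Omega$. This immediately yields
\[
H_{(\Lambda)} \;=\; H\cap G_{(\Lambda)} \;=\; H\cap G_{(\Lambda\setminus\{\omega_i\})} \;=\; H_{(\Lambda\setminus\{\omega_i\})},
\]
contradicting the assumption that $\Lambda$ is independent with respect to $H$. Hence no such $i$ exists, and $\Lambda$ is independent with respect to $G$, as claimed.

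For the ``in particular'' part, I would simply pick an independent set $\Lambda$ for $H$ of maximum size $\Height(H)$; by the first part, $\Lambda$ is an independent set for $G$ of the same size, so $\Height(G)\geq |\Lambda|=\Height(H)$.

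There is no real obstacle here: the entire argument is driven by the single observation that pointwise stabilizers in $H$ are obtained from pointwise stabilizers in $G$ by intersection with $H$, which preserves proper containment. This matches the authors' remark that the statement is ``little more than an observation''.
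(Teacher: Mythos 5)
Your argument is correct and is exactly the observation the authors had in mind: the paper omits a proof entirely (describing this lemma as ``little more than an observation''), and your contrapositive argument via $H_{(\Delta)} = H\cap G_{(\Delta)}$ is the natural formalization of it.
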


\begin{lem}\label{l: stab}
 Let $\Lambda= \{\alpha_1, \dots, \alpha_n\}$. Then $\Lambda$ is independent if and only if $$G_{(\Lambda)} \lneqq G_{(\Lambda \setminus \{\alpha_i\})}$$ for all $i=1, \dots, n$.
\end{lem}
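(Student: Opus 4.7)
The plan is to deduce the equivalence directly from the definition of an independent set, using only the elementary monotonicity principle that for any $\Lambda' \subseteq \Lambda'' \subseteq \Omega$ the containment $G_{(\Lambda'')} \leq G_{(\Lambda')}$ is automatic. With this in hand, each occurrence of inequality in the lemma is equivalent to strict containment, and the argument reduces to unpacking the quantifiers in the definition.

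For the forward direction, I will assume $\Lambda$ is independent and specialise the definition to the proper subset $\Lambda \setminus \{\alpha_i\}$, which gives $G_{(\Lambda)} \neq G_{(\Lambda \setminus \{\alpha_i\})}$ for each $i$; combined with the automatic inclusion $G_{(\Lambda)} \leq G_{(\Lambda \setminus \{\alpha_i\})}$, this yields the required strict containment.

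For the reverse direction, given the strict containments in the hypothesis, I will take an arbitrary proper subset $\Lambda' \subsetneq \Lambda$ and pick any $\alpha_i \in \Lambda \setminus \Lambda'$, so that $\Lambda' \subseteq \Lambda \setminus \{\alpha_i\}$. Monotonicity then gives $G_{(\Lambda \setminus \{\alpha_i\})} \leq G_{(\Lambda')}$, and chaining this with $G_{(\Lambda)} \lneqq G_{(\Lambda \setminus \{\alpha_i\})}$ forces $G_{(\Lambda)} \neq G_{(\Lambda')}$, verifying independence.

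There is no real obstacle here; the lemma is essentially a tautological reformulation of the definition (which is why the authors describe it as ``little more than an observation''). The only point that requires any care is the choice of an element $\alpha_i$ outside $\Lambda'$ in the reverse direction, which is guaranteed precisely because $\Lambda'$ is a \emph{proper} subset of $\Lambda$.
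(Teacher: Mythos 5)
Your argument is correct and is precisely the routine unpacking of the definition that the authors have in mind when they say the lemma "requires no proof"; both directions rely only on the automatic monotonicity of pointwise stabilizers under set inclusion, and the reverse direction correctly uses properness of the subset to extract an omitted $\alpha_i$.
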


\begin{lem}\label{l: subset}
Let $\Lambda \subseteq \Omega$. Then there exists $\Gamma \subseteq \Lambda$ such that $\Gamma$ is independent and $G_{(\Lambda)}= G_{(\Gamma)}$.
\end{lem}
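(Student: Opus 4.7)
The plan is to prove this by a straightforward greedy deletion argument: starting from $\Lambda$, repeatedly remove redundant points until none remain, and verify that the resulting subset is independent.

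More precisely, I would define a sequence $\Lambda = \Lambda_0 \supseteq \Lambda_1 \supseteq \Lambda_2 \supseteq \cdots$ as follows. Having defined $\Lambda_i$, if there exists some $\alpha \in \Lambda_i$ with $G_{(\Lambda_i)} = G_{(\Lambda_i \setminus \{\alpha\})}$, then set $\Lambda_{i+1} := \Lambda_i \setminus \{\alpha\}$; otherwise halt and set $\Gamma := \Lambda_i$. Since $\Omega$ is finite and $|\Lambda_{i+1}| = |\Lambda_i| - 1$ at each non-terminal step, the process terminates in at most $|\Lambda|$ steps.

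By induction on $i$ we have $G_{(\Lambda_i)} = G_{(\Lambda)}$ for every $i$, and in particular $G_{(\Gamma)} = G_{(\Lambda)}$ as required. It remains to check that $\Gamma$ is independent. By the stopping condition, for every $\alpha \in \Gamma$ we have $G_{(\Gamma)} \neq G_{(\Gamma \setminus \{\alpha\})}$; combined with the trivial inclusion $G_{(\Gamma)} \leq G_{(\Gamma \setminus \{\alpha\})}$ (adding a fixed point can only shrink the stabilizer), this gives $G_{(\Gamma)} \lneq G_{(\Gamma \setminus \{\alpha\})}$ for every $\alpha \in \Gamma$. Independence of $\Gamma$ then follows immediately from Lemma~\ref{l: stab}.

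There is no real obstacle here: the statement is a routine closure-type observation whose sole purpose is to allow later arguments to pass from an arbitrary subset to a canonical independent subset with the same pointwise stabilizer. The only minor point worth flagging is the need to invoke finiteness of $\Omega$ (hence of $\Lambda$) to guarantee termination of the deletion process, and this is already in force from the standing hypotheses of the paper.
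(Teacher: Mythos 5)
Your proof is correct and follows essentially the same iterative deletion argument as the paper; the only cosmetic difference is that you remove one point at a time (and invoke Lemma~\ref{l: stab} to conclude independence at termination), whereas the paper passes directly to a proper subset with the same stabilizer and appeals to the definition of independence. Both yield the result by finiteness in the same way.
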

\begin{proof} If $\Lambda$ is an independent set then the claim is trivially true. So assume it is not. Then there exists a proper subset $\Lambda_0 \subset \Lambda$ such that $G_{(\Lambda_0)}= G_{(\Lambda)}$. If $\Lambda_0$ is independent then we are done, otherwise consider $\Lambda_0$ and repeat the same argument.
\end{proof}

The next three results give information about the behaviour of the statistics $\Height(G)$, $\Irred(G)$ and $\ell(G)$ when the permutation group $G$ is a direct product.

\begin{lem}\label{l: viva}
Let $A$ and $B$ permutation groups on $\Omega_1$ and $\Omega_2$, then $$\Height(A\times B,\Omega_1\times \Omega_2)\le \Height(A,\Omega_1)+\Height(B,\Omega_2).$$
\end{lem}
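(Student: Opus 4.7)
The plan is to take an independent set for $A \times B$ in the product action, project to the two factors, and show that one can extract from these projections a pair of independent sets for $A$ and $B$ whose sizes sum to at least the size of the original set.

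Write $\Lambda = \{(\alpha_1,\beta_1), \dots, (\alpha_n,\beta_n)\}$, so that $\Lambda$ is an independent set of size $n$ for $A\times B$ acting on $\Omega_1\times\Omega_2$. Since the $A\times B$-action is the product action, the pointwise stabilizer factors as
\[
(A\times B)_{(\Lambda)} = A_{(\pi_1(\Lambda))}\times B_{(\pi_2(\Lambda))},
\]
where $\pi_1$ and $\pi_2$ denote projection to $\Omega_1$ and $\Omega_2$ respectively, and where projections are taken as sets (not multisets). By Lemma~\ref{l: stab}, independence of $\Lambda$ says that removing any $(\alpha_i,\beta_i)$ strictly enlarges the stabilizer. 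Since the stabilizer is a direct product, for each $i\in\{1,\dots,n\}$ at least one of the following holds:
\begin{itemize}
\item[(a)] $A_{(\pi_1(\Lambda\setminus\{(\alpha_i,\beta_i)\}))} \supsetneq A_{(\pi_1(\Lambda))}$;
\item[(b)] $B_{(\pi_2(\Lambda\setminus\{(\alpha_i,\beta_i)\}))} \supsetneq B_{(\pi_2(\Lambda))}$.
\end{itemize}

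Let $I_A \subseteq \{1,\dots,n\}$ be the set of indices for which (a) holds, and $I_B$ the set for which (b) holds; by the above, $I_A\cup I_B=\{1,\dots,n\}$, hence $n\le |I_A|+|I_B|$. The key observation is that (a) forces $\alpha_i$ to appear as the first coordinate of exactly one pair of $\Lambda$ (otherwise $\pi_1(\Lambda\setminus\{(\alpha_i,\beta_i)\})=\pi_1(\Lambda)$ as a set). Consequently the map $i\mapsto \alpha_i$ is injective on $I_A$, so $\Gamma_A:=\{\alpha_i:i\in I_A\}$ has cardinality $|I_A|$; and for each $i\in I_A$ the strict inclusion in (a) provides an element of $A_{(\Gamma_A\setminus\{\alpha_i\})}\setminus A_{(\Gamma_A)}$, since $\Gamma_A\setminus\{\alpha_i\}\subseteq \pi_1(\Lambda)\setminus\{\alpha_i\}$. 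By Lemma~\ref{l: stab} this shows that $\Gamma_A$ is independent for $A$ on $\Omega_1$, so $|I_A|\le\Height(A,\Omega_1)$. The symmetric argument for $I_B$ yields $|I_B|\le \Height(B,\Omega_2)$, and combining gives $n\le\Height(A,\Omega_1)+\Height(B,\Omega_2)$, as required.

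The only delicate point is the bookkeeping around repeated coordinates: a pair $(\alpha_i,\beta_i)$ contributes to $\Gamma_A$ only when its first coordinate is genuinely "used up" by removal, and we need to check that this subtlety is exactly what condition (a) records. Once this is handled, the proof is a clean direct-product argument.
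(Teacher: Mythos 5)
Your proof is correct, and it takes a genuinely different route from the paper's. The paper first applies Lemma~\ref{l: subset} to the $\Omega_1$-projections to carve out a maximal independent subset $\Delta_1$ of $\{\alpha_1,\dots,\alpha_k\}$ with $A_{(\Delta_1)}=A_{(\pi_1(\Lambda))}$, relabels so that $\Delta_1=\{\alpha_1,\dots,\alpha_s\}$, and then, passing to the subgroup $H=G_s$, shows via a comparison of $\pi_A$-images and intersections with the kernel $\{1\}\times B$ that $\{\beta_{s+1},\dots,\beta_k\}$ is an independent set for $B\cap H$. Your argument is symmetric from the outset: you use the explicit factorization $(A\times B)_{(\Lambda)}=A_{(\pi_1(\Lambda))}\times B_{(\pi_2(\Lambda))}$ (and similarly for $\Lambda\setminus\{(\alpha_i,\beta_i)\}$), observe that removing any pair must strictly shrink at least one factor, and then read off independent sets for $A$ and $B$ directly from the two index classes. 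The bookkeeping point you flag is handled correctly: if $\alpha_i$ appeared twice in $\pi_1(\Lambda)$ then $\pi_1(\Lambda\setminus\{(\alpha_i,\beta_i)\})=\pi_1(\Lambda)$ as sets and condition (a) would fail, so $i\mapsto\alpha_i$ is injective on $I_A$; and the witness $g\in A_{(\pi_1(\Lambda)\setminus\{\alpha_i\})}\setminus A_{(\pi_1(\Lambda))}$ moves $\alpha_i$ while fixing the smaller set $\Gamma_A\setminus\{\alpha_i\}\subseteq\pi_1(\Lambda)\setminus\{\alpha_i\}$, which is exactly what Lemma~\ref{l: stab} needs. Your version avoids both Lemma~\ref{l: subset} and the kernel comparison, and is arguably the more transparent argument; the paper's version is structurally parallel to its proof of the more general Lemma~\ref{l: chain}, which is presumably why that style was chosen there.
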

\begin{proof}
Let $\pi_A:A\times B\to A$ be the projection onto $A$ and observe that the kernel of $\pi_A$ is $\{1\}\times B$. Let $\Lambda=\{(\alpha_1,\beta_1),\ldots,(\alpha_k,\beta_k)\}$ be an independent set for $A\times B$. Set $G:=A\times B$ and, for every $j\in \{1,\ldots,k\}$, set $G_j:=G_{(\alpha_1,\beta_1)}\cap\cdots\cap G_{(\alpha_j,\beta_j)}$. 

Now consider the set $\Lambda_1 = \{\alpha_1, \dots, \alpha_k\} \subseteq \Omega_1$. Observe that, \textit{a priori}, this set may not be independent. However, by Lemma \ref{l: subset}, there exists a subset $\{j_1, \dots, j_s\} \subseteq \{1, \dots, k\}$ such that the set $\Delta_1 = \{\alpha_{j_1}, \dots, \alpha_{j_s} \}$ is independent and $A_{(\Delta_1)}= A_{(\Lambda_1)}$. After relabeling elements, we can assume $\Delta_1= \{\alpha_1, \dots, \alpha_s\}$ and observe that $A_{(\Delta_1)}= \pi_A(G_s)$. In particular, $s \leq \Height(A,\Omega_1)$. 

Now let $H := G_s$. It is clear that the set 
$\{(\alpha_{s+1},\beta_{s+1}),\ldots,(\alpha_k,\beta_k)\}$ is independent with respect to $H$. Moreover, for all $j \in \left\{s+1, \dots, k\right\}$, we have 

$$ \pi_A \left(\bigcap_{i=s+1}^{k}{H_{(\alpha_i,\beta_i)}}\right)=\pi_A\left(\bigcap_{\substack{i=s+1 \\ i \neq j}}^{k}{H_{(\alpha_i,\beta_i)}}\right) $$ 

and this implies that $$\bigcap_{i=s+1}^{k}{H_{(\alpha_i,\beta_i)}} \cap (\{1\}\times B) \lneqq \left(\bigcap_{\substack{i=s+1 \\ i \neq j}}^{k}{H_{(\alpha_i,\beta_i)}} \right)\cap (\{1\}\times B).$$ 

Therefore, by Lemma~\ref{l: stab}, the set $\Delta_2=\{\beta_{s+1}, \dots, \beta_k\}$ is an independent set for $B\cap H$ and hence $k-s \leq \Height(B \cap H,\Omega_2)$. As $\Height(B \cap H,\Omega_2) \leq \Height(B,\Omega_2)$, by Lemma~\ref{l: subgroup}, we have $$k \leq \Height(A, \Omega_1)+ \Height(B, \Omega_2)$$ and we conclude that $\Height(G, \Omega_1 \times \Omega_2) \leq \Height(A, \Omega_1)+ \Height(B, \Omega_2).$
\end{proof}

\begin{lem}\label{l: viva1}
Let $A$ and $B$ be non-identity permutation groups on $\Omega_1$ and $\Omega_2$, then $$\Irred(A\times B,\Omega_1\times \Omega_2)= \Irred(A,\Omega_1)+\Irred(B,\Omega_2)-1.$$
\end{lem}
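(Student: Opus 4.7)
The plan is to prove the lower bound $\Irred(A\times B) \geq \Irred(A)+\Irred(B)-1$ by an explicit construction, and the upper bound $\Irred(A\times B) \leq \Irred(A)+\Irred(B)-1$ by projection together with an inclusion--exclusion argument. The key structural fact used throughout is that, in the product action on $\Omega_1\times \Omega_2$, pointwise stabilizers decompose as direct products: the pointwise stabilizer of any set $\{(\alpha_1,\beta_1),\dots,(\alpha_k,\beta_k)\}$ in $A\times B$ equals $A_{\alpha_1,\dots,\alpha_k}\times B_{\beta_1,\dots,\beta_k}$.

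For the lower bound, I let $p=\Irred(A)$ and $q=\Irred(B)$, fix irredundant bases $[\alpha_1,\dots,\alpha_p]$ for $A$ and $[\beta_1,\dots,\beta_q]$ for $B$, and consider the ``L-shaped'' sequence
\[
[(\alpha_1,\beta_1),(\alpha_2,\beta_1),\dots,(\alpha_p,\beta_1),(\alpha_p,\beta_2),(\alpha_p,\beta_3),\dots,(\alpha_p,\beta_q)],
\]
which has length $p+q-1$. Its stabilizer chain is $A_{\alpha_1,\dots,\alpha_i}\times B_{\beta_1}$ for $i=1,\dots,p$, which strictly decreases by irredundance of the $A$-base, followed by $\{1\}\times B_{\beta_1,\dots,\beta_j}$ for $j=2,\dots,q$, which strictly decreases by irredundance of the $B$-base, and terminates at $\{1\}$. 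Hence the sequence is an irredundant base of $A\times B$, and the lower bound follows.

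For the upper bound, I take an irredundant base $[(\alpha_1,\beta_1),\dots,(\alpha_n,\beta_n)]$ of $A\times B$ of maximal length $n=\Irred(A\times B)$ and partition the indices according to which factor's stabilizer strictly decreases at each step:
\[
S=\{i:A_{\alpha_1,\dots,\alpha_{i-1}}>A_{\alpha_1,\dots,\alpha_i}\},\qquad T=\{i:B_{\beta_1,\dots,\beta_{i-1}}>B_{\beta_1,\dots,\beta_i}\}.
\]
Irredundance of the chain forces $S\cup T=\{1,\dots,n\}$. Now the key observation is that dropping the indices $j\notin S$ (whose corresponding $\alpha_j$ is already fixed by the $A$-stabilizer built so far, and hence redundant in the $A$-chain) leaves a strictly descending chain of $A$-stabilizers ending at $A_{\alpha_1,\dots,\alpha_n}=\{1\}$; concretely the subsequence $[\alpha_i:i\in S]$ is an irredundant base of $A$, so $|S|\leq \Irred(A)$, and by the symmetric argument $|T|\leq \Irred(B)$. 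Inclusion--exclusion then yields
\[
n=|S|+|T|-|S\cap T|\leq \Irred(A)+\Irred(B)-|S\cap T|.
\]

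The main obstacle --- and the decisive step --- is to verify that $|S\cap T|\geq 1$. At step $i=1$ the stabilizer becomes $A_{\alpha_1}\times B_{\beta_1}$, and one needs both $A_{\alpha_1}<A$ and $B_{\beta_1}<B$ simultaneously in order to conclude $1\in S\cap T$. This is the assertion that neither $\alpha_1$ is a fixed point of $A$ nor $\beta_1$ is a fixed point of $B$, which is automatic whenever $A$ and $B$ are transitive (the setting in which this lemma is used in the sequel, via the O'Nan--Scott decomposition of primitive wreath products). With that in hand the two inequalities combine to give the claimed equality.
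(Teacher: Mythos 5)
Your approach is essentially the paper's: the same L-shaped sequence for the lower bound (you walk $A$ then $B$; the paper walks $B$ then $A$), and the same index-partition for the upper bound (the paper's $\mathcal{S}_A,\mathcal{S}_B$ are your $S,T$ minus the index $1$, with the ``$-1$'' absorbed by prepending $\alpha_1$, $\beta_1$ to the extracted subsequences).

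The concern you flag at the end is, however, more serious than you make it sound: the lemma is actually \emph{false} as stated. Take $A=\langle(2\,3)\rangle\le\Sym(\{1,2,3\})$ (which fixes $1$) and $B=\Sym(\{1,2\})$. Then $\Irred(A)=\Irred(B)=1$, yet $[(1,1),(2,1)]$ is an irredundant base of $A\times B$: the chain is $A\times B>A_1\times B_1=A\times 1>A_{1,2}\times B_1=1$, giving $\Irred(A\times B)\ge 2>1=\Irred(A)+\Irred(B)-1$. In your notation this is precisely $S=\{2\}$, $T=\{1\}$, $S\cap T=\varnothing$, so the step $|S\cap T|\ge 1$ is exactly where the proof collapses. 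The paper's proof has the identical gap in disguise: it asserts $[\alpha_1,\alpha_{i_1},\dots,\alpha_{i_r}]$ is irredundant, which fails when $A=A_{\alpha_1}$. So you have spotted a real bug in the statement. As you observe, the hypothesis ``non-identity'' should be strengthened to ``without fixed points'' (transitivity is more than enough), and this holds in every application of the lemma in the paper, where $A$ and $B$ are powers of a primitive group. One small refinement: you do not actually need $1\in S\cap T$ specifically, only $S\cap T\neq\varnothing$; but since some one-element explanation is needed anyway, your observation is the right one, and you should state the corrected hypothesis rather than leaving it as an aside.
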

\begin{proof}
Let $[\alpha_1,\ldots,\alpha_k]$ and $[\beta_1,\ldots,\beta_s]$ be irredundant bases for $A$ and $B$ acting on $\Omega_1$ and on $\Omega_2$ and with $k:=\Irred(A,\Omega_1)$ and $s:=\Irred(B,\Omega_2)$. As $A$ and $B$ are not the identity, $s,k\ge 1$. Now consider the ordered sequence
$$[(\alpha_1,\beta_1),(\alpha_1,\beta_2),\ldots,(\alpha_1,\beta_s),(\alpha_2,\beta_s),(\alpha_3,\beta_s),\ldots,(\alpha_k,\beta_s)].$$
Using the fact that $[\alpha_1,\ldots,\alpha_k]$ and $[\beta_1,\ldots,\beta_s]$ are irredundant bases, it follows that this sequence of elements of $\Omega_1\times\Omega_2$ is an irredundant base for $A\times B$ acting on $\Omega_1\times \Omega_2$. Thus
$$\Irred(A\times B,\Omega_1\times\Omega_2)\ge s+k-1=\Irred(A,\Omega_1)+\Irred(B,\Omega_2)-1.$$

Now, let $[(\alpha_1,\beta_1),\ldots,(\alpha_t,\beta_t)]$ be an irredundant base for $A\times B$ acting on $\Omega_1\times \Omega_2$ with $t=\Irred(A\times B,\Omega_1\times \Omega_2)$. Let $\mathcal{S}_A:=\{i\in \{2,\ldots,t\}\mid A_{\alpha_1,\ldots,\alpha_{i-1}}>A_{\alpha_1,\ldots,\alpha_{i-1},\alpha_{i}}\}$ and
$\mathcal{S}_B:=\{i\in \{2,\ldots,t\}\mid B_{\beta_1,\ldots,\beta_{i-1}}>B_{\beta_1,\ldots,\beta_{i-1},\beta_{i}}\}.$ 
As, for each $i\in \{2,\ldots,t\}$, we have 
$$
A_{\alpha_1,\ldots,\alpha_{i-1}}
\times 
B_{\beta_1,\ldots,\beta_{i-1}}=
\bigcap_{j=1}^{i-1}(A\times B)_{(\alpha_j,\beta_j)}>\bigcap_{j=1}^{i}(A\times B)_{(\alpha_j,\beta_j)}=A_{\alpha_1,\ldots,\alpha_{i}}
\times 
B_{\beta_1,\ldots,\beta_{i}},$$
we deduce $\mathcal{S}_A\cup\mathcal{S}_B=\{2,\ldots,t\}$.
Moreover, if $\mathcal{S}_A=\{i_1,i_2,\ldots,i_r\}$, then $[\alpha_1,\alpha_{i_1},\alpha_{i_2},\ldots,\alpha_{i_r}]$ is an irredundant base for $A$ acting on $\Omega_1$ and hence $\Irred(A,\Omega_1)\ge |\mathcal{S}_A|+1$. Similarly, $\Irred(B,\Omega_2)\ge |\mathcal{S}_B|+1$. Therefore,
\begin{align*}
\Irred(A,\Omega_1)+\Irred(B,\Omega_2)&\ge |\mathcal{S}_A|+|\mathcal{S}_B|+2\ge|\mathcal{S}_{A}\cup\mathcal{S}_B|+2=|\{2,\ldots,t\}|+2=t+1\\
&=\Irred(A\times B,\Omega_1\times\Omega_2)+1.
\end{align*}
\end{proof}

\begin{lem}\label{l: l dp}
 Let $A$ and $B$ be groups. Then $\ell(A\times B)\leq \ell(A)+\ell(B)$.
\end{lem}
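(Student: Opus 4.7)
The plan is to take a longest subgroup chain in $A\times B$ and, from it, extract (possibly-repetitive) chains in $A$ and in $B$ whose strict increases together account for every step of the original chain. Explicitly, let $n=\ell(A\times B)$ and fix a chain
\[
1=H_0<H_1<\cdots<H_n=A\times B.
\]
Identifying $A$ with $A\times\{1\}$ (normal in $A\times B$) and letting $\pi_B:A\times B\to B$ denote the projection, define
\[
A_i:=H_i\cap A,\qquad B_i:=\pi_B(H_i)\qquad (0\le i\le n).
\]
These yield ascending chains $1=A_0\le A_1\le\cdots\le A_n=A$ and $1=B_0\le B_1\le\cdots\le B_n=B$.

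The key step is to show that for each $i\in\{0,\ldots,n-1\}$ at least one of the inclusions $A_i\le A_{i+1}$, $B_i\le B_{i+1}$ is strict. I would argue the contrapositive: if $A_i=A_{i+1}$ and $B_i=B_{i+1}$, then given any $(a,b)\in H_{i+1}$, the equality $b\in B_{i+1}=B_i$ produces some $(a',b)\in H_i$; then $(a,b)(a',b)^{-1}=(aa'^{-1},1)$ lies in $H_{i+1}\cap A=A_{i+1}=A_i\le H_i$, so $(a,b)\in H_i$. This forces $H_{i+1}\subseteq H_i$, contradicting strictness of the original chain.

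To finish, let $s_A$ denote the number of indices $i$ at which $A_i<A_{i+1}$, and define $s_B$ analogously. By the key step, $s_A+s_B\ge n$. Deleting repeated entries from the $A_i$-chain produces a strict subgroup chain in $A$ of length $s_A$, so $s_A\le\ell(A)$; similarly $s_B\le\ell(B)$. Combining gives $n\le \ell(A)+\ell(B)$, as desired. There is no substantial obstacle here; the only subtlety is the verification in the key step, and that requires nothing beyond the normality of $A\times\{1\}$ in $A\times B$ and the definition of $\pi_B$.
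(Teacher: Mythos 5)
Your proof is correct and takes essentially the same approach as the paper: both proofs track the chain via the projection to $B$ and the intersection with $A$, with the key observation being that if both stall at a step then the chain itself must stall. Your phrasing (bounding $s_A+s_B\ge n$ directly) is slightly cleaner than the paper's version, which first identifies the drop-indices in the $B$-chain and then argues segmentwise via the first isomorphism theorem, but the underlying idea is the same.
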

\begin{proof}
Let $G=A\times B$ and let
$$G_\kappa<G_{\kappa-1}<\cdots<G_1<G_0=G$$
be a chain of subgroups of length $\kappa$. Let $\pi:G\to B$ be the projection onto $B$. Let $$\mathcal{I}:=\{i\in\{0,\ldots,\kappa-1\}\mid \pi(G_i)>\pi(G_{i+1})\}$$
and let $s:=|\mathcal{I}|$. We may write $\mathcal{I}=\{i_1,\ldots,i_s\}$ with $i_1<i_2<\cdots<i_s$. Now,
$$
\pi(G_{i_s+1})<\pi(G_{i_s})<\cdots<\pi(G_{i_2})<\pi(G_{i_1})\le B$$
is a strictly increasing chain of subgroups of $B$ and hence $s\leq \ell(B)$.

Let $j$ be an arbitrary element in $\{1,\ldots,s-1\}$. Define $i_{s+1}=\kappa$. By definition
\begin{equation}\label{eq:eqeqeq3a}\pi(G_{i_{j+1}})=\cdots=\pi(G_{i_j+3})=\pi(G_{i_j+2})=\pi(G_{i_j+1}).\end{equation} 
Since, by assumption, we have the strictly increasing sequence
$$G_{i_{j+1}}<\cdots<G_{i_j+3}<G_{i_j+2}<G_{i_j+1},$$
the first isomorphism theorem and~\eqref{eq:eqeqeq3a} give 
$$A\cap G_{i_{j+1}}<\cdots<A\cap G_{i_j+3}<A\cap G_{i_j+2}<A\cap G_{i_j+1}.$$
This increasing sequence consists of $i_{j+1}-i_j$ subgroups.

The argument in the previous paragraph can be applied for every $j\in \{1,\ldots,s-1\}$ and hence we obtain $s$ chains of strictly increasing sequences of subgroups of $A$ consisting of $i_1-i_2,i_2-i_3,\ldots,i_{s-1}-i_s$ terms. By sticking these strictly increasing sequences together we obtain a longer increasing sequence of subgroups of $A$ of length $\kappa$. This longer increasing sequence is not necessarily strictly increasing, however the only positions where an equality can occur are the positions where we attach two strictly increasing chains, that is, in the positions
$$A\cap G_{i_{j+1}}\le A\cap G_{i_{j+1}-1}.$$
Since the number of these positions is $s$ and since  $s\le \ell(B)$, we have a strictly increasing chain in $A$ of length $\kappa-\ell(B)$. We conclude that $\kappa-\ell(B)\leq \ell(A)$, and the result follows.
\end{proof}

\begin{lem}\label{l: chain}
Let $G \leq \Sym(\Omega)$ and let $N \unlhd G$. Then
\[ \Height(G) \leq \Height(N) + \ell(G / N)
\] 
and
\[\Irred(G)\le \Irred(N)+\ell(G/N).\]
\end{lem}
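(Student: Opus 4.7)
My plan is to prove both inequalities by a common partitioning idea: decompose a witnessing configuration in $G$ into an ``$N$-part'' and a ``$G/N$-part''. The two bounds require slightly different versions of this decomposition.

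For the bound $\Irred(G) \leq \Irred(N) + \ell(G/N)$, let $[\omega_1,\ldots,\omega_k]$ be an irredundant base for $G$ with $k = \Irred(G)$, and set $H_i := G_{\omega_1,\ldots,\omega_i}$ and $M_i := N \cap H_i$. Let $\mathcal{S}_Q := \{i\mid H_{i-1}N \gneqq H_iN\}$ and $\mathcal{S}_N := \{i\mid M_{i-1} \gneqq M_i\}$. Since strict containment $H_{i-1} \gneqq H_i$ together with $M_{i-1}=M_i$ and $H_{i-1}N=H_iN$ would force $|H_{i-1}|=|H_i|$ via $|H_j|=|H_jN|\cdot|M_j|/|N|$, the sets $\mathcal{S}_Q$ and $\mathcal{S}_N$ cover $\{1,\ldots,k\}$. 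The chain of $H_iN/N$'s in $G/N$ immediately gives $|\mathcal{S}_Q|\leq\ell(G/N)$. For $\mathcal{S}_N=\{j_1<\cdots<j_s\}$, I would show by induction that $N_{\omega_{j_1},\ldots,\omega_{j_l}}=M_{j_l}$: indices outside $\mathcal{S}_N$ are absorbed because the current $N$-stabilizer already fixes the corresponding $\omega_i$. Since $M_k=N\cap H_k=1$ and $M_{j_s}=M_k$ (as all indices larger than $j_s$ lie outside $\mathcal{S}_N$), the subsequence $[\omega_{j_1},\ldots,\omega_{j_s}]$ is an irredundant base of $N$, so $s\leq\Irred(N)$ and $k\leq|\mathcal{S}_N|+|\mathcal{S}_Q|\leq\Irred(N)+\ell(G/N)$.

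For the bound $\Height(G)\leq\Height(N)+\ell(G/N)$, the analogous index argument only yields $\Irred(N)$ in place of $\Height(N)$, because the set $\{\omega_j : j\in\mathcal{S}_N\}$ need not be independent for $N$. I circumvent this by starting from an independent set $\Lambda\subseteq\Omega$ for $G$ of maximum size $k=\Height(G)$ and applying Lemma~\ref{l: subset} to the action of $N$: this yields $\Gamma\subseteq\Lambda$ which is independent for $N$ and satisfies $N_{(\Gamma)}=N_{(\Lambda)}$, so $|\Gamma|\leq\Height(N)$. For $\Delta:=\Lambda\setminus\Gamma$, I order its elements $\delta_1,\ldots,\delta_m$ greedily so that each addition strictly shrinks the $G$-stabilizer. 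This greedy step is always available: if at some stage $\Delta'\subsetneq\Delta$ no $\delta\in\Delta\setminus\Delta'$ strictly shrank $G_{(\Gamma\cup\Delta')}$, then $G_{(\Gamma\cup\Delta')}=G_{(\Lambda)}$, and for any such $\delta$ this would yield $G_{(\Lambda\setminus\{\delta\})}=G_{(\Lambda)}$, contradicting the independence of $\Lambda$. Because every $N$-intersection along the resulting $G$-chain equals $N_{(\Lambda)}$, strict inclusions are preserved in $G/N$, so $|\Delta|\leq\ell(G/N)$, giving $|\Lambda|\leq\Height(N)+\ell(G/N)$.

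The main obstacle is the asymmetry between the two statistics. For $\Irred$, one can directly exhibit the relevant sub-base inside $N$ by striking out the redundant positions, but for $\Height$ the analogous subset of $\Lambda$ need not be $N$-independent; the fix is to extract the $N$-independent part first via Lemma~\ref{l: subset} and then harness the residue through a greedy $G/N$-chain argument powered by the $G$-independence of $\Lambda$.
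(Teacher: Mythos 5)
Your proof is correct and follows essentially the same strategy as the paper's: for $\Irred$ you partition the index set into positions where the $N$-part of the stabilizer drops versus where the $G/N$-part drops, and for $\Height$ you first peel off an $N$-independent subset via Lemma~\ref{l: subset} and then convert the residue into a strict chain in $G/N$ using the fact that the $N$-intersections stay constant. The one cosmetic difference is that the paper avoids your greedy reordering of $\Delta$: since the $N$-intersections of the tail stabilizers are all equal, the independence of $\Lambda$ already forces $\pi\left(\bigcap_{j=1}^{i_1} G_{\omega_j}\right)\neq\pi\left(\bigcap_{j=1}^{i_2} G_{\omega_j}\right)$ for any $i_1<i_2$ in the tail, so the $\pi$-chain is strictly decreasing with no reordering needed.
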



\begin{proof}
Let $\{\omega_1,\ldots,\omega_k\}$ be an independent set for $G$ in its action on $\Omega$. 

Let us consider the action of $N$ on $\Omega$. Observe that, \emph{a priori}, the set $\{\omega_1,\ldots,\omega_k\}$ is not independent for this action. However, by Lemma \ref{l: subset}, there exists a subset $J := \{j_1, \dots, j_s\} \subseteq \{1, \dots, k\}$, such that $\{\omega_{j_1}, \dots, \omega_{j_s}\}$ is independent for $N$ on $\Omega$ and  
\[ N \cap \left(\bigcap_{i=1}^{s}{G_{\omega_{j_i}}}\right) = N \cap \left(\bigcap_{i=1}^{k}{G_{\omega_{i}}}\right).
\]
After relabeling elements, we can assume $ J=\{1, \dots, s\}.$ In particular, this yields that
\[ s \leq \Height(N,\Omega).
\]
 Observe that, for all $j \in \{s, \dots, k\}$, we have
\begin{equation}\label{eq:1989}
N \cap \left(\bigcap_{i=1}^{s}{G_{\omega_{i}}}\right)= N \cap \left(\bigcap_{i=1}^{j}{G_{\omega_{i}}}\right)
\end{equation} 



Let $\pi \colon G \to G/N$ be the natural projection and 
let $I= \{s, \dots, k\}$. 
Suppose that there exist $i_1, i_2 \in I$, with $i_1 < i_2$, such that
\[ \pi \left( \bigcap_{j=1}^{i_1}{G_{\omega_j}} \right) = \pi \left( \bigcap_{j=1}^{i_2}{G_{\omega_j}} \right).
\] 
This equality, \eqref{eq:1989} and the first isomorphism theorem imply that
\[ \bigcap_{j=1}^{i_1}{G_{\omega_j}}=\bigcap_{j=1}^{i_2}{G_{\omega_j}}.
\] Therefore
\[\bigcap_{j=1}^{k}{G_{\omega_j}} = \bigcap_{j=1}^{i_2}{G_{\omega_j}} \cap \bigcap_{j=i_2+1}^{k}{G_{\omega_j}}= \bigcap_{j=1}^{i_1}{G_{\omega_j}} \cap \bigcap_{j=i_2+1}^{k}{G_{\omega_j}}.\]
Since $\{1, \dots, i_1\} \cup \{i_2+1, \dots, k\} \subsetneq \{1, \dots, k\}$, we have a contradiction with the independence of $\{\omega_1, \dots, \omega_k\}$ for the action of $G$ on $\Omega$. 
 We conclude that, for $i_1,i_2 \in I=\{s,\dots, k\}$, with $i_1 < i_2$, we have
\[\pi \left( \bigcap_{j=1}^{i_1}{G_{\omega_j}} \right) < \pi \left( \bigcap_{j=1}^{i_2}{G_{\omega_j}} \right).
\]
We conclude that
$\ell(G/N) \geq k-s = \Height(G) - \Height(N)$, as required.

\smallskip

We now prove that $\Irred(G)\le \Irred(N)+\ell(G/N)$. Let $[\omega_1,\ldots,\omega_s]$ be an irredundant base for $G$ with $s:=\Irred(G)$. Now, set $G_0:=G$ and, for $i\in \{1,\ldots,t\}$, $G_i:=\cap_{j=1}^iG_{\omega_j}$. In particular, we have a strictly decreasing chain 
$$G_0>G_1>\cdots>G_{s-1}>G_s=1.$$
Now, let $\mathcal{S}_1:=\{i\in \{0,\ldots,s-1\}\mid G_iN>G_{i+1}N\}$ and $\mathcal{S}_2:=\{i\in \{0,\ldots,s-1\}\mid G_i\cap N>G_{i+1}\cap N\}$. We claim that $\mathcal{S}_1\cup\mathcal{S}_2=\{0,\ldots,s-1\}$. We argue by contradiction and we let $i\in \{0,\ldots,s-1\}\setminus(\mathcal{S}_1\cup\mathcal{S}_2)$. This means that $G_iN=G_{i+1}N$ and $G_i\cap N=G_{i+1}\cap N$. Let $g\in G_i\subseteq G_{i+1}N$. Then, there exist $h\in G_{i+1}$ and $n\in N$ with $g=hn$. Thus 
$h^{-1}g=n\in G_i\cap N=G_{i+1}\cap N$. As $h\in G_{i+1}$, this yields $g\in G_{i+1}$. Therefore $G_{i}=G_{i+1}$, which is clearly a contradiction. 

We have $|\mathcal{S}_1|\le \ell(G/N)$. Now, let $\mathcal{S}_2=\{i_1,i_2,\ldots,i_r\}$ with $i_1<i_2<\ldots<i_r$ and consider the sequence $[\omega_{i_1+1},\omega_{i_2+1},\ldots,\omega_{i_r+1}]$. We claim that this is an irredundant sequence for $N$, from which it follows that $\Irred(N)\ge r=|\mathcal{S}_2|$. Suppose, by contradiction, that
$$N_{\omega_{i_1+1}}\cap \cdots \cap N_{\omega_{i_j+1}}=N_{\omega_{i_1+1}}\cap \cdots \cap N_{\omega_{i_j+1}}\cap N_{\omega_{i_{j+1}+1}}.$$
This gives
$$G_{\omega_{i_1+1}}\cap \cdots \cap G_{\omega_{i_j+1}}\cap N=G_{\omega_{i_1+1}}\cap \cdots \cap G_{\omega_{i_j+1}}\cap G_{\omega_{i_{j+1}+1}}\cap N.$$
We now intersect both sides of this equality with $G_{i_{j+1}}=\bigcap_{u=1}^{i_{j+1}}G_{\omega_u}$ and we obtain
$$G_{i_{j+1}}\cap N=(G_{i_{j+1}}\cap G_{\alpha_{i_{j+1}+1}})\cap N=G_{i_{j+1}+1}\cap N,$$
contradicting the fact that $i_{j+1}\in\mathcal{S}_2$.

Summing up, $$\Irred(N)+\ell(G/N)\ge |\mathcal{S}_1|+|\mathcal{S}_2|\ge |\mathcal{S}_1\cup\mathcal{S}_2|=|\{0,\ldots,s-1\}|=s=\Irred(G).$$
\end{proof}

\section{Groups with a regular normal subgroup}\label{s: regular normal}
In this section we prove Theorem~\ref{t: irred} under the supposition that $G$ contains a regular normal subgroup. In fact here we use a general argument, which also holds in the case of imprimitive groups containing a regular normal subgroup. The primitive groups containing a regular normal subgroup are the groups of type HA, TW, HS or HC in the language of \cite{praeger}.

The main result of this section is the following.

\begin{prop}\label{p: regular normal}
Let $G$ be a permutation group on a finite set $\Omega$ of size $t$. If $G$ contains a regular normal subgroup, then
\[ \Irred(G) \le \log t+1.
\]
\end{prop}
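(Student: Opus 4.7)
The plan is to pick an irredundant base, use the regular normal subgroup $N$ to identify $\Omega$ with $N$, and then translate the pointwise stabilizer chain into a chain of centralizers; this immediately produces a long chain of subgroups of $N$, from which the bound follows from Lagrange.

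Let $N\trianglelefteq G$ be the regular normal subgroup, so that $|N|=t$ and $N\cap G_\omega=1$ for every $\omega\in\Omega$. Take an irredundant base $[\omega_1,\ldots,\omega_s]$ of $G$ realising $s=\Irred(G)$. Transitivity allows us, after translating the whole sequence by a single element of $G$, to assume that $\omega_1=\omega_0$ for a prescribed basepoint $\omega_0\in\Omega$. Identifying $\Omega$ with $N$ via $\omega_0^n\leftrightarrow n$ turns the action of $G_{\omega_0}$ on $\Omega$ into its conjugation action on $N$; write $\omega_i\leftrightarrow n_i$, with $n_1=1$. The crucial step is to check that
$$G_{\omega_0,\omega_i}=C_{G_{\omega_0}}(n_i) \quad \text{for every } i\ge 2.$$
The non-obvious direction is that if $h\in G_{\omega_0}$ fixes $\omega_i=\omega_0^{n_i}$, then $n_ihn_i^{-1}\in G_{\omega_0}$; the commutator $[n_i,h]$ then lies in $N$ by normality and in $G_{\omega_0}$ by construction, so it is trivial by regularity, showing that $h$ and $n_i$ commute.

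Once this identification is in hand, the strictly decreasing stabilizer chain inside $G_{\omega_0}$ becomes
$$G_{\omega_0}>C_{G_{\omega_0}}(n_2)>C_{G_{\omega_0}}(n_2,n_3)>\cdots>C_{G_{\omega_0}}(n_2,\ldots,n_s)=1.$$
Setting $M_1:=1$ and $M_i:=\langle n_2,\ldots,n_i\rangle\le N$ for $i\ge 2$, the strict descent of centralizers forces $M_1<M_2<\cdots<M_s$ to be a strictly increasing chain of subgroups of $N$. Iterating Lagrange yields $|M_s|\ge 2^{s-1}$, and combining this with $|M_s|\le |N|=t$ gives $\Irred(G)=s\le \log t+1$, as required.

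I expect the only real obstacle to be the centralizer identification; it is where both of the hypotheses on $N$ (normality and regularity) are essential, and the commutator trick $[n_i,h]\in N\cap G_{\omega_0}=1$ is exactly what makes both hypotheses enter in the right way. The remaining derivation of the logarithmic bound is purely combinatorial and mechanical.
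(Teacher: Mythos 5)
Your proof is correct and follows essentially the same route as the paper: identify $\Omega$ with $N$ so that $G_{\omega_0}$ acts by conjugation, translate the strictly decreasing stabilizer chain into a strictly decreasing centralizer chain, deduce a strictly increasing chain of subgroups $\langle n_2,\ldots,n_i\rangle$ of $N$, and conclude by Lagrange. The only cosmetic difference is that the paper invokes the permutation isomorphism with the affine action to identify stabilizers with centralizers directly, whereas you re-derive that identification by hand via the commutator-in-$N\cap G_{\omega_0}=1$ trick; the substance is the same.
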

\begin{proof}
Suppose that $G$ has a regular normal subgroup $N$ and fix $\omega_0 \in \Omega$. The action of $G$ on $\Omega$ is permutation isomorphic to the ``affine'' action of $G$ on $N$, where the group $N$ acts on $N$ via translations and the group $G_{\omega_0}$ acts by group conjugation. In particular, if $n,v\in N$ and $x\in G_\omega$, then
$$v^{xn}=v^x \cdot n = x^{-1}vx\cdot n.$$
In what follows, we identify $\Omega$ with $N$. We let $[\omega_1,\ldots,\omega_k]$ be an irredundant base and we set $H:=G_{\omega_0}$. We may assume, without loss of generality, that $\omega_1=1\in N$.
Now, $H_{\omega_2}=\cent H {\omega_2}$ fixes $\omega_2$ and hence it fixes each element of $\langle \omega_2\rangle$. Similarly, $$H_{\omega_2,\omega_3}=\cent H{\omega_2}\cap \cent H{\omega_3}=\cent H{\langle \omega_2,\omega_3\rangle}.$$ Continuing in this way, we obtain a chain of subgroups of $H$,
\begin{align}\label{eq:eqeq1}
H&\ge H_{\omega_2}=\cent H{\omega_2}\ge H_{\omega_2,\omega_3}=\cent H{\langle \omega_2,\omega_3\rangle}\ge\cdots\\\nonumber
&\cdots\ge H_{\omega_2,\omega_3,\ldots,\omega_k}=\cent H{\langle \omega_2,\omega_3,\cdots,\omega_k\rangle},
\end{align}
and a chain of subgroups of $N$,
\begin{align}\label{eq:eqeq2}
\langle 1\rangle&\le \langle \omega_2\rangle\le\cdots\le\langle \omega_2,\ldots,\omega_k\rangle\le N.
\end{align}
Since $[\omega_1,\ldots,\omega_k]$ is an irredundant base, the inequalities in~\eqref{eq:eqeq1} are strict inequalities. This yields that the inequalities in~\eqref{eq:eqeq2} must also be strict. In particular,
\begin{equation*}k\le \log|N|+1.\qedhere
\end{equation*}
\end{proof}

\section{Diagonal action}\label{s: diagonal}

In this section we prove Theorem~\ref{t: irred} under the supposition that $G$ is a primitive group of type SD, see~\cite{praeger}. Our main result is the following.

\begin{prop}\label{p: diagonal}
Let $G$ be a primitive permutation group on a finite set $\Omega$ of size $t$. If $G$ is of type SD, then
\[ \Irred(G) \le\log t.
\]
\end{prop}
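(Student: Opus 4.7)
The plan is to adapt the approach of Proposition~\ref{p: regular normal} to the simple-diagonal setting. I would identify $\Omega$ with the coset space $[N:D]$, where $N = T^k$ is the socle and $D \cong T$ is the diagonal point stabilizer, so that $|\Omega| = |T|^{k-1} = t$. Given an irredundant base $[\omega_1, \ldots, \omega_r]$, I would normalize so that $\omega_1$ corresponds to $D$ itself (hence $N_{\omega_1} = D$) and, for $i \geq 2$, choose a coset representative $g_i = (1, y_2^{(i)}, \ldots, y_k^{(i)}) \in N$ for $\omega_i$.

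A direct computation then shows that $N \cap G_{\omega_1, \ldots, \omega_i}$ embeds diagonally in $N$ as $C_T(K_i)$, where $K_i := \langle y_\ell^{(j)} : 2 \leq \ell \leq k,\; 2 \leq j \leq i \rangle \leq T$. Equivalently, under the natural bijection $\Omega \cong T^{k-1}$, the subgroups $L_i := \langle \omega_2, \ldots, \omega_i \rangle \leq T^{k-1}$ form an ascending chain. The goal is to show that this chain strictly increases whenever the $G$-stabilizer chain strictly decreases; since a chain of subgroups of $T^{k-1}$ has length at most $\log|T^{k-1}| = \log t$, this would yield $r \leq \log t$ immediately.

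The main obstacle is the action of elements of $G \setminus N$, which involves outer automorphisms and factor permutations that in general do not define group homomorphisms of $T^{k-1}$ (for instance, a factor swap acts as inversion on $T$, an anti-homomorphism). Consequently, a new point $\omega_{i+1} \in L_i$ might still reduce the $G$-stabilizer via an outer element that fixes the generators $\omega_2, \ldots, \omega_i$ but moves some other element of $L_i$. To circumvent this, I would combine the ascending chain $(L_i)$ in $T^{k-1}$ with the descending chain $(G_{\omega_1, \ldots, \omega_i} N / N)$ in $G/N$ via Lemma~\ref{l: chain}, exploiting the SD-specific structure (namely, $G/N$ embeds in $\Out(T) \cdot P$ with $P \leq \Sym(k)$ transitive on the $k$ simple factors) and the restricted form of centralizer chains in the non-abelian simple group $T$, so as to secure the combined bound $r \leq \log t$.
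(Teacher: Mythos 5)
Your outline converges on the same intermediate bound as the paper, but two essential pieces are missing and one step is genuinely handled differently, so let me spell this out.

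Your hoped-for direct argument---that the ascending chain $(L_i)$ in $T^{k-1}$ strictly increases whenever the $G$-stabilizer chain strictly decreases, giving $\Irred(G)\le\log|T^{k-1}|=\log t$ immediately---does fail, exactly for the reason you identify, and the fallback via Lemma~\ref{l: chain} is the right move. At that point your bound becomes $\Irred(G)\le\Irred(N)+\ell(G/N)$, and since $N_{\omega_1}\cong T$ one gets $\Irred(N)\le\ell(T)+1\le\log|T|$, while for $m\ge 3$ one has $G/N\le\Out(T)\times\Sym(m)$ so $\ell(G/N)\le\log|\Out(T)|+\tfrac{3m}{2}$. The total is $\log|\Aut(T)|+\tfrac{3m}{2}$, which is \emph{numerically the same} as what the paper obtains by the simpler route $\Irred(G)\le\ell(W_\omega)+1$ with $W_\omega=\Aut(T)\times\Sym(m)$ (the paper never needs the detailed diagonal-centralizer computation you set up; it only uses the stabilizer of a single point). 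So as far as it goes, your route is a legitimate alternative, if slightly more elaborate than necessary.

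However, the sentence ``so as to secure the combined bound $r\le\log t$'' is where the proof actually needs to happen, and you have not supplied it. The inequality
\[
\frac{3m}{2}+\log|\Aut(T)| \;\le\; (m-1)\log|T|
\]
is not automatic; the paper proves it using Quick's result that $|T|/|\Out(T)|\ge 30$ for any non-abelian simple group $T$, from which $|\Aut(T)|\le\frac{1}{30}|T|^2$, and then a short rearrangement using $|T|\ge 60$. Without some such input the argument does not close, and neither the structure of $G/N$ nor the ``restricted form of centralizer chains in $T$'' that you invoke substitutes for it: the bound $\ell(T)\le\log|T|$ is already used at full strength.

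More seriously, the case $m=2$ breaks your combined bound entirely. There $\ell(G/N)\le\log|\Out(T)|+1$ and $\Irred(N)\le\log|T|$, so the bound is roughly $\log|T|+\log|\Out(T)|+1$, which must be compared with $\log t=\log|T|$; this fails for every $T$. The paper handles $m=2$ by a completely different argument: it passes to $H=T^2\cdot\Out(T)$, observes $H$ has a regular normal subgroup $T^2$, applies the argument of Proposition~\ref{p: regular normal} to get $\Irred(H)\le\omega(|T|)+1$ where $\omega(|T|)$ is the number of prime divisors of $|T|$ (counted with multiplicity), shows this is at most $\log|T|-1$ unless $T\in\{\Alt(5),\PSL_3(2)\}$, treats those two groups directly, and finally uses $|W:H|=2$ to add one. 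None of this is visible in your proposal, and without it the statement is simply false along your route for $m=2$. You should separate out $m=2$ from the start and argue it via the regular-normal-subgroup structure of $T^2\cdot\Out(T)$, not via the $\Irred(N)+\ell(G/N)$ decomposition.
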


We start by reviewing the structure of primitive groups of diagonal type, this will also help us to set some notation.

Let $T$ be a non-abelian simple group, let $m$ be a positive integer with $m\ge 2$ and let $S:=T^m$ be the Cartesian product of $m$ copies of $T$. We denote by $D:=\{(t,\ldots,t)\mid t\in T\}$ the diagonal subgroup of $S$ and we set 
$$\Omega:= [ D :S ]$$
the set of right cosets of $D$ in $S$. Each element of $\Omega$ has a distinguished coset representative, that is, an element whose first coordinate begins with a $1$. In other words,
$$D(t_1,t_2,\ldots,t_m)=D(1,t_1^{-1}t_2,\cdots,t_1^{-1}t_m).$$
In particular, $\Omega$ is in one-to-one correspondence with the elements of $T^{m-1}$ and hence
\begin{equation}\label{eq: tt}
\log |\Omega|=(m-1)\log |T|.
\end{equation}
Observe, first, that the elements of $S$ act on $\Omega$ by coset multiplication, that is, for every $D(t_1,\ldots,t_m)\in\Omega$ and $(x_1,\ldots,x_m)\in S$ we have
$$D(t_1,\ldots,t_m)^{(x_1,\ldots,x_m)}=D(t_1x_1,\ldots,t_m x_m).$$
Observe, second, that the elements of $\mathrm{Aut}(T)$ act on $\Omega$ ``componentwise'', that is, for every $D(t_1,\ldots,t_m)\in\Omega$ and $\varphi\in \Aut(T)$ we have
$$D(t_1,\ldots,t_m)^\varphi=D(t_1^\varphi,\ldots,t_m^\varphi).$$
Two comments are in order. First, this does indeed define an action of $\Aut(T)$ on $\Omega$ because $D$ is $\Aut(T)$-invariant. Second, the inner automorphisms of $\Aut(T)$ induce on $\Omega$ permutations appearing in $S$. (Let us denote by $\iota_x$ the inner-automorphism of $T$ defined by $x\in T$.) Indeed, for every $D(t_1,\ldots,t_m)\in\Omega$ and $x\in T$, we have
\begin{align*}
D(t_1,\ldots,t_m)^{\iota_x}&=D(t_1^x,\ldots,t_m^x)=D(x^{-1}t_1x,\ldots,x^{-1}t_m x)\\
&=D(t_1x,\ldots,t_m x)=D(t_1,\ldots,t_m)^{(x,\ldots,x)}.
\end{align*}
Therefore, $\iota_x$ and $(x,\ldots,x)$ induce the same permutation on $\Omega$.

Observe, third, that $\Sym(m)$ acts on $\Omega$ by permuting the coordinates. Again, this action is well defined because $D$ is $\Sym(m)$-invariant. It is easy to see that $\Aut(T)$ and $\Sym(m)$ centralize each other and they normalize $S$. We define
$$W:=S(\Aut(T)\times \Sym(m))\cong T^{m}\cdot (\mathrm{Out}(T)\times \Sym(m)).$$

The group $W$ acts primitively on $\Omega$ and any subgroup $G$ of $W$ containing the socle $S$ and projecting primitively on $\Sym(m)$ is said to be a primitive group of diagonal type. With the notation just established we have the following.

\begin{lem}\label{l: diagonal}
Let $G$ be a primitive group of SD type with socle $T^m$, for some non-abelian simple group $T$ and for some integer $m\ge 2$. Then
 \[
  \Irred(G) \le
\begin{cases}
\frac{3m}{2} + \log |\Aut(T)|,&\textrm{when }m\ge 3,\\
\log|T|,&\textrm{when }m=2.
\end{cases}
 \]
\end{lem}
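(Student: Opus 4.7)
The plan is to apply Lemma~\ref{l: chain} with $N=S$, the socle $T^m$, giving $\Irred(G)\le \Irred(S)+\ell(G/S)$, and to bound the two summands separately.

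For the first summand, the crucial observation is that the stabilizer $S_{\omega_1}$ of any point $\omega_1\in\Omega$ under the action of $S$ is a conjugate of the diagonal subgroup $D\cong T$. Hence for any irredundant base $[\omega_1,\ldots,\omega_r]$ of $S$ on $\Omega$, the tail of the stabilizer chain $S_{\omega_1}>S_{\omega_1,\omega_2}>\cdots>1$ is strictly decreasing inside a copy of $T$ and so has length at most $\ell(T)\le \log|T|$; counting the initial descent $S>S_{\omega_1}$ as well, $\Irred(S)\le 1+\log|T|$. For the second summand, since $G$ is of SD type we have $G/S\le \Out(T)\times\Sym(m)$, so by Lemma~\ref{l: l dp}, $\ell(G/S)\le \log|\Out(T)|+\ell(\Sym(m))$. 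Using the Cameron--Solomon--Turull formula $\ell(\Sym(m))=\lceil 3m/2\rceil-b(m)-1$ (with $b(m)$ the number of $1$s in the binary expansion of $m$), one checks that $\ell(\Sym(m))+1\le 3m/2$ for every $m\ge 2$. Combining, for $m\ge 3$:
\[
\Irred(G)\le (1+\log|T|)+\log|\Out(T)|+\ell(\Sym(m))\le \tfrac{3m}{2}+\log|T|+\log|\Out(T)|=\tfrac{3m}{2}+\log|\Aut(T)|,
\]
which is the desired bound.

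For the case $m=2$ the same approach only yields a bound of order $\log|T|+\log|\Out(T)|+2$, which is strictly weaker than the claimed $\log|T|$, so a sharper argument is needed. The main obstacle is that when $m=2$ the group $G$ contains no regular normal subgroup---the two factors $T\times\{1\}$ and $\{1\}\times T$ of $S$ are swapped by the $\Sym(2)$ component of $G$---so Proposition~\ref{p: regular normal} cannot be invoked directly. I would treat this case by identifying $\Omega\cong T$ via $D(1,t)\mapsto t$, normalising $\omega_1=1$ using the regular (but non-normal) action of $T\times\{1\}$, and then analysing the residual chain inside $G_{\omega_1}$, which embeds in the action of $\Aut(T)\times\langle\iota\rangle$ on $T$ by automorphism application and inversion. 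Stabilizer chains in this conjugation-type action correspond to chains of centralisers of increasing subgroups of $T$, so their length is ultimately controlled by $\ell(T)$; the delicate part is the bookkeeping required to squeeze the total count down to exactly $\log|T|$.
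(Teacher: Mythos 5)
Your argument for $m\ge 3$ is correct, but it takes a genuinely different route from the paper. You apply Lemma~\ref{l: chain} with $N=S$ (the socle), bound $\Irred(S)\le 1+\log|T|$ by noting $S_{\omega_1}\cong D\cong T$, and then use Lemma~\ref{l: l dp} on $G/S\le\Out(T)\times\Sym(m)$. The paper instead bounds $\Irred(G)\le\ell(W_\omega)+1$ directly, where $W_\omega=\Aut(T)\times\Sym(m)$ is the point stabiliser in the full normaliser $W$, then applies Lemma~\ref{l: l dp} to $W_\omega$. Your socle decomposition and the paper's point-stabiliser decomposition arrive at the same numerical bound; your version has the minor advantage of not needing the paper's observation that $\ell(\Aut(T))\le\log|\Aut(T)|-1$, since the $+1$ you pay for $\Irred(S)$ is absorbed by your slightly sharper $1+\ell(\Sym(m))\le 3m/2$.

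For $m=2$ you correctly diagnose that the same approach gives only $\log|T|+\log|\Out(T)|+O(1)$ and that a sharper argument is required, but you leave the case unfinished. The missing idea in the paper is twofold. First, although $G$ itself has no regular normal subgroup, the index-$2$ subgroup $H:=T^2\cdot\Out(T)$ of $W$ does (the factor $T\times\{1\}$ acts regularly on $\Omega\cong T$ and is normal in $H$ because $\Sym(2)$ has been removed), so one may apply the Proposition~\ref{p: regular normal} machinery to $H$ and then simply add $1$ for the index-$2$ step. Second, and crucially, the Proposition~\ref{p: regular normal} argument actually yields the stronger bound $\Irred(H)\le\Omega(|T|)+1$, where $\Omega(|T|)$ counts prime divisors of $|T|$ with multiplicity, because the chain $\langle\omega_2\rangle<\langle\omega_2,\omega_3\rangle<\cdots$ in $T$ descends by at least one prime per step; this is strictly better than $\log|T|+1$ unless $|T|$ is a $2$-power, which it is not. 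Writing $|T|=2^v o$ with $o$ odd and using $\log_3 o\le\log o - 2$ for $o\ge 43$ gives $\Omega(|T|)\le\log|T|-2$, hence $\Irred(G)\le\Irred(H)+1\le\log|T|$, with the small exceptional $T$ ($T\in\{\Alt(5),\PSL_3(2)\}$) handled directly. Your sketch (analysing centraliser chains inside $G_{\omega_1}$) is a plausible alternative framing but does not by itself produce the extra saving of $2$ that the claimed bound needs; the prime-counting refinement is what closes the gap.
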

\begin{proof}
By Lemma~\ref{l: subgroup}, we can assume $G=W$. Set $\omega:=D(1,\ldots,1)$. A computation shows that  $$W_\omega=\Aut(T)\times \Sym(m).$$

Suppose first that $m\ge 3$. Lemma~\ref{l: l dp} implies that
\[
 \ell(W_\omega)\leq \ell(\Aut(T))+\ell(\Sym(m)).
\]
By \cite{cameron_solomon_turull}, we know that $\ell(\Sym(m))\le \frac{3m}{2}$. On the other hand $\ell(\Aut(T))\leq \log|\Aut(T)|-1$, where the $-1$ accounts for the fact that $|T|$ is divisible by at least two distinct odd primes. We conclude that
\begin{equation}\label{eq:eqeqeq4}
\Irred(G) \leq \ell(W_\omega)+1 \leq \frac{3m}{2}+\log |\Aut(T)|.
\end{equation}

Suppose next that $m=2$. We identify $\Omega$ with $T$. Set $H:=T^2\cdot \Out(T)$ and observe that $H$ has a regular normal subgroup and that $|W:H|=2$. Arguing as in the proof of Proposition~\ref{p: regular normal}, we deduce that
$$\Irred(H)\le \omega(|T|)+1,$$
where $\omega(|T|)$ is the number of prime divisors of $|T|$. Write $|T|=2^v o$, where $v,o\in \mathbb{N}$ and $o$ is odd. If $o\ge 43$, we have $\log_3(o)\le \log(o)-2$ and hence $\omega(|T|)\le \log(2^v)\log_3(o)\le \log|T|-2$ and hence $\Irred(H)\le \log|T|-1$. If $o<43$, then $T$ has a Sylow $2$-subgroup of index at most $41$ in $T$ and hence $T$ admits a faithful primitive permutation representation of degree at most $41$. Thus we have only a finite number of simple groups satisfying this property. A direct analysis yields that $T$ is either $\Alt(5)$ or $\PSL_3(2)$. When $T=\PSL_3(2)$, we have $\omega(|T|)=5<\log|T|-2$ and hence we obtain again $\Irred(H)\le \log|T|-1$. Therefore, except when $T=\Alt(5)$,  as $|G:H|=2$, we have $\Irred(G)\le \Irred(H)+1\le\log |T|$. When $T=\Alt(5)$, we have checked that $\Irred(G)=5\le \log|T|$.
 \end{proof}

\begin{proof}[Proof of Proposition~\ref{p: diagonal}]When $m=2$, the proof follows immediately from Lemma~\ref{l: diagonal}. Assume  that $m\ge 3$.
Lemma~\ref{l: diagonal} and \eqref{eq: tt} imply that it is sufficient to prove that
\[ \frac{3m}{2}+\log |\Aut(T)| \leq (m-1)\log |T|.
\]
We argue by contradiction and we suppose that this inequality does not hold.

 In \cite[Lemma 2.2]{quick} it is shown that, for a non-abelian simple group $T$
\[ \frac{|T|}{|\Out(T)|} \geq 30.
\] Now, for a centerless group, we have $|\Aut(T)| = |\Out(T)||T|$, hence $|\Aut(T)| \leq \frac{1}{30}|T|^2$. Thus
\[ \frac{3m}{2}+\log |\Aut(T)|\leq \frac{3m}{2} +2 \log |T| - \log 30.
\]
Therefore,
$$(m-1)\log|T|< \frac{3m}{2} +2 \log |T| - \log 30.$$
Rearranging the terms and dividing by $\log |T|$, we obtain
$$m-3<\frac{3m}{2\log |T|}-\frac{\log 30}{\log |T|}.$$
 An easy computation (using $|T|\ge 60$) shows that this is never satisfied.
\end{proof}

\section{Product actions}\label{s: product}

In the break-down described in \cite{praeger} there are three classes of groups left to deal with to prove Theorems~\ref{t: irred} and \ref{t: height}. In this section we deal with groups of type CD or PA. Note that our result for type CD wil be definitive, whereas our result for type PA will involve input from groups of type AS -- and these will be dealt with in the remainder of the paper. 

Note, furthermore, that we will need a result for groups of type PA that is specific to $\Irred(G)$, and another that is specific to $\Height(G)$. In all cases we let $H$ be a primitive non-regular group on $\Delta$ of type AS or SD and let $n$ be a positive integer with $n\ge 2$. We define $W:=H\mathrm{wr}\Sym(m)$ endowed with its primitive product action on the Cartesian product $\Omega:=\Delta^m$, that is, for every $(h_1 \dots, h_{m})\sigma \in W$ and $(\delta_1, \dots, \delta_{m}) \in \Omega$ we have
 
\[ 
\left(\delta_1, \dots, \delta_{m} \right)^{\left( h_1, \dots, h_{m}\right)\sigma}
= \left(\delta_1^{h_1}, \dots, \delta_{m}^{h_{m}}\right)^{\sigma}=
\left( \delta_{1^{\sigma^{-1}}}^{h_{1^{\sigma^{-1}}}}, \dots, \delta_{m^{\sigma^{-1}}}^{h_{m^{\sigma^{-1}}}} \right).
\]
Let $t := |\Omega|= |\Delta|^{m}$ and let $\pi:W\to\Sym(m)$ be the natural projection. Observe that the kernel of $\pi$ is the base group of $W$, that is, $H^m$. 

With the notation just established, the result we need is the following.

\begin{prop}\label{p: I CD and PA}
Let $G$ be a primitive permutation group on a finite set, $\Omega=\Delta^m$, of size $t$. 
\begin{enumerate}
 \item\label{eq:eqeqeq1} If $H$ is of type SD, then $\Irred(G) < 2 \log t$.
 \item\label{eq:eqeqeq2} If $H$ is of type AS and $\Irred(H,\Delta) < C\log |\Delta|$, then $\Irred(G) < (C+\frac{3}{2\log |\Delta|})\log t$.
 \item\label{eq:eqeqeq3} If $H$ is of type AS and $\Height(H,\Delta) < C\log |\Delta|$, then $\Height(G) < (C+\frac{3}{2\log |\Delta|})\log t$.
\end{enumerate}
 \end{prop}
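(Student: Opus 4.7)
The plan is to reduce every case to the base group of the wreath product and then apply Lemma~\ref{l: chain}, together with Lemmas~\ref{l: viva} and~\ref{l: viva1}, which control the statistics of a direct product in terms of its factors. Set $N := G\cap H^m$. Since $H^m$ is the kernel of $\pi\colon W\to \Sym(m)$, the subgroup $N$ is normal in $G$ and $G/N$ embeds in $\Sym(m)$. By the bound of Cameron--Solomon--Turull already invoked in Lemma~\ref{l: diagonal}, we have $\ell(G/N)\le \ell(\Sym(m))\le 3m/2$.

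Next, Lemma~\ref{l: subgroup} gives $\Height(N,\Omega)\le \Height(H^m,\Omega)$ and an identical inequality for $\Irred$. Iterating Lemma~\ref{l: viva} yields $\Height(H^m,\Delta^m)\le m\,\Height(H,\Delta)$, while iterating Lemma~\ref{l: viva1} yields $\Irred(H^m,\Delta^m)=m\,\Irred(H,\Delta)-(m-1)\le m\,\Irred(H,\Delta)$. Combining these with Lemma~\ref{l: chain} gives the two master inequalities
\begin{equation*}
 \Height(G)\le m\,\Height(H,\Delta)+\tfrac{3m}{2},\qquad \Irred(G)\le m\,\Irred(H,\Delta)+\tfrac{3m}{2}.
\end{equation*}
Using $\log t = m\log|\Delta|$, the hypothesis $\Irred(H,\Delta)<C\log|\Delta|$ of~\eqref{eq:eqeqeq2} immediately gives
\begin{equation*}
 \Irred(G)< mC\log|\Delta|+\tfrac{3m}{2}=C\log t+\tfrac{3\log t}{2\log|\Delta|}=\left(C+\tfrac{3}{2\log|\Delta|}\right)\log t,
\end{equation*}
establishing~\eqref{eq:eqeqeq2}; the argument for~\eqref{eq:eqeqeq3} is identical using the height master inequality.

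For part~\eqref{eq:eqeqeq1}, where $H$ is of type SD, Proposition~\ref{p: diagonal} supplies $\Irred(H,\Delta)\le \log|\Delta|$, so the master inequality yields $\Irred(G)\le \log t+\tfrac{3m}{2}-(m-1)$ after using the sharper form of Lemma~\ref{l: viva1}. The estimate $\Irred(G)<2\log t$ then follows provided $\tfrac{m}{2}+1<\log t=m\log|\Delta|$; since an SD-type factor has $|\Delta|\ge|T|\ge 60$ and hence $\log|\Delta|>5$, this holds trivially for every $m\ge 2$. The arguments are essentially mechanical once the master inequalities are in place; the only point requiring a little care is the verification that $N\trianglelefteq G$ and that the quotient $G/N$ is genuinely a subgroup of $\Sym(m)$ (rather than just of $\mathrm{Out}(T)\wr\Sym(m)$ or similar), but this is immediate from the definition of product action and of $N$.
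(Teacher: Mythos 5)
Your proposal is correct and follows essentially the same strategy as the paper: reduce via Lemma~\ref{l: chain} to the base group $H^m$, control $\Irred(H^m,\Delta^m)$ and $\Height(H^m,\Delta^m)$ via Lemmas~\ref{l: viva1} and \ref{l: viva}, and bound $\ell(\Sym(m))\leq\frac{3}{2}m$. The only cosmetic difference is that you set $N=G\cap H^m$ and apply the chain lemma to $G$ directly, whereas the paper applies it to $W$ and then passes to $G\leq W$ by monotonicity of $\Irred$ (the $\Irred$ analogue of Lemma~\ref{l: subgroup}, which both you and the paper use but which is stated only for height); your sharper use of Lemma~\ref{l: viva1} in part (1) is also a harmless variant of the paper's computation.
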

\begin{proof}
We use, once again, the fact that a subgroup chain in $\Sym(m)$ has length at most $\frac32m$, i.e. $\ell(\Sym(m))\leq \frac32m$ \cite{cameron_solomon_turull}.

We start with~\eqref{eq:eqeqeq1}: From Lemma~\ref{l: chain}, we have $$\Irred(W,\Omega)\le \Irred(H^m,\Delta^m)+\ell(W/H^m)=
\Irred(H^m,\Delta^m)+\ell(\Sym(m))\le \Irred(H^m,\Delta^m)+\frac{3m}{2}.$$Now, Lemma~\ref{l: viva1} yields 
$\Irred(G,\Omega)\le m\Irred(H,\Delta)+3m/2$. Thus
\begin{equation}\label{e: chain}
 \Irred(W,\Omega) \leq m\Irred(H,\Delta) +\frac{3}{2}m.
\end{equation}

Let $G \leq W$ be primitive. Then we obtain that
\[ \Irred(G,\Omega)\le m \Irred(H,\Delta)+\frac{3m}{2}.
\]

If $H$ is of type SD, then Proposition~\ref{p: diagonal} implies that $\Irred(H, \Delta) \le\log |\Delta|$ and hence
\begin{align*} 
\Irred(G) &\le m \log|\Delta| + \frac{3}{2}m < 2m \log|\Delta|=2\log t,
\end{align*}
where the second inequality follows with a computation using $|\Delta|\ge 60$.

For (2) and (3), recall that if $H$ is of type AS, then $G$ is of type PA. Now, for item (2) we know, by supposition, that $\Irred(H) <C \log t$. This fact combined with \eqref{e: chain} yields the result. For item (3) the argument is the same, provided all occurrences of $\Irred(X,Y)$ are replaced with $\Height(X,Y)$ (for varying $X$ and $Y$) and replacing Lemma~\ref{l: viva1} with  Lemma~\ref{l: viva}.
\end{proof}

\section{Almost simple groups: Bounds for \texorpdfstring{$\Irred(G)$}{I(G)}}\label{s: as I}

Here we deal with groups ``of type AS'' in the language of \cite{praeger}; in other words, we study almost simple primitive permutation groups. The main result of this section is the following.

\begin{prop}\label{p: as}
 Let $G$ be a primitive almost simple permutation group on a set, $\Omega$, of size $t$. 
Either $\Irred(G)\le 6 \log t$ or else $G$ is one of the groups listed at (1) in Theorem~\ref{t: irred}.
\end{prop}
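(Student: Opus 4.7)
My plan is to divide into cases according to the socle $S$ of $G$, using the fact that $\Irred(G)\le \ell(G_\alpha)+1$ for any point $\alpha\in\Omega$ (any irredundant base gives a strict stabilizer chain in $G$ passing through $G_\alpha$). In each case I either identify the action as one appearing in item (1) of Theorem~\ref{t: irred}, or I bound $\ell(G_\alpha)$ by a small multiple of $\log t$ using known chain-length estimates.

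When $S$ is sporadic the list is finite, and a direct check against the known maximal-subgroup data suffices. When $S = \Alt(n)$ the primitive actions split into three families according to the embedding of the point stabilizer: intransitive actions on $k$-subsets, which are precisely the groups listed at item (1a) of Theorem~\ref{t: irred}; imprimitive actions on uniform partitions with stabilizer $\Sym(s)\wr\Sym(r)$, where $\ell(G_\alpha)\le \tfrac{3s}{2}+\tfrac{3r}{2}$ by the Cameron--Solomon--Turull bound and Lemma~\ref{l: chain}, and this is easily dominated by $6\log t$ since $t = n!/(s!^r r!)$ grows super-exponentially in $n$; and primitive subgroup actions, where Mar\'oti's bound $|G_\alpha|\le 4^n$ gives $\ell(G_\alpha)\le 2n$ while $\log t \geq \log(n!) - 2n$ comfortably dominates for $n$ large enough (small $n$ handled directly).

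When $S$ is an exceptional group of Lie type, the Lie rank is bounded, so $\ell(G)\le C\log|G|$ for an absolute constant $C$ by results of Liebeck--Shalev on chain lengths in simple groups. For each exceptional type, every primitive action has degree $t \ge |G|^{\alpha}$ for some $\alpha>0$ (coming from the minimal-index subgroup), and so $\Irred(G)\le \ell(G)\le (C/\alpha)\log t$; an explicit computation shows that $6\log t$ suffices.

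The main case --- and the chief obstacle --- is when $S$ is a classical group. Here I would invoke Aschbacher's classification of maximal subgroups of classical groups into the geometric classes $\mathcal{C}_1,\ldots,\mathcal{C}_8$ and the almost-simple class $\mathcal{S}$. The groups at item (1b) of Theorem~\ref{t: irred} correspond to certain $\mathcal{C}_1$-actions (stabilizers of totally isotropic or non-degenerate subspaces, and subspace actions of $\PSL$) together with the $\mathcal{C}_2$-action on pairs of complementary subspaces. For every other class, the point stabilizer is ``small enough'': for $\mathcal{C}_i$ with $i\ge 2$ the order of $G_\alpha$ is a polynomial in $q$ of low degree relative to $|G|$, while for $\mathcal{S}$ one uses the Liebeck bound on the order of non-geometric maximal subgroups. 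In each case the chain-length bounds of Liebeck--Shalev give $\ell(G_\alpha)\le c\log t$ with $c$ sufficiently small. Verifying that the uniform constant $6$ works will require careful book-keeping in low-rank or small-$q$ classical groups, where the chain-length estimates are tightest and where one must distinguish the genuinely exceptional pair-of-subspace action from the generic $\mathcal{C}_2$ imprimitive actions.
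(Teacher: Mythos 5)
Your proposal takes a genuinely different route from the paper, and one step in it fails as stated. The paper avoids almost all of your case analysis with a single shortcut: by the Cameron--Kantor conjecture (Burness, Liebeck--Shalev \emph{et al.}, Theorem~\ref{t: base} here), a non-standard primitive almost simple action has $\base(G)\le 7$, and combining with the inequality $\Irred(G)\le \base(G)\log t$ from~\eqref{eq: inequality} immediately gives $\Irred(G)\le 7\log t$ (improved to $6\log t$ after checking $\mathrm{M}_{24}$). This disposes of the sporadic, exceptional, and $\mathcal{S}$/non-subspace classical cases, as well as the primitive stabilizers in $\Alt(n)$, all at once, leaving only standard actions. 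Your bounded-rank/Mar\'oti-type estimates could in principle be made to work for those cases, but they reinvent a wheel that already exists.

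The genuine gap is in your treatment of the classical groups. You exclude the item (1b) actions, identify them with ``certain $\mathcal{C}_1$-actions together with the $\mathcal{C}_2$-action on pairs of complementary subspaces'', and claim that for every other Aschbacher class the point stabilizer is small enough that $\ell(G_\alpha)\le 6\log t$. This fails for the $\mathcal{C}_8$ case $G_0=\Sp_{2m}(q)$, $q$ even, with $G_\alpha\cap G_0 = \Or_{2m}^\pm(q)$: this is a subspace action in the sense of the Cameron--Kantor theorem (so base size 7 does \emph{not} apply), it is \emph{not} an action on subspaces so it is not listed at (1b), and the index is tiny: $t=\tfrac12 q^m(q^m\pm 1)$, so $6\log t\approx 12m\log q$, while $\log|G_\alpha|\approx(2m^2-m)\log q$, which is much larger for $m\ge 7$. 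The crude bound $\Irred(G)\le\ell(G_\alpha)+1$ therefore does not suffice, and the paper needs a dedicated combinatorial argument (Lemma~\ref{l: symp orth}, via a vector-space chain showing $\Irred(\Sp_{2m}(q),\Omega^\pm)\le 2m+1$) to handle it. Your proposal does not address this case at all. Two smaller inaccuracies: the ``pairs of subspaces'' in (1b) arise only when $G$ contains the inverse-transpose graph automorphism of $\PSL_n(q)$, not as a $\mathcal{C}_2$-action, and $\ell(\Sym(s)\wr\Sym(r))$ is bounded by $\tfrac{3sr}{2}+\tfrac{3r}{2}$, not $\tfrac{3s}{2}+\tfrac{3r}{2}$, though this does not affect the conclusion.
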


The proof of Proposition~\ref{p: as} splits into several parts. To start with we use well-known results bounding $\base(G)$ to deal with so-called ``non-standard actions''. The terminology below follows \cite{burness_classical}.

\begin{defn}\label{d: subspace action}
Let $G$ be a classical group with socle $G_0$, and associated natural module $V$. A subgroup $H$ of $G$ not containing $G_0$ is a \emph{subspace subgroup} if for each maximal subgroup $M$ of $G_0$ containing $H \cap G_0$ one of the following holds:
\begin{enumerate}
\item $M$ is the stabilizer in $G_0$ of a proper non-zero subspace $U$ of $V$, where $U$ is totally singular, non-degenerate or, if $G_0$ is orthogonal and $p=2$, a non-singular $1$-space ($U$ can be any subspace if $G_0=\PSL(V)$).
\item $G_0= \Sp_{2m}(q)$, $p=2$ and $M= \Or^{\pm}_{2m}(q)$.
\end{enumerate}
A \emph{subspace action} of the classical group $G$ is the action of $G$ on the coset space $\left[G:H \right]$, where $H$ is a subspace subgroup of $G$.
\end{defn}

Note that the definition above amounts precisely to this: a maximal subgroup of $G$ is a subspace subgroup if it lies in any $\mathcal{C}_1$ class, or is the even--characteristic symplectic case in the $\mathcal{C}_8$ class. This definition requires that we follow \cite{kl} in labeling the classes $\mathcal{C}_1-\mathcal{C}_8$. A small extra collection of maximal subgroups arises when $G_0=\Sp_4(2^a)$ and $G$ contains a graph automorphism, or if $G_0=\POmega_8^+(q)$ and $G$ contains a triality graph automorphism. We note that \cite{kl} explicitly exclude these cases.

\begin{defn}
A transitive action of $G$ on a set $\Omega$ is said to be \emph{standard} if, up to equivalence of actions, one of the following holds:
\begin{enumerate}
\item $G_0 = \Alt(m)$ and $\Omega$ is an orbit of subsets or uniform partitions of $\left\{1, \dots, m \right\}$.
\item G is a classical group in a subspace action.
\end{enumerate}
\end{defn}

For an almost simple primitive permutation group in a non-standard action, the base size is bounded by an absolute constant. This was conjectured by Cameron and Kantor (\cite{cameron_conj},\cite{cameron_kantor}) and then settled in the affirmative by Liebeck and Shalev in \cite[Theorem 1.3]{liebeck_shalev}. The constant was then made explicit in subsequent work  \cite{burness_guralnick_saxl, burness_classical, burness_sporadic, burness_liebeck_shalev}. The following theorem summarizes these results.

\begin{thm}\label{t: base}
Let $G$ be a finite almost simple group in a primitive faithful non-standard action. Then $\base(G) \leq 7$, with equality if and only if $G$ is the Mathieu group $\mathrm{M}_{24}$ in its natural action of degree 24. 
\end{thm}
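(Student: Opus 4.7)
The plan is to prove Theorem~\ref{t: base} via the \emph{probabilistic method} introduced by Liebeck and Shalev. The starting observation is that, for $G$ acting on $\Omega=[G:H]$, the proportion of $b$-tuples in $\Omega^b$ that fail to be a base satisfies
$$Q(G,b)\le \sum_{x\in G,\,x\ne 1}\mathrm{fpr}(x)^b=\sum_{C}|C|\cdot \mathrm{fpr}(C)^b,$$
where $\mathrm{fpr}(x)=|x^G\cap H|/|x^G|$ is the fixed-point ratio of $x$ in the action and the right-hand sum runs over non-identity conjugacy classes of $G$. To conclude $\base(G)\le b$ it suffices to show that the right-hand side is strictly less than $1$.

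The next, heavier, step is to establish sharp fixed-point-ratio bounds for non-standard actions. The key input is the Liebeck--Shalev inequality: for every $\varepsilon>0$ there is a constant so that, outside finitely many exceptions, any non-identity $x\in G$ satisfies $\mathrm{fpr}(x)\le |x^G|^{-1/2+\varepsilon}$ whenever the action of $G$ on $\Omega$ is non-standard. Plugging this into the union bound above yields $Q(G,b)\to 0$ as $|G|\to\infty$ for every fixed $b\ge 3$, which already implies $\base(G)\le b$ for all but finitely many $G$; handling these finitely many exceptions by computer (and by a careful tracking of constants in the fpr-bound) then cuts $b$ all the way down to $7$.

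The bulk of the work is the case analysis by socle $G_0$. For $G_0$ alternating with $\Omega$ not an action on subsets or uniform partitions, one reasons via support sizes of permutations, exploiting that non-standard primitive subgroups of $\Sym(m)$ have small order. For $G_0$ sporadic, a finite verification across the $26$ cases using character tables resolves everything; this is where $\mathrm{M}_{24}$ appears as the unique group attaining equality $\base(G)=7$. For $G_0$ classical with $H$ not a subspace subgroup, one splits $H$ by Aschbacher's theorem into the classes $\mathcal{C}_2,\ldots,\mathcal{C}_8$ and $\mathcal{S}$, and applies tailored fpr-estimates class by class (this is the content of Burness's long series of papers cited in the excerpt). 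For $G_0$ exceptional of Lie type one argues analogously via the Liebeck--Seitz classification of maximal subgroups together with fpr-bounds. In each family the target inequality is the sharp $Q(G,7)<1$, or $Q(G,6)<1$ when $G\ne \mathrm{M}_{24}$.

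The main obstacle is producing fixed-point-ratio bounds sharp enough to give the clean constant $7$, especially for short-root unipotent classes in classical groups and for $H$ lying in the class $\mathcal{S}$ of almost simple type: such classes have large $G$-conjugacy class size but very small $\mathrm{fpr}$, so the union bound is delicate and one cannot afford to waste constants. This is precisely why the original qualitative bound of Liebeck--Shalev had to be sharpened across several subsequent papers before the exact constant $7$ and the $\mathrm{M}_{24}$ characterisation emerged.
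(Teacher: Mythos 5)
The paper offers no proof of Theorem~\ref{t: base}: it is stated as a summary of results in the cited literature (Liebeck--Shalev \cite{liebeck_shalev} together with \cite{burness_guralnick_saxl, burness_classical, burness_sporadic, burness_liebeck_shalev}), and the paragraph preceding the statement says exactly that. There is therefore no ``paper's own proof'' to compare your sketch against; what you have written is a high-level account of how those cited works establish the theorem.

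As such an account your outline is broadly accurate: the probabilistic union bound $Q(G,b)\le\sum_{C}|C|\cdot\mathrm{fpr}(C)^b$, the fixed-point-ratio input, the socle-by-socle case analysis, and the identification of $\mathrm{M}_{24}$ as the unique case of equality are indeed the ingredients. One attribution should be corrected: the estimate $\mathrm{fpr}(x)\le |x^G|^{-1/2+\varepsilon}$ that you credit to Liebeck--Shalev is in fact Burness's refinement for classical groups in non-subspace actions (with separate sharp estimates handling alternating, sporadic and exceptional socles). Liebeck--Shalev prove the weaker $\mathrm{fpr}(x)\le |x^G|^{-\varepsilon}$ for some unspecified absolute $\varepsilon>0$, which already gives a uniform but implicit bound on $\base(G)$; the explicit constant $7$ requires the later sharpened fpr-estimates and the computational case-checking you gesture at. So your sketch correctly identifies the strategy, but is, as you acknowledge yourself, a summary of a substantial body of prior work rather than a self-contained argument, and the present paper does not reproduce that argument either.
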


Theorem~\ref{t: base} and \eqref{eq: inequality} immediately yield Proposition~\ref{p: as} for non-standard actions.

\begin{lem}\label{l: non-standard}
Let $G$ be a finite almost simple permutation group on a set $\Omega$ of size $t$, in a non-standard action. Then
\[ \Irred(G) \leq 6 \log t.
\]
\end{lem}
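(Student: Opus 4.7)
The plan is to combine Theorem~\ref{t: base} with the chain of inequalities in \eqref{eq: inequality}, splitting into cases according to whether $\base(G) \leq 6$ or $\base(G) = 7$.

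First, by Theorem~\ref{t: base}, we have $\base(G) \leq 7$, with equality only for the Mathieu group $\mathrm{M}_{24}$ in its natural action of degree $24$. So in the generic case, $\base(G) \leq 6$, and then the rightmost inequality in \eqref{eq: inequality} gives immediately
\[
\Irred(G) \leq \base(G)\log t \leq 6\log t,
\]
as required.

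The only remaining case is $G = \mathrm{M}_{24}$ acting on $24$ points. Here the bound $\Irred(G)\leq \base(G)\log t = 7\log 24 \approx 32.1$ is not strong enough, so instead I will use the easy bound $\Irred(G) \leq \ell(G) \leq \log|G|$ from the start of \S\ref{s: lemmas}. Since $|\mathrm{M}_{24}| = 244823040 < 2^{28}$, we obtain $\Irred(\mathrm{M}_{24}) \leq \lfloor \log |\mathrm{M}_{24}| \rfloor \leq 27$. On the other hand, $6 \log 24 = 6(3 + \log 3) > 27.5$, so $\Irred(\mathrm{M}_{24}) \leq 27 < 6\log 24$, and the lemma holds in this case as well.

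The only mild obstacle is that $\base(\mathrm{M}_{24}) = 7$ just exceeds our target coefficient; but the slack in $\log|\mathrm{M}_{24}|$ versus $6\log 24$ is comfortable, so no finer analysis of the subgroup structure of $\mathrm{M}_{24}$ is needed.
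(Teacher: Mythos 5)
Your proof is correct and follows the same high-level split as the paper (generic non-standard actions vs.\ the exceptional $\mathrm{M}_{24}$), but you handle $\mathrm{M}_{24}$ differently. The paper simply states that a direct computation shows $\Irred(\mathrm{M}_{24})=8$ in its natural degree-$24$ action, which comfortably sits below $6\log 24\approx 27.5$. You instead avoid any computation and use the general bound $\Irred(G)\le\ell(G)\le\log|G|$, then invoke integrality of $\Irred$ to push $\log|\mathrm{M}_{24}|\approx 27.87$ down to the floor $27$, which slips under $6\log 24\approx 27.51$. This is legitimate, and it has the virtue of being entirely computer-free; note, though, that the margin is razor-thin --- without the floor step your bound would \emph{not} work, since $\log|\mathrm{M}_{24}|>6\log 24$. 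By contrast, the paper's computational check gives $\Irred(\mathrm{M}_{24})=8$, which provides enormous slack (and is in fact the actual value). So your route trades a computer verification for a tight but valid numerical squeeze; the paper's route is shorter to write but depends on a \textsc{magma}-style calculation. Both are sound.
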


One can compute that $\Irred(M_{24})=8$ (where we consider the natural action of $M_{24}$ on $24$ points), hence we have a constant ``6'' in the statement of Lemma~\ref{l: non-standard} rather than ``7''.

\subsection{Standard actions of \texorpdfstring{$A_n$ and $S_n$}{An and Sn}}

Let $G$ be $\Sym(n)$ or $\Alt{(n)}$. We must prove Theorem~\ref{t: irred} for the action of $G$ on partitions of $n$. Let $n=ab$ for some positive integers $a,b$ with $a \geq 2$ and $b \geq 2$. We denote by $\Omega_a^b$ the set of all uniform partitions of $n$, with $a$ parts of size $b$. Let $t = | \Omega_a^b |$, then
\[ t = \frac{(ab)!}{a! (b!)^a}.
\]
We consider the action of $G$ on $\Omega_a^b$. We have the following result:

\begin{lem}\label{l: an partition}
Let $G$ be an almost simple group with socle $\Alt(n)$ acting on $\Omega=\Omega_a^b$. Then
\[ \Irred(G) < 2 \log t.
\] 
\end{lem}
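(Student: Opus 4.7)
The plan is to bound $\Irred(G)$ via the length of the longest subgroup chain inside the stabilizer of a single partition, and then compare this length to $2\log t$ using Stirling's formula together with the Cameron--Solomon--Turull estimate for $\ell(\Sym(k))$.

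I would begin by fixing any $\pi_1 \in \Omega$ and observing that
\[
\Irred(G) \le 1 + \ell(G_{\pi_1}).
\]
This is because an irredundant base $[\omega_1,\omega_2,\ldots,\omega_k]$ gives a strictly descending chain $G_{\omega_1} > G_{\omega_1,\omega_2} > \cdots > G_{\omega_1,\ldots,\omega_k}$ of length $k-1$ inside $G_{\omega_1}$, and by transitivity of $G$ on $\Omega$ all point stabilizers are conjugate, hence share the same $\ell$. Since $G_{\pi_1}$ is contained in the wreath product $\Sym(b)\wr\Sym(a)$, it then suffices to bound $\ell(\Sym(b)\wr\Sym(a))$.

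Using that $\Sym(b)^a$ is normal in $\Sym(b)\wr\Sym(a)$ with quotient $\Sym(a)$, an $\ell$-analogue of Lemma~\ref{l: chain} (whose proof is essentially identical to the one given there, with ``$\Irred$'' replaced throughout by ``$\ell$'') combined with Lemma~\ref{l: l dp} yields
\[
\ell(\Sym(b)\wr\Sym(a)) \le \ell(\Sym(b)^a) + \ell(\Sym(a)) \le a\,\ell(\Sym(b)) + \ell(\Sym(a)).
\]
For each symmetric-group factor I would invoke the Cameron--Solomon--Turull bound, which in its sharpest form is $\ell(\Sym(k)) = \lceil 3k/2 \rceil - s_2(k) - 1$, where $s_2(k)$ denotes the number of $1$'s in the binary expansion of $k$.

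It remains to verify the inequality $1 + a\,\ell(\Sym(b)) + \ell(\Sym(a)) < 2\log t$, where $t = (ab)!/(a!(b!)^a)$. For $n = ab$ sufficiently large, Stirling's formula yields $\log t \approx (n-a)\log a + a\log_2 e$, which comfortably dominates the $O(n)$ bound on the left-hand side in every regime (whether $a$, $b$, or both grow). The main obstacle is that for the smallest admissible pairs $(a,b)$---in particular those with $n \in \{6, 8, 9\}$---the two sides are quite close, and the crude bound $\ell(\Sym(k)) \le 3k/2$ used elsewhere in the paper is not sharp enough; these few cases must be handled by invoking the exact CST formula or by direct inspection. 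For instance, when $(a,b)=(2,3)$ one obtains $\Irred(G) \le 1 + 2\cdot\ell(\Sym(3)) + \ell(\Sym(2)) = 1 + 2\cdot 2 + 1 = 6 < 2\log 10 \approx 6.64$, and the remaining small cases are handled analogously.
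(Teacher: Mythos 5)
Your proof is correct, but it takes a genuinely different route from the paper's. The paper's argument is short: it bounds $\Irred(G) \leq \ell(G) \leq \ell(\Sym(n)) \leq \tfrac{3}{2}n$ directly (no stabilizer decomposition), and then uses the GMPS estimate $t = (ab)!/((b!)^a a!) \geq 3^{ab/2}$ for $n = ab \geq 17$ to get $2\log t \geq n\log 3 > \tfrac32 n$, with the handful of remaining cases $n < 17$ checked by machine. You instead bound $\Irred(G) \leq 1 + \ell(G_{\pi_1})$ through a point stabilizer $G_{\pi_1} \leq \Sym(b)\wr\Sym(a)$, decompose $\ell$ of the wreath product as $a\,\ell(\Sym(b)) + \ell(\Sym(a))$, and compare to $2\log t$ via Stirling rather than via the GMPS lemma. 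Both approaches succeed; the paper's is considerably shorter because the GMPS bound $t\geq 3^{n/2}$ does all the heavy lifting, whereas your Stirling-based comparison is self-contained but requires a more careful asymptotic analysis and its own list of small exceptional pairs $(a,b)$. One point to tighten: the claim that $(n-a)\log a + a\log_2 e$ ``comfortably dominates'' the $O(n)$ upper bound is least convincing when $a=2$, where the two sides have leading coefficients $2$ and $\tfrac32$ respectively and the margin is only about $n/2$; this still closes the argument, but it is closer than ``comfortably'' suggests, which is part of why both your proof and the paper's end with an explicit check of small cases.
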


\begin{proof}
By \cite[Lemma 5.6]{gmps} we know that if $a \geq 2$, $b \geq 2$ and $n \geq 17$ then
\[
\frac{(ab)!}{(b!)^a(a!)} \geq 3^{ab/2}.
\]
Hence
\begin{equation}\label{eq:1}
\log t \geq \frac{\log (3)}{2}ab.
\end{equation}

Once again, we use the fact that $\ell(\Sym(n))\leq \frac32n=\frac32ab$.
As $3/2<\log(3)$ for $n\geq 17$, we have
\[ \Irred(G)\leq \ell(G) \leq \frac{3}{2}ab < \log(3)ab\le 2\log t
\] and the result follows. If $n<17$, we check directly that $\frac{3}{2} ab < 2 \log t$, unless $$(a,b)\in \{( 2, 3 ),
( 3, 2 ),
( 4, 2 ),
( 5, 2 ),
( 6, 2 )\}.$$
For these remaining cases, we have computed explicitly the value of $\Sym(n)$ acting on partitions of $\{1,\ldots,n\}$ in  $b$ parts of cardinality $a$ and we have verified that in each case $\Irred(G)<2\log t$.
\end{proof}

\subsection{The symplectic/ orthogonal case}

In this section we will deal with the actions listed at item (2) of Definition~\ref{d: subspace action}. In particular $G$ is almost simple with socle $G_0=\Sp_{2m}(q)$, $q$ a power of $2$. Consulting \cite{bhr, kl}, it is clear that if $H$ is a maximal subgroup of $G$ not containing $G_0$, and $H\cap G_0$ is a subgroup of $M=\Or_{2m}^\pm(q)$, then $H\cap G_0=M$. In light of this the result that we need is the following.

\begin{lem}\label{l: symp orth}
 Let $G$ be almost simple with socle $G_0=\Sp_{2m}(q)$, $q$ a power of $2$. Let $H$ be a subgroup of $G$ such that $H\cap \Sp_{2m}(q)=\Or_{2m}^\pm(q)$, let $\Omega$ be the set of cosets of $H$ in $G$, and write $t=|\Omega|$. Then
 \[
  \Irred(G,\Omega)<\frac{11}{3}\log t.
 \]
\end{lem}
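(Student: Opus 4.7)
The plan is to combine Lemma~\ref{l: chain} with a geometric analysis of the $G_0$-action. Setting $N:=G_0=\Sp_{2m}(q)$, Lemma~\ref{l: chain} gives $\Irred(G)\le\Irred(G_0,\Omega)+\ell(G/G_0)$. Since $G$ is almost simple with socle $G_0$, we have $G/G_0\le\Out(\Sp_{2m}(q))$, and for $q=2^f$ the outer automorphism group has order at most $2f$ (the factor $2$ accounts for the exceptional graph automorphism in the case $m=2$). Hence $\ell(G/G_0)\le 1+\log f$, and the task reduces to bounding $\Irred(G_0,\Omega)$ by something linear in $m$.

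The core step is to identify $\Omega$, as a $G_0$-set, with the set of quadratic forms of type $\epsilon$ on the natural module $V\cong\Fq^{2m}$ whose polarization is the defining symplectic form $\beta$. Fix $Q_1\in\Omega$. For any $Q\in\Omega$, the difference $f_Q:=Q-Q_1$ is additive and satisfies $f_Q(\lambda v)=\lambda^2 f_Q(v)$, because $Q$ and $Q_1$ share the polarization $\beta$. Since $q$ is a power of $2$, the Frobenius $x\mapsto x^2$ is bijective on $\Fq$, so we may define $h_Q\in V^*$ by $h_Q(v):=f_Q(v)^{1/2}$; a short check shows $h_Q$ is $\Fq$-linear. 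Using the injectivity of squaring, an element $g\in\Or^\epsilon(V,Q_1)$ fixes $Q$ if and only if it fixes $h_Q$ under the contragredient action on $V^*$; and by $\Fq$-linearity the pointwise stabilizer in $\Or^\epsilon(V,Q_1)$ of $\{h_{Q_2},\ldots,h_{Q_j}\}$ coincides with the pointwise stabilizer of the $\Fq$-subspace $U_j:=\spann(h_{Q_2},\ldots,h_{Q_j})\subseteq V^*$, with the convention $U_1:=\{0\}$.

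Now, for an irredundant base $[Q_1,\ldots,Q_k]$ of $G_0$ on $\Omega$, the stabilizer chain must strictly decrease at each step; if $U_{j+1}=U_j$, then $h_{Q_{j+1}}\in U_j$ and the corresponding pointwise stabilizers coincide, a contradiction. Therefore $U_{j+1}\supsetneq U_j$ for each $j=1,\ldots,k-1$, giving
$$0=\dim U_1<\dim U_2<\cdots<\dim U_k\le\dim V^*=2m,$$
and hence $k\le 2m+1$. Combining, $\Irred(G)\le 2m+2+\log f$. Since $t=q^m(q^m+\epsilon)/2$, one obtains $\log t\ge 2mf-2$ for all admissible $(m,q)$ (the simplicity of $\Sp_{2m}(q)$ excludes $(m,q)\in\{(1,2),(2,2)\}$), and the target inequality $2m+2+\log f<\tfrac{11}{3}(2mf-2)$ reduces to an elementary case-by-case check. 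The main technical obstacle is the characteristic-two square-root identification of $\Omega$ with a subset of $V^*$, which converts the otherwise unwieldy chain of orthogonal-group stabilizers into a chain of subspace stabilizers whose length is automatically bounded by $\dim V^*+1$; once this is in place, the remaining comparison leaves substantial slack below $\tfrac{11}{3}\log t$.
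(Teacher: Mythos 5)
Your proposal is correct and takes essentially the same route as the paper: you identify $\Omega$ with a set of quadratic forms polarising to the symplectic form, use the characteristic-two square-root trick to encode the "difference" $Q-Q_1$ as a linear object, show that the resulting stabilizer chain matches a strictly increasing chain of $\Fq$-subspaces, and conclude $\Irred(G_0,\Omega)\le 2m+1$ before appending $\ell(G/G_0)$ and comparing against $\log t$. The only cosmetic difference is that you package $Q-Q_1$ as a functional $h_Q\in V^*$, whereas the paper uses the nondegenerate form to rewrite the same datum as a vector $a\in V$ and works with $\cent{G_{\theta_{a_1}}}{a_1+a_i}$; these are dual descriptions of the identical argument, and the paper handles $m=2$ by invoking a separate lemma rather than absorbing the extra graph automorphism into the $\ell(G/G_0)$ bound as you do.
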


The treatment that follows is inspired by \cite[\S 7.7]{dixon_mortimer}, where the case of $\Sp_{2m}(2)$ is considered. Let $e$ and $m$ be positive integers, let $q:=2^e$, let $\mathbb{F}_q$ be the finite field with $q$ elements, and let $V:=\mathbb{F}_q^{2m}$ be the $2m$-dimensional vector space of row vectors over $\mathbb{F}_q$. To start with we adjust notation slightly, and assume that $G$ is simple: let $G:=\Sp_{2m}(q)$ be the symplectic group defined by the symmetric matrix
\[
f:=\begin{pmatrix}
0&I\\
I&0
\end{pmatrix},
\] 
where $0$ and $I$ are the zero and identity $m\times m$-matrices, respectively. In particular, $G$ is the group of invertible matrices preserving the bilinear form $\varphi:V\times V\to \mathbb{F}_q$ defined by $$\varphi(u,v):=ufv^{T},$$ for every $u,v\in V$, that is
\[ G = \left\{g \in \GL_{2m}(q) \mid gfg^{T}=f\right\}.
\]
Note that the bilinear form $\varphi$ is alternating, i.e. for all $u \in V$, we have
\begin{equation}\label{eq:blabla-1}
\varphi(u,u)=0.
\end{equation}
Moreover, since $\Fq$ is of characteristic 2, the form $\varphi$ is symmetric, i.e. for all $u,v \in V$ ,we have
\begin{equation}\label{eq:blabla-3}
\varphi(u,v)= \varphi(v,u).
\end{equation}
Now we let $\Omega$ be the set of quadratic forms $\theta:V\to\mathbb{F}_q$ polarising to $\varphi$. Recall that this means that $\theta:V\to\mathbb{F}_q$ is a function satisfying
\begin{itemize}
\item $\theta(u+v)-\theta(u)-\theta(v)=\varphi(u,v)$, for every $u,v\in V$, and
\item $\theta(cu)=c^2u$, for every $c\in\mathbb{F}_q$ and $u\in V$.
\end{itemize}

Next, consider the matrix 
\[
e:=\begin{pmatrix}
0&I\\
0&0
\end{pmatrix}
\]
and the quadratic form $\theta_0:V\to\mathbb{F}_q$ defined by $$\theta_0(u):=ueu^T,$$
for every $u\in V$. For every $u,v\in V$, we have
\begin{equation}\label{eq:blabla-2}
\begin{split}
\theta_0(u+v)-\theta_0(u)-\theta_0(v)&:=(u+v)e(u+v)^T-ueu^T-vev^T\\
 &=ueu^T+vev^T+uev^T+veu^T-ueu^T-vev^T\\
 &=uev^T+veu^T=uev^T+ue^Tv^T=u(e+e^T)v^T=ufv^T\\
&=\varphi(u,v). 
\end{split}
\end{equation}

In particular, \(\theta_0\) is a quadratic form whose polarisation is the symplectic form $\varphi$ and hence $\theta_0\in \Omega$.

Let $\theta\in \Omega$ and define $\lambda:=\theta-\theta_0$. We have
\begin{align*}
\lambda(u+v)=&\theta(u+v)-\theta_0(u+v)=\theta(u)+\theta(v)+\varphi(u,v)-\theta_0(u)-\theta_0(v)-\varphi(u,v)\\
=&\lambda(u)+\lambda(v),\\
\lambda(cu)=&\theta(cu)-\theta_0(cu)=c^2\theta(u)-c^2\theta_0(u)=c^2\lambda(u),
\end{align*}
for every $u,v\in V$ and for every $c\in\mathbb{F}_q$. Therefore, since $\Fq$ is of characteristic $2$, the function $\lambda:V\to\mathbb{F}_q$ is semilinear and hence there exists a unique $b\in V$ such that
$\lambda(u)=(u\cdot b^T)^2$, for every $u\in V$ (see Lemma \ref{lemma1} for a precise statement). Since $f$ is an invertible matrix, there exists a unique $a\in V$ with $b=af$ and hence
$$\lambda(u)=(ufa^T)^2=\varphi(u,a)^2,$$
for every $u\in V$. Summing up, we have shown that an arbitrary element of $\Omega$ is of the form
$$u\mapsto \theta_0(u)+\varphi(u,a)^2,$$
where $a\in V$. We denote this element of $\Omega$ simply by $\theta_a$. Thus
\begin{equation}\label{eq:blabla}\theta_a(u)=\theta_0(u)+\varphi(u,a)^2,\quad\textrm{for every }u\in V.\end{equation} 
In particular, the elements of $\Omega$ are parametrised by the vectors of $V$. Moreover, if $\theta_{a}=\theta_{a'}$ for some $a, a' \in V$, then $\theta_{a}(u)=\theta_{a'}(u)$ for every $u \in V$ and this implies $\varphi(u, a)= \varphi(u, a')$ for every $u \in V$. Since $\varphi$ is non-degenerate, we obtain $a=a'$. Hence, the set $\Omega$ is in one-to-one correspondence with $V$. This, in particular, yields that $|\Omega|= q^{2m}$.

\begin{lem}
The group $G$ acts on the set $\Omega$.
\end{lem}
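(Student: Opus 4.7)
The plan is to define a right action of $G$ on $\Omega$ by the standard pullback formula, namely
\[
\theta^g(u) := \theta(ug^{-1}), \qquad g \in G,\ \theta \in \Omega,\ u \in V,
\]
and then to check the three things that need checking: (i) $\theta^g$ is a quadratic form; (ii) it polarises to $\varphi$; (iii) $\theta^{gh} = (\theta^g)^h$. I would carry these out in that order.

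For (i), the scaling property is immediate: $\theta^g(cu) = \theta(cug^{-1}) = c^2 \theta(ug^{-1}) = c^2 \theta^g(u)$. The additivity obstruction for $\theta^g$ equals $\theta((u+v)g^{-1}) - \theta(ug^{-1}) - \theta(vg^{-1})$, which by the definition of $\Omega$ equals $\varphi(ug^{-1}, vg^{-1})$, a function of $u$ and $v$; this simultaneously shows that $\theta^g$ is a quadratic form and gives step (ii) once we invoke that $g^{-1}\in G$, so that $\varphi(ug^{-1}, vg^{-1}) = \varphi(u,v)$. For (iii), one just computes
\[
\theta^{gh}(u) = \theta(u(gh)^{-1}) = \theta(uh^{-1}g^{-1}) = \theta^g(uh^{-1}) = (\theta^g)^h(u),
\]
and of course $\theta^1 = \theta$.

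There is no real obstacle here; the only thing to be careful about is the convention, and choosing $ug^{-1}$ rather than $ug$ is what makes the associativity in (iii) work out to a right action consistent with the right-action conventions used elsewhere in the paper. If one preferred to see the action in the coordinates $\theta = \theta_a$ introduced above, a routine manipulation using $g^{-1}f = fg^T$ (which follows from $gfg^T = f$) shows that $\theta_a^g = \theta_{a'}$ for an explicit $a' \in V$ depending linearly on $a$ and on $g$, but this refined description is not required for the lemma as stated and can be postponed to wherever the action of $G$ on $\Omega$ is actually used.
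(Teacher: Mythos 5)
Your proposal is correct and follows essentially the same route as the paper: it defines $\theta^g(u) = \theta(ug^{-1})$, verifies the quadratic-form conditions and that the polarisation is $\varphi$ (using $G$-invariance of $\varphi$), and checks $\theta^{gh} = (\theta^g)^h$ by direct computation. The remarks about the $ug^{-1}$ convention and the coordinate description $\theta_a^g$ are correct side notes but not needed, matching where the paper places that material.
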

\begin{proof}
First, we show that for every $x\in G$ and for every $\theta\in \Omega$, the mapping 
\begin{equation}
\begin{split}\label{eq:blabla1}
\theta^x &\colon V\to\mathbb{F}_q\\
& u \mapsto \theta(ux^{-1})
\end{split}
\end{equation}
 gives rise to an element of $\Omega$. For every $u,v\in V$, we have
\begin{align*}
\theta^x(u+v)&=\theta((u+v)x^{-1})=\theta(ux^{-1}+vx^{-1})=\theta(ux^{-1})+\theta(vx^{-1})+\varphi(ux^{-1},vx^{-1})\\
&=\theta(ux^{-1})+\theta(vx^{-1})+\varphi(u,v)=\theta^{x}(u)+\theta^x(v)+\varphi(u,v).
\end{align*}
(Observe that in the fourth equality we used the fact that $x\in G$ and hence $x$ preserves the bilinear form $\varphi$.) Moreover,
\begin{align*}
\theta^x(cu)&=\theta((cu)x^{-1})=\theta(c(ux^{-1}))=c^2\theta(ux^{-1})=c^2\theta^x(u).
\end{align*}
Therefore, $\theta^x\in \Omega$. Finally, for every $x,y\in G$, $\theta\in \Omega$ and $u\in V$, we have
$$(\theta^{x})^y(u)=\theta^x(uy^{-1})=\theta((uy^{-1})x^{-1})=\theta(u(xy)^{-1})=\theta^{xy}(u).$$
Therefore, $(\theta^{x})^y=\theta^{xy}$ and hence $G$ defines a genuine right action on $\Omega$.
\end{proof}
Before continuing our discussion, we gather some information on $G$. Let $a\in V$, we define the mapping 

\begin{equation}\label{eq:blabla2}
\begin{split} 
t_a \colon &V\to V\\
&u \mapsto u+\varphi(u,a)a.
\end{split}
\end{equation}
Such a function is called a {\em transvection}. For every $u,v \in V$ and $c \in \Fq$ we have
\begin{align*}
(u+v)t_a & = (u+v) + \varphi(u+v,a)a = (u + \varphi(u,a)a) + (v + \varphi(v,a)a) = (u)t_a +(v)t_a; \\
(cu)t_a & = cu + \varphi(cu,a)a = c(u+\varphi(u,a)a)= c(u) t_a.
\end{align*}
Hence $t_a$ is linear. Moreover, for every $u\in V$ we have
\begin{align*}
(u)t_a^2&=(u+\varphi(u,a)a)t_a=u+\varphi(u,a)a+\varphi(u+\varphi(u,a)a,a)a\\
&=u+\varphi(u,a)a+\varphi(u,a)a+\varphi(u,a)\varphi(a,a)a=u+2\varphi(u,a)a+\varphi(u,a)\varphi(a,a)a\\
&=u+\varphi(u,a)\varphi(a,a)a \underset{\eqref{eq:blabla-1}}{=} u,
\end{align*}
where in the second-last equality we use the fact that the characteristic of $\Fq$ is $2$. This shows that $t_a$ is an involution. Finally, for every $u,v\in V$, we have
\begin{align*}
\varphi((u)t_a,(v)t_a)&=\varphi(u+\varphi(u,a)a,v+\varphi(v,a)a)\\
&=\varphi(u,v)+\varphi(v,a)\varphi(u,a)+\varphi(u,a)\varphi(a,v)+\varphi(u,a)\varphi(v,a)\varphi(a,a)\\
&\underset{\eqref{eq:blabla-3}}{=}\varphi(u,v) + 2 \varphi(v,a)\varphi(u,a) + \varphi(u,a)\varphi(v,a)\varphi(a,a) \\
&= \varphi(u,v) + \varphi(u,a)\varphi(v,a)\varphi(a,a)\\
&\underset{\eqref{eq:blabla-1}}{=}\varphi(u,v).
\end{align*}
Therefore $t_a$ preserves $\varphi$ and hence lies in the symplectic group $G$.

We are now interested in computing the image of $\theta_a$ under the transvection $t_c$. First recall that, in a field of characteristic 2, since $x= -x$ for every $x \in \Fq$, the square root $\sqrt{\cdot} \colon \Fq \to \Fq$ is a well-defined map. Moreover, for every $a,b,x,y \in \Fq$ such that $x=a^2$ and $y=b^2$ we have $(\sqrt{x}+\sqrt{y})^2= (a+b)^2= a^2+b^2= x+y$, which implies $\sqrt{x}+\sqrt{y}=\sqrt{x+y}$. Moreover, recall that $\theta_a$ is a quadratic form polarising to $\varphi$ and that $t_a$ is an involution, in particular $t_c=t_c^{-1}$. By using these facts, given $v\in V$, we have
\begin{align*}
\theta_a^{t_c}(u)&\underset{\eqref{eq:blabla1}}{=}\theta_a(ut_c^{-1})=\theta_a(ut_c)\underset{\eqref{eq:blabla2}}{=} \theta_a(u+\varphi(u,c)c)\\
&=\theta_a(u)+\theta_a(\varphi(u,c)c)+\varphi(u,\varphi(u,c)c)\\
&=\theta_a(u)+\varphi(u,c)^2\theta_a(c)+\varphi(u,c)^2\\
&=\theta_a(u)+(\theta_a(c)+1)\varphi(u,c)^2=\theta_a(u)+(\sqrt{\theta_a(c)+1}\varphi(u,c))^2\\
&=\theta_a(u)+\varphi(u,(\sqrt{\theta_a(c)}+1)c)^2\underset{\eqref{eq:blabla}}{=}\theta_0(u)+\varphi(u,a)^2+\varphi(u,(\sqrt{\theta_a(c)}+1)c)^2\\
&=\theta_0(u)+\varphi(u,a+(\sqrt{\theta_a(c)}+1)c)^2\\
&\underset{\eqref{eq:blabla}}{=}\theta_{a+(\sqrt{\theta_a(c)}+1)c}(u).
\end{align*}
From this, we deduce 
\begin{equation}\label{eq:0}
\theta_a^{t_c}=
\theta_{a+(\sqrt{\theta_a(c)}+1)c}.
\end{equation}

We now recall some facts about Galois theory. For a reference see \cite[Chapter VI]{lang}. The Frobenius mapping $\phi \colon x \mapsto x^2$ from $\mathbb{F}_q$ to itself is a generator of the Galois group of $\mathbb{F}_q$ over $\mathbb{F}_2$. There exists a well-defined $\mathbb{F}_2$-linear {\em trace} mapping $\mathrm{Tr}:\mathbb{F}_q\to\mathbb{F}_2$. In what follows, we need only two basic facts about $\mathrm{Tr}$: first, $\mathrm{Tr}$ is surjective and second, from Hilbert's 90 Theorem,  the kernel of $\mathrm{Tr}$ consists of the set $\{x^2+x\mid x\in\mathbb{F}_q\}$ and has cardinality $q/2$.

Define
\begin{align*}
\Omega^+:=\{\theta_a\mid \mathrm{Tr}(\theta_0(a))=0\},\\
\Omega^-:=\{\theta_a\mid\mathrm{Tr}(\theta_0(a))=1\}.
\end{align*}

Observe that the above definition is a generalization of the definition of $\Omega^+$ and $\Omega^-$ in \cite[Corollary 7.7 A]{dixon_mortimer}. Indeed, if $q=2$, then the Galois group is the trivial group and hence the trace map is the identity.


Let $N:=\langle t_a\mid a\in V\rangle$ be the subgroup of $G$ generated by the transvections. Observe that, for all $a,u \in V$ and $x \in G$, we have
\[ (u)x^{-1}t_ax= (ux^{-1}+\varphi(ux^{-1},a)a)x= u+\varphi(u,ax)ax = (u)t_{ax}.
\]In particular, this shows that $N\unlhd G$.
\begin{lem}\label{report} The sets $\Omega^+$ and $\Omega^-$ are $N$-orbits on $\Omega$, with
$$|\Omega^+|=\frac{q^m(q^m+1)}{2},\quad |\Omega^-|=\frac{q^m(q^m-1)}{2}.$$
These are also orbits for $G$.
\end{lem}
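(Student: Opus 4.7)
My plan is to compute $|\Omega^+|$ and $|\Omega^-|$ directly, verify that each is preserved by every transvection, and then deduce via orbit--stabiliser that they are full orbits for both $N$ and $G$.

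For the sizes, the parametrisation $a\mapsto\theta_a$ is a bijection $V\to\Omega$, so $|\Omega^+|+|\Omega^-|=q^{2m}$. I then introduce the nontrivial additive character $\psi\colon\mathbb{F}_q\to\{\pm 1\}$ with $\psi(c)=(-1)^{\mathrm{Tr}(c)}$ and note that $\theta_0(a)=\sum_{i=1}^m a_ia_{m+i}$ factorises the sum as
\[
|\Omega^+|-|\Omega^-|=\sum_{a\in V}\psi(\theta_0(a))=\prod_{i=1}^m\sum_{x,y\in\mathbb{F}_q}\psi(xy)=q^m,
\]
each inner sum equalling $q$ by orthogonality (the $x=0$ contribution, together with $\sum_y\psi(xy)=0$ for each $x\neq 0$). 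Solving the resulting pair of linear equations yields the claimed cardinalities.

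For the invariance, by~\eqref{eq:0} the transvection $t_c$ sends $\theta_a$ to $\theta_{a+\lambda c}$ with $\lambda=\sqrt{\theta_a(c)}+1$. Using $\theta_a(c)=\theta_0(c)+\varphi(a,c)^2$ and the characteristic-$2$ identity $\sqrt{x+y}=\sqrt{x}+\sqrt{y}$, one gets $\lambda=\sqrt{\theta_0(c)}+\varphi(a,c)+1$ and $\lambda^2=\theta_0(c)+\varphi(a,c)^2+1$. Expanding
\[
\theta_0(a+\lambda c)=\theta_0(a)+\lambda^2\theta_0(c)+\lambda\varphi(a,c),
\]
applying $\mathrm{Tr}$ and using $\mathrm{Tr}(x^2)=\mathrm{Tr}(x)$, both of the extra contributions $\mathrm{Tr}(\lambda^2\theta_0(c))$ and $\mathrm{Tr}(\lambda\varphi(a,c))$ collapse to $\mathrm{Tr}(\sqrt{\theta_0(c)}\,\varphi(a,c))$, and so their sum vanishes in characteristic~$2$. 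Hence $\mathrm{Tr}(\theta_0(a+\lambda c))=\mathrm{Tr}(\theta_0(a))$, and since $N$ is generated by transvections it preserves both $\Omega^+$ and $\Omega^-$.

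For transitivity, I identify the $G$-stabiliser of $\theta_0$. In characteristic $2$, any linear transformation preserving a quadratic form also preserves its polar bilinear form, so this stabiliser equals $\Or(\theta_0)$; and $\theta_0$ vanishes on the standard hyperbolic basis, hence is of plus type, so the stabiliser is $\Or_{2m}^+(q)$, of index $q^m(q^m+1)/2=|\Omega^+|$ in $G$. By the invariance step the $G$-orbit of $\theta_0$ lies inside $\Omega^+$, and by orbit--stabiliser it has exactly this cardinality, so equals $\Omega^+$; the parallel argument starting from any element of $\Omega^-$, whose stabiliser is $\Or_{2m}^-(q)$ of index $q^m(q^m-1)/2$, shows that $\Omega^-$ is the other $G$-orbit. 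Since transvections generate $\Sp_{2m}(q)$ we have $N=G$, so the $N$-orbits coincide with the $G$-orbits. The main obstacle is the trace bookkeeping in the invariance step; the key insight making it tractable is that both extra terms reduce to the single quantity $\mathrm{Tr}(\sqrt{\theta_0(c)}\,\varphi(a,c))$ and therefore cancel.
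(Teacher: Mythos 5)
Your proof is correct, and it takes a genuinely different route from the paper in two of its three parts. The paper computes $|\Omega^{\pm}|$ by writing $\mathrm{Tr}(\theta_0(a))=\sum_{i=1}^m\mathrm{Tr}(a_ia_{i+m})$ and running an induction on $m$; you instead evaluate the character sum $\sum_{a\in V}(-1)^{\mathrm{Tr}(\theta_0(a))}=q^m$ by orthogonality. The Gauss-sum approach is slicker and scales immediately, whereas the paper's inductive count is more elementary and self-contained. Your invariance step is essentially the paper's computation, repackaged cleanly around the observation that both correction terms collapse to the single quantity $\mathrm{Tr}\bigl(\sqrt{\theta_0(c)}\,\varphi(a,c)\bigr)$. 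The biggest divergence is in transitivity: the paper never appeals to the order of an orthogonal group or to the fact that transvections generate $\Sp_{2m}(q)$; instead it proves something sharper, namely that $\Omega^+=\{\theta_0^{t_c}\mid c\in V\}$ and $\Omega^-=\{\theta_{\varepsilon}^{t_c}\mid c\in V\}$, constructing the required $c$ explicitly via Hilbert 90, and then deduces $G$-transitivity from $N\unlhd G$ together with $|\Omega^+|\ne|\Omega^-|$. Your argument imports the index $|\Sp_{2m}(q):\Or_{2m}^{\epsilon}(q)|=q^m(q^m+\epsilon)/2$ and the generation of $\Sp_{2m}(q)$ by transvections; both are standard, but this makes your proof less self-contained. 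It also gives less: the paper's explicit single-transvection transitivity is a stronger structural statement that happens to be useful to have. If you wish to keep your route, you should note that the set $\{t_a\mid a\in V\}$ only consists of the transvections with ``coefficient $1$,'' and that in characteristic $2$ these nevertheless generate all transvections (since scaling $a$ by $\mu$ replaces the coefficient by $\mu^2$ and squaring is surjective on $\Fq$) -- otherwise $N=\Sp_{2m}(q)$ is not quite immediate from the generation theorem as usually stated.
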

\begin{proof} We first prove that $$|\Omega^+|=\frac{q^m(q^m+1)}{2},\quad |\Omega^-|=\frac{q^m(q^m-1)}{2}.$$
Clearly, it suffices to prove the first equality because the second follows from the equality $$|\Omega^-|=|\Omega|-|\Omega^+|=q^{2m}-|\Omega^+|.$$ By definition, $\theta_0(a)=aea^T$. Moreover, by using the canonical basis of $V$, we have 
$$\mathrm{Tr}\left(\theta_0\left(\sum_{i=1}^{2m}a_ie_i\right)\right)=
\mathrm{Tr}\left(\sum_{i=1}^{m}a_ia_{i+m}\right)=\sum_{i=1}^m\mathrm{Tr}(a_ia_{i+m}).$$
Using this equation we may compute the cardinality of $|\Omega^+|$ arguing inductively on $m$. When $m=1$, we have $\mathrm{Tr}(a_1a_2)=0$. When $a_1=0$, we have $q$ solutions for $a_2$; however, for every $a_1\in \mathbb{F}_q\setminus\{0\}$, we have $q/2$ solutions for $a_2$. Therefore when $m=1$, we have $$q+(q-1)\frac{q}{2}=\frac{q^2}{2}+\frac{q}{2}=\frac{q(q+1)}{2}$$ solutions to the equation $\mathrm{Tr}(a_1a_2)=0$ and we have $\frac{q(q-1)}{2}$ solutions to the equation $\mathrm{Tr}(a_{1}a_2)=1$. Arguing inductively, we may assume that 
$$\sum_{i=1}^{m-1}\mathrm{Tr}(a_ia_{i+m})$$
is equal to 0 for $q^{m-1}(q^{m-1}+1)/2$ choices of $(a_1, \dots, a_{m-1})$ and is equal to 1 for $q^{m-1}(q^{m-1}-1)/2$ choices of $a_1, \dots, a_{m-1}$. Now, $$\sum_{i=1}^{m}\mathrm{Tr}(a_ia_{i+m})=\sum_{i=1}^{m-1}\mathrm{Tr}(a_ia_{i+m})+\mathrm{Tr}(a_ma_{2m})$$ has value $0$ if and only if $\sum_{i=1}^{m-1}\mathrm{Tr}(a_ia_{i+m})$ and $\mathrm{Tr}(a_ma_{2m})$ have the same value. Therefore altogether the number of solutions of $\sum_{i=1}^{m}\mathrm{Tr}(a_ia_{i+m})=0$ is
$$\frac{q^{m-1}(q^{m-1}+1)}{2}\frac{q(q+1)}{2}+\frac{q^{m-1}(q^{m-1}-1)}{2}\frac{q(q-1)}{2}=\frac{q^{m}(q^{m}+1)}{2}.$$

Next, we prove that $\Omega^+$ and $\Omega^-$ are $N$-orbits. We start by considering $\Omega^+$. We first prove that $\Omega^+$ is $N$-invariant. To this end, it suffices to show that if  $\theta_a\in \Omega^+$ and $c\in V$, then $\theta_a^{t_c}\in \Omega^+$. In other words, using~\eqref{eq:0}, if $\mathrm{Tr}(\theta_0(a))=0$, then $\mathrm{Tr}(\theta_0(a+(\sqrt{\theta_a(c)}+1)c))=0$. So let $\mathrm{Tr}(\theta_0(a))=0$ and recall that $\mathrm{Tr}$ is a linear map. We have

\begin{equation}\label{blabla1}
\begin{split} 
\mathrm{Tr}(\theta_0(a+(\sqrt{\theta_a(c)}+1)c))
&\underset{\eqref{eq:blabla-2}}{=}\mathrm{Tr}\left(\theta_0(a)+\theta_0((\sqrt{\theta_a(c)}+1)c)+\varphi(a,(\sqrt{\theta_a(c)}+1)c)\right)\\
&=\mathrm{Tr}\left(\theta_0(a)\right)+\mathrm{Tr}\left(\theta_0((\sqrt{\theta_a(c)}+1)c)\right)+\mathrm{Tr}\left(\varphi(a,(\sqrt{\theta_a(c)}+1)c)\right)\\
&=\mathrm{Tr}\left(\theta_0((\sqrt{\theta_a(c)}+1)c)\right)+\mathrm{Tr}\left(\varphi(a,(\sqrt{\theta_a(c)}+1)c)\right)\\
&=\mathrm{Tr}\left((\sqrt{\theta_a(c)}+1)^2\theta_0(c)\right)+\mathrm{Tr}\left(\varphi(a,(\sqrt{\theta_a(c)}+1)c)\right)\\
&=\mathrm{Tr}\left((\theta_a(c)+1)\theta_0(c)\right)+\mathrm{Tr}\left((\sqrt{\theta_a(c)}+1)\varphi(a,c)\right)\\
&=\mathrm{Tr}\left(\theta_a(c)\theta_0(c)\right)+\mathrm{Tr}\left(\theta_0(c)\right)+\mathrm{Tr}(\sqrt{\theta_a(c)}\varphi(a,c))+\mathrm{Tr}(\varphi(a,c)).
\end{split}
\end{equation}

Using $\theta_a(c)=\theta_0(c)+\varphi(a,c)^2$ and $\mathrm{Tr}(x^2)=\mathrm{Tr}(x)$ for every $x\in \mathbb{F}_q$, we obtain
\begin{equation}\label{blabla2}
\begin{split}
\mathrm{Tr}\left(\theta_a(c)\theta_0(c)\right)&=\mathrm{Tr}(\theta_0(c)^2)+\mathrm{Tr}(\varphi(a,c)^2\theta_0(c))\\ 
&=\mathrm{Tr}(\theta_0(c))+\mathrm{Tr}(\varphi(a,c)^2\theta_0(c)),
\end{split}
\end{equation}
\begin{equation}\label{blabla3}
\begin{split}
\mathrm{Tr}(\sqrt{\theta_a(c)}\varphi(a,c))&=\mathrm{Tr}(\sqrt{\theta_0(c)}\varphi(a,c))+\mathrm{Tr}(\varphi(a,c)^2)\\ 
 &=\mathrm{Tr}(\sqrt{\theta_0(c)}\varphi(a,c))+\mathrm{Tr}(\varphi(a,c)).
\end{split}
\end{equation}
Putting \eqref{blabla2} and \eqref{blabla3} into \eqref{blabla1}, we obtain 
\begin{align*}
\mathrm{Tr}(\theta_0(a+(\sqrt{\theta_a(c)}+1)c))= \,&\mathrm{Tr}(\theta_0(c))+\mathrm{Tr}(\varphi(a,c)^2\theta_0(c))+ \mathrm{Tr}\left(\theta_0(c)\right)\\
&+ \mathrm{Tr}(\sqrt{\theta_0(c)}\varphi(a,c))+\mathrm{Tr}(\varphi(a,c)) + \mathrm{Tr}(\varphi(a,c))\\
=&\,0.
\end{align*}

As $\Omega^-=\Omega\setminus\Omega^+$, we obtain that $\Omega^-$ is also $N$-invariant.

Next, we show that $\Omega^+$ is an $N$-orbit. Actually, we prove something stronger, we show that
$$\Omega^+=\{\theta_0^{t_c}\mid c\in V\}.$$
For every $a\in V$ with $\mathrm{Tr}(\theta_0(a))=0$, we need to show that there exists $c\in V$ such that
\begin{equation}\label{eq:11}
\theta_a=\theta_0^{t_c}.
\end{equation}
If $\theta_0(a)=0$, then we may take $c:=a$ and~\eqref{eq:0} yields
$$\theta_0^{t_a}=\theta_{0+(\sqrt{\theta_0(a)}+1)a}=\theta_{a}$$
and we are finished. Suppose $\theta_0(a)\ne 0$.
Since $\mathrm{Tr}(\theta_0(a))=0$, from Hilbert's 90 theorem, there exists $x\in \mathbb{F}_q$ with $\theta_0(a)=x+x^2$. Since $\theta_0(a)\ne 0$, we have $x\ne 0$.
Let $y\in \mathbb{F}_q\setminus\{0\}$ with $y^2=x$ and set $c:=y^{-1}a$. Thus
\begin{align*}
\sqrt{\theta_0(c)}+1&=\sqrt{\theta_0(y^{-1}a)}+1=\sqrt{y^{-2}\theta_0(a)}+1=y^{-1}\sqrt{\theta_0(a)}+1\\
&=y^{-1}\sqrt{x+x^2}+1=y^{-1}(y+y^2)+1=y.
\end{align*}
From~\eqref{eq:0}, we have
\begin{align*}
\theta_0^{t_c}
&=\theta_{0+(\sqrt{\theta_0(c)}+1)c}=\theta_{yc}=\theta_{yy^{-1}a}=\theta_a.
\end{align*}

Next, we show that $\Omega^-$ is an $N$-orbit. Actually, we prove something stronger, we show that
$$\Omega^-=\{\theta_\varepsilon^{t_c}\mid c\in V\}.$$ First, we select a distinguished element of $\Omega^-$. Let $\epsilon\in\mathbb{F}_q$ with $\mathrm{Tr}(\epsilon)=1$ and set $\varepsilon:=\epsilon e_1+e_{m+1}$, where $(e_i)_{i\in \{1,\ldots,2m\}}$ is the standard basis of $V$. Since $\theta_0(\epsilon e_1)=0=\theta_0(e_{m+1})$, we have $$\theta_0(\varepsilon)=\theta_0(\epsilon e_1)+\theta_0(e_{m+1})+\varphi(\epsilon e_1,e_{m+1})=\epsilon\varphi(e_1,e_{m+1})=\epsilon$$ and hence $\mathrm{Tr}(\theta_0(\varepsilon))=\mathrm{Tr}(\epsilon)=1$. Therefore, $\theta_\varepsilon\in \Omega^-$.

 For every $a\in V$ with $\mathrm{Tr}(\theta_0(a))=1$, we need to show that there exists $c\in V$ such that
\begin{equation}\label{eq:22}
\theta_a=\theta_\varepsilon^{t_c}.
\end{equation}
If $\theta_\varepsilon(a+\varepsilon)=0$, then we may take $c:=a+\varepsilon$ and~\eqref{eq:0} yields
$$\theta_\varepsilon^{t_{c}}=\theta_{\varepsilon+(\sqrt{\theta_\varepsilon(c)}+1)c}=\theta_{\varepsilon+c}=\theta_a$$
and we are finished. Suppose $\theta_\varepsilon(a+\varepsilon)\ne 0$. We have
\begin{align*}
\theta_\varepsilon(a+\varepsilon)&\underset{\eqref{eq:blabla}}{=}\theta_0(a+\varepsilon)+\varphi(a+\varepsilon,\varepsilon)^2= \theta_0(a+\varepsilon)+\varphi(a,\varepsilon)^2 + \varphi(\varepsilon,\varepsilon)^2\\
&\underset{\eqref{eq:blabla-1}}{=}\theta_0(a+\varepsilon)+\varphi(a,\varepsilon)^2\\
&\underset{\eqref{eq:blabla-2}}{=}\theta_0(a)+\theta_0(\varepsilon)+\varphi(a,\varepsilon)+\varphi(a,\varepsilon)^2.
\end{align*}
Since $\mathrm{Tr}(\theta_0(a))=1=\mathrm{Tr}(\theta_0(\varepsilon))$, by using the previous equality, we deduce that $$\mathrm{Tr}(\theta_\varepsilon(a+\varepsilon))=0.$$
From Hilbert's 90 theorem, there exists $x\in \mathbb{F}_q$ with $\theta_\varepsilon(a+\varepsilon)=x+x^2$. Since $\theta_\varepsilon(a+\varepsilon)\ne 0$, we have $x\ne 0$.
Let $y\in \mathbb{F}_q\setminus\{0\}$ with $y^2=x$ and set $c:=y^{-1}(a+\varepsilon)$. Thus
\begin{align*}
\sqrt{\theta_\varepsilon(c)}+1&=\sqrt{\theta_\varepsilon(y^{-1}(a+\varepsilon))}+1=\sqrt{y^{-2}\theta_\varepsilon(a+\varepsilon)}+1=y^{-1}\sqrt{\theta_\varepsilon(a+\varepsilon)}+1\\
&=y^{-1}\sqrt{x+x^2}+1=y^{-1}(y+y^2)+1=y.
\end{align*}
From~\eqref{eq:0}, we have
\begin{align*}
\theta_\varepsilon^{t_c}
&=\theta_{\varepsilon+(\sqrt{\theta_\varepsilon(c)}+1)c}=\theta_{\varepsilon+yc}=\theta_{\varepsilon+yy^{-1}(a+\varepsilon)}=\theta_a.
\end{align*}

The fact that $\Omega^-$ and $\Omega^+$ are also $G$-orbits follows from the fact that $N \unlhd G$ and $\left|\Omega^+\right|\ne \left|\Omega^-\right|$.
\end{proof}

We now compute the maximum length of a stabilizer chain in these two actions. Let $\epsilon\in \{+,-\}$ (we deal simultaneously with both cases). 
Let $\{\theta_{a_1},\theta_{a_2},\ldots,\theta_{a_k}\}$ be a subset of $\Omega^\epsilon$ such that the corresponding chain of stabilizers is strictly decreasing.
Without loss of generality, we may suppose that $a_1=0$ when $\epsilon=+$ and $a_1=\varepsilon$ when $\epsilon=-$. (Recall that $\varepsilon$ is an element of $V$ for which $\theta_\varepsilon$ is a distinguished element of $\Omega^-$; the definition of $\varepsilon$ was given in the proof of the previous lemma.) 

Let us define
$$\cent {G_{\theta_{a_1}}}{a_1+a_i}=\{x\in G_{\theta_{a_1}}\mid (a_1+a_i)x=a_1+a_i\},$$
that is the set of matrices in $G_{\theta{a_1}}$ fixing the vector $a_1+a_i\in V$.
\begin{lem}
For every $i\in \{2,\ldots,k\}$,
\begin{equation}\label{eq:-1}G_{\theta_{a_1}}\cap G_{\theta_{a_i}}=\cent {G_{\theta{a_1}}}{a_1+a_i},
\end{equation}
\end{lem}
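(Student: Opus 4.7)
The plan is to reduce the condition that $x$ fixes both quadratic forms to a single linear condition on $a_1+a_i$, using the fact that in characteristic $2$ the Frobenius $x\mapsto x^2$ is additive.

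First, I would observe that, directly from the definition \eqref{eq:blabla} and the bilinearity of $\varphi$, for every $u\in V$ one has
\[
\theta_{a_i}(u)-\theta_{a_1}(u)=\varphi(u,a_i)^2+\varphi(u,a_1)^2=\bigl(\varphi(u,a_i)+\varphi(u,a_1)\bigr)^2=\varphi(u,a_1+a_i)^2,
\]
so $\theta_{a_i}(u)=\theta_{a_1}(u)+\varphi(u,a_1+a_i)^2$. This is the key identity that links the two forms by a single ``polarizing vector'' $a_1+a_i$.

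Next, I would take $x\in G_{\theta_{a_1}}$ and compute $\theta_{a_i}^x$ using the definition \eqref{eq:blabla1} of the action together with the fact that $x\in G=\Sp_{2m}(q)$ preserves $\varphi$. Explicitly,
\[
\theta_{a_i}^x(u)=\theta_{a_i}(ux^{-1})=\theta_{a_1}(ux^{-1})+\varphi(ux^{-1},a_1+a_i)^2=\theta_{a_1}(u)+\varphi(u,(a_1+a_i)x)^2,
\]
where the last equality uses $x\in G_{\theta_{a_1}}$ and $\varphi(ux^{-1},v)=\varphi(u,vx)$ for $x\in G$. Thus $\theta_{a_i}^x=\theta_{a_i}$ is equivalent to
\[
\varphi(u,(a_1+a_i)x)^2=\varphi(u,a_1+a_i)^2\quad\text{for every }u\in V.
\]

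Finally, I would conclude by taking square roots (well-defined and injective in characteristic $2$) and invoking the non-degeneracy of $\varphi$: the displayed identity forces $\varphi(u,(a_1+a_i)x-(a_1+a_i))=0$ for every $u\in V$, hence $(a_1+a_i)x=a_1+a_i$. Conversely, if $x\in G_{\theta_{a_1}}$ satisfies $(a_1+a_i)x=a_1+a_i$, then reading the computation backwards gives $\theta_{a_i}^x=\theta_{a_i}$. This establishes \eqref{eq:-1}. The argument is entirely computational and poses no real obstacle; the only point to keep track of carefully is the characteristic-$2$ arithmetic that converts the additive identity $\varphi(u,a_i)^2+\varphi(u,a_1)^2=\varphi(u,a_1+a_i)^2$ into a single linear constraint.
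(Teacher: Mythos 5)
Your proof is correct and takes essentially the same approach as the paper: reduce the two quadratic-form conditions to the single linear condition $\varphi(u,(a_1+a_i)x)^2=\varphi(u,a_1+a_i)^2$ via characteristic-$2$ arithmetic and $G$-invariance of $\varphi$, then conclude by non-degeneracy. Your presentation is slightly more streamlined (you isolate the identity $\theta_{a_i}=\theta_{a_1}+\varphi(\cdot,a_1+a_i)^2$ and obtain the equivalence in one chain of equalities, whereas the paper proves the two inclusions separately, with the reverse inclusion expanded at greater length), but the underlying argument is the same.
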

\begin{proof} First observe that, given $j\in \{1,\ldots,k\}$, $x \in G_{\theta_{a_j}}$ if and only if for every $u \in V $ we have
\begin{equation}\label{eq:blabla20} \theta_0(ux^{-1})+\varphi(ux^{-1},a_j)^2= \theta_0(u)+\varphi(u,a_j)^2. 
\end{equation}
Fix $i\in \{2,\ldots,k\}$ and let $x \in G_{\theta_{a_1}} \cap G_{\theta_{a_i}}$. Then, by applying \eqref{eq:blabla20} with $j:=1$ and with $j:=i$ and by using the fact that the characteristic of $\Fq$ is $2$, we obtain that, for every $u \in V$,
\[ \varphi(ux^{-1}, a_1+a_i)^2 = \varphi(u, a_1+a_i)^2.
\] Since $\varphi$ is $G$-invariant, then $\varphi(ux^{-1}, a_1+a_i)^2 = \varphi(u, (a_1+a_i)x)^2$. Therefore, as $\varphi$ is non-degenerate, $(a_1+a_i)x=a_1+a_i$ and hence $x \in \cent {G_{\theta_{a_1}}}{a_1+a_i}$. 

Conversely, let $x \in \cent {G_{\theta_{a_1}}}{a_1+a_i}$. Then $x \in G_{\theta_{a_1}}$ and $(a_1+a_i)x= a_1+a_i$. We need to show that $x \in G_{\theta_{a_i}}$. Note that \eqref{eq:blabla20} applied with $j=1$ and the fact that we are in characteristic 2 imply that
\begin{equation}\label{eq:blabla30}
\theta_0(ux^{-1})= \theta_0(u)+\varphi(u,a_1)^2+\varphi(ux^{-1},a_1)^2.
\end{equation}
 For every $u \in V$ we have:
\begin{align*}
\theta_{a_i}^x(u) & \underset{\eqref{eq:blabla1}}{=} \theta_{a_i}(ux^{-1}) \underset{\eqref{eq:blabla}}{=} \theta_0(ux^{-1})+\varphi(ux^{-1},a_i)^2\\
&\underset{\eqref{eq:blabla30}}{=} \theta_0(u) +\varphi(ux^{-1},a_1)^2+\varphi(u,a_1)^2+\varphi(ux^{-1},a_i)^2 \\
&= \theta_0(u)+\varphi(ux^{-1},a_1+a_i)^2+\varphi(u,a_1)^2 \\
& =\theta_0(u)+\varphi(u,(a_1+a_i)x)^2+\varphi(u,a_i)^2 \\
& = \theta_0(u)+\varphi(u,a_1+a_i)^2+\varphi(u,a_1)^2 \\
&= \theta_0(u)+\varphi(u,a_1)^2+\varphi(u,a_i)^2 + \varphi(u,a_1)^2\\
&= \theta_0(u)+2 \varphi(u,a_1)^2+\varphi(u,a_i)^2\\
&= \theta_0(u)+\varphi(u,a_i)^2 \underset{\eqref{eq:blabla}}{=} \theta_{a_i}(u).
\end{align*}
Observe that in the sixth and seventh equalities we use, respectively, the fact that $x$ preserves $\varphi$ and the fact that $x$ fixes the vector $(a_1+a_i)$.
Hence $x \in G_{\theta_{a_i}}$ and therefore $x \in G_{\theta_{a_1}} \cap G_{\theta_{a_i}}$.
\end{proof}
For simplicity, for each $i\in \{1,\ldots,k\}$, write $G_i:=G_{\theta_{a_1}}\cap\cdots \cap G_{\theta_{a_i}}$ and, for each $i\in \{2,\ldots,k\}$, write $b_i:=a_1+a_i$. From~\eqref{eq:-1}, the strictly decreasing sequence
$$G_1>G_2>\cdots>G_{k-1}>G_k.$$
equals
\begin{align*}
G_1>&
\cent {G_1}{b_2}
=
\cent{G_1}{\spanq{b_2}}
>
\cent{G_1}{b_2,b_3}=
\cent{G_1}{\spanq{b_2,b_3}}>\cdots\\
>&
\cent {G_1}{b_2,\ldots,b_{k-1}}=\cent{G_1}{\spanq{ b_2,\ldots,b_{k-1}}}>
\cent {G_1}{b_2,\ldots,b_k}=\cent{G_1}{\spanq{ b_2,\ldots,b_k}}.
\end{align*}
Here, if $v_1,\dots, v_j\in V$, then we write $\spanq{v_1,\dots, v_j}$ to denote the $\Fq$-vector space generated by the $v_1,\dots, v_j$. We obtain that
$$0<\spanq{ b_2}<\spanq{ b_2,b_3}<\cdots<\spanq{ b_2,\ldots,b_k}\le V.$$

Observe that a strict inclusion in the above chain implies that the dimension has to go down by at least one at each step. Therefore $k\le 1+2m$. Thus, we have proved the following lemma.

\begin{lem}\label{l: sp stab} Let $G := \Sp_{2m}(q)$ act on $\Omega^{\epsilon}$, where $\epsilon \in \{+,-\}$. Then $\Irred(G, \Omega^{\epsilon}) \leq 1+2m$. 
\end{lem}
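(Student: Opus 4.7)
The plan is to convert an irredundant stabilizer chain for the action of $G$ on $\Omega^\epsilon$ into a strictly ascending chain of $\Fq$-subspaces of $V$, from which the bound follows immediately once one recalls $\dim_\Fq V = 2m$.

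First I would take a sequence $[\theta_{a_1}, \theta_{a_2}, \ldots, \theta_{a_k}]$ in $\Omega^\epsilon$ realising a strictly decreasing stabilizer chain, so that $k = \Irred(G, \Omega^\epsilon)$. Since by Lemma \ref{report} the group $G$ is transitive on $\Omega^\epsilon$, I can freely translate by a suitable element of $G$ to normalise the first point: take $a_1 = 0$ when $\epsilon = +$, and $a_1 = \varepsilon$ when $\epsilon = -$. This does not change $k$, and it cleans up what follows.

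Next, setting $b_i := a_1 + a_i$ for $i \geq 2$, I would apply equation \eqref{eq:-1} repeatedly to rewrite every partial intersection as a pointwise centralizer inside $G_{\theta_{a_1}}$:
\[
\bigcap_{i=1}^{j} G_{\theta_{a_i}} = \cent{G_{\theta_{a_1}}}{b_2, \ldots, b_j} = \cent{G_{\theta_{a_1}}}{\spanq{b_2, \ldots, b_j}},
\]
where the last equality uses the elementary fact that, for a linear action, the pointwise centralizer of a set of vectors coincides with the pointwise centralizer of their $\Fq$-span.

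Finally, since the stabilizer chain is strictly decreasing by assumption, the associated chain of centralizers is also strictly decreasing, which forces the chain of spans to be strictly increasing:
\[
0 < \spanq{b_2} < \spanq{b_2, b_3} < \cdots < \spanq{b_2, \ldots, b_k} \leq V.
\]
Each strict inclusion raises the dimension by at least one, and $\dim_\Fq V = 2m$, so this chain has at most $2m+1$ distinct terms. Hence $k \leq 1 + 2m$, as required. The only real work is packaged in equation \eqref{eq:-1}, which has already been done; the remaining argument is a short linear-algebra observation, so I do not expect any further obstacle.
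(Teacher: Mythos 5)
Your proposal is correct and follows essentially the same approach as the paper: normalise $a_1$ via transitivity, convert the intersections $\bigcap_{i\le j} G_{\theta_{a_i}}$ into centralizers $\cent{G_{\theta_{a_1}}}{\spanq{b_2,\dots,b_j}}$ via equation~\eqref{eq:-1}, observe that strict descent of the stabilizer chain forces strict ascent of the chain of $\Fq$-spans, and conclude $k\le 2m+1$ from $\dim_{\Fq}V=2m$. There is no meaningful deviation from the paper's argument.
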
 

We are now able to prove Lemma~\ref{l: symp orth} for $m\geq 3$. The proof is virtually identical when $m=2$, however to avoid annoying details, we will use the fact that when $m=2$ the result follows from Lemma~\ref{l: sp4} which we prove in a moment.

\begin{proof}[Proof of Lemma~\ref{l: symp orth} for $m\geq 3$]
Let $q=2^e$. Since $m\geq 3$, 
\[
 \mathrm{Sp}_{2m}(q) \unlhd G \leq \mathrm{\Gamma Sp}_{2m}(q) =\Sp_{2m}(q) \rtimes \C_e.
\]
Since a strictly descending chain of subgroups of $\C_e$ has length at most $\log e$, we conclude that
\[ \Irred(\mathrm{\Gamma Sp}_{2m}(q), \Omega^{\epsilon}) \le 2m +1 + \log e.
\]
Suppose $2m+1+\log e\ge 2\log t^\epsilon$. (Recall the cardinality of $\Omega^\epsilon$ from Lemma~\ref{report}.) Then
$$2^{2m+1}\cdot e \ge (t^\epsilon)^2= 2^{2em-2}\left(2^{em} +\epsilon 1\right)^2 \geq 2^{2em-2}\left(2^{em} - 1\right)^2.$$
When $e\ge 2$, with an easy computation we obtain a contradiction. When $e=1$, we have $2^{2m+1}\ge 2^{2m-2}(2^{m}-1)^2$, which implies $2^3\ge (2^m-1)^2$. Again this is a contradiction, because $m\ge 3$.

Putting these things together, we obtain that
$\Irred(G, \Omega^{\epsilon}) < 2\log t^{\epsilon}.$
\end{proof}

\subsection{Cases involving graph automorphisms}\label{s: graph}

We remarked after Definition~\ref{d: subspace action} that the almost simple groups with socle $\Sp_4(2^a)$ or $\POmega_8^{+}(q)$ containing a graph automorphism or a triality graph automorphism were not covered by the definition. We briefly deal with these groups here.

\begin{lem}\label{l: sp4}
 Let $G$ be an almost simple group with socle $G_0= \Sp_4(2^a)'$, and let $G$ act faithfully and primitively on $\Omega$, a set of size $t$. Then
 \[
  \Irred(G,\Omega)< \frac{11}{3}\log t.
 \]

\end{lem}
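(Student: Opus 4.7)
The plan is to bound $\Irred(G,\Omega)$ crudely by the chain length $\ell(G)$, and then to compare $\ell(G) \le \log|G|$ with $\tfrac{11}{3}\log t$, where $t$ is bounded below by the smallest possible faithful primitive degree of the socle. The point is that $\Sp_4(2^a)$ is ``small'' (order roughly $q^{10}$) compared to the cube of its minimal primitive degree, leaving just enough room for the factor $\tfrac{11}{3}$.

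I would first handle the generic case $a\ge 2$. Here $G_0:=\mathrm{soc}(G)=\Sp_4(2^a)$ is simple and $|\Out(G_0)|=2a$, the factor $2$ being the exceptional graph automorphism available in even characteristic. Therefore
$$|G| \le 2a\cdot|\Sp_4(2^a)| = 2a\cdot q^4(q^2-1)(q^4-1) < 2a\cdot 2^{10a},$$
so $\log|G| < 1+\log a + 10a$. The smallest degree of a faithful primitive permutation representation of $\Sp_4(2^a)$ with $a\ge 2$ is the well-known value $q^3+q^2+q+1$, realised on isotropic $1$-spaces of the natural module; since $G_0$ is transitive on $\Omega$, we get $t\ge q^3+q^2+q+1>2^{3a}$, whence $\tfrac{11}{3}\log t>11a$. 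Combining these, the target inequality
$$\Irred(G,\Omega) \le \ell(G) \le \log|G| < \tfrac{11}{3}\log t$$
reduces to $1+\log a<a$ (plus a tiny positive correction from $\log(1+q^{-1}+q^{-2}+q^{-3})$), which is immediate for $a\ge 3$. For the residual case $a=2$ I would verify the inequality by direct arithmetic: $\log|G|\le 22$ against $\tfrac{11}{3}\log 85>23$.

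For $a=1$, we have $G_0=\Sp_4(2)'=\Alt(6)$ and $G\le\Aut(\Alt(6))\cong\mathrm{P\Gamma L}_2(9)$ of order $1440$. There are only finitely many such almost simple groups, each with finitely many primitive actions, so the bound can be verified case by case in \magma. The main obstacle is the boundary regime: at $a=2$ the crude estimate $\log|G|\approx 10a$ is only barely below $\tfrac{11}{3}\log t\approx 11a$, so one cannot afford slack and must use the slight improvement $\log t > 3a + \log(1+q^{-1}+q^{-2}+q^{-3})$; and at $a=1$ the crude bound $\ell(G)\le\log|G|$ is too weak against $t=6$, forcing one to compute $\Irred$ directly in each of the concrete actions of $\Alt(6), \Sym(6), \mathrm{PGL}_2(9), M_{10}$ and $\mathrm{P\Gamma L}_2(9)$ rather than use a uniform argument.
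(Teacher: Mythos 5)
Your proposal takes essentially the same route as the paper: bound $\Irred(G)\le\ell(G)\le\log|G|$ using $|G|\le|\Aut(\Sp_4(2^a))|$, compare against the minimal faithful primitive degree $t\ge q^3+q^2+q+1>2^{3a}$ cited from Kleidman--Liebeck, and finish $a=1$ by a finite check inside $\PGammaL_2(9)$. The only cosmetic difference is bookkeeping: the paper absorbs everything into the single uniform bound $|G|\le 2^{11a}$ for $a\ge 2$, whereas you split off $a=2$ for a direct numeric comparison — both work, and the paper even handles $a=1$ slightly more cleanly by observing $\ell(\PGammaL_2(9))=6$ once rather than computing $\Irred$ in each of the five concrete actions.
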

\begin{proof}
If $a=1$, then $G\leq \PGammaL_2(9)$. It is not hard to verify that the longest strictly increasing chain of subgroups of $\PGammaL_2(9)$ has length $6$. Thus $\Irred(G)\leq 6$. On the other hand $t\geq 6$ and so the result follows. Assume, then, that $a>2$. Note that 
\[ G_0 \leq G \leq \Aut(\Sp_4(2^a))= \mathrm{\Gamma Sp}_4(2^a) . \langle \gamma \rangle,
\] where $\gamma$ is a graph automorphism of order $2a$. 
We obtain that
\begin{equation}\label{eq:happy1}
\begin{split}
|G| &\leq  |\Aut(\Sp_4(2^a))| = |\Sp_4(2^a)|\cdot 2 \cdot a  \\
 &= 2^{4a}(2^{2a}-1)(2^{4a}-1) \cdot 2 \cdot a \leq 2^{11a}.
\end{split}
\end{equation}
On the other hand \cite[Theorem~5.2.2]{kl} implies that  
\begin{equation}\label{eq:happy2}
t \geq \frac{2^{4a}-1}{2^a-1}= 2^{3a}+2^{2a}+2^a+1 >2^{3a}. 
\end{equation}
Then \eqref{eq:happy1} and \eqref{eq:happy2} yield
\[
\Irred(G) \leq \log 2^{11a} = \log (2^{3a})^{\frac{11}{3}} = \frac{11}{3} \log 2^{3a} < \frac{11}{3} \log t.\qedhere
\]
\end{proof}

\begin{lem}\label{l: omega 8}
 Let $G$ be an almost simple group with socle $G_0=\POmega_8^+(q)$, where $q=p^f$ for some prime $p$ and positive integer $f$, and let $G$ act primitively on $\Omega$, a set of size $t$. Then
 \[
  \Irred(G,\Omega)< \frac{16}{3}\log t.
 \]
\end{lem}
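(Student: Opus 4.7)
The plan is to mimic the strategy of Lemma~\ref{l: sp4}: bound $|G|$ from above using $G \leq \Aut(\POmega_8^+(q))$, bound $t$ from below using minimum-degree results for primitive actions of classical groups, and conclude via $\Irred(G) \leq \ell(G) \leq \log|G|$.

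First I would write $q = p^f$ and combine the standard formulas
\[
|\POmega_8^+(q)| = \frac{1}{\gcd(4,q^4-1)}\, q^{12}(q^4-1)^2(q^2-1)(q^6-1)
\]
and $|\Out(\POmega_8^+(q))| = \gcd(4,q^4-1)\cdot 6\cdot f$ (the factor of $6$ coming from the triality $S_3$ and the $f$ from field automorphisms). The diagonal factors cancel, yielding
\[
|G| \leq |\Aut(\POmega_8^+(q))| = 6f\cdot q^{12}(q^4-1)^2(q^2-1)(q^6-1) \leq 6f\cdot q^{28}.
\]

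Next I would invoke \cite[Theorem 5.2.2]{kl}, from which the minimum index of a maximal subgroup of the socle $G_0 = \POmega_8^+(q)$ is $(q^3+1)(q^4-1)/(q-1)$, realized by the parabolic $P_1$ stabilizing a singular $1$-space. Because $t \geq [G_0 : G_0\cap G_\omega]$ for any $\omega\in\Omega$, we get $t > q^6$ and hence $\log t > 6\log q$.

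Combining the two bounds gives $\Irred(G) \leq \log|G| \leq 28\log q + \log(6f)$, while $(16/3)\log t > 32\log q$, so it suffices to verify $\log(6f) < 4\log q$, i.e.\ $6f < q^4$. Since $q \geq 2^f$, this reduces to the routine bound $6f < 16^f$, valid for every $f\geq 1$. The only mild subtlety I foresee is that when $G$ contains a triality automorphism the three parabolic classes $P_1,P_3,P_4$ may be fused, so the primitive action of $G_0$ on singular $1$-spaces need not extend to a primitive action of $G$; however the stabilizer in $G$ of a point of $\Omega$ still intersects $G_0$ in a subgroup whose index in $G_0$ is at least $(q^3+1)(q^4-1)/(q-1)$, so the lower bound on $t$ is preserved. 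I therefore do not expect any serious obstacle.
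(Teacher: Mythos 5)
Your proposal is essentially the same argument as the paper's: bound $|G|$ by $6fq^{28}$ using the order formula and $|\Out(G_0)| = 6df$, bound $t$ from below by roughly $q^6$, and conclude via $\Irred(G) \leq \log|G|$. The paper simply folds $6f < q^4$ into the upper bound to get $|G| < q^{32}$ and then compares directly with $t > q^6$, whereas you keep the $\log(6f)$ term separate and dispose of it at the end; the arithmetic is the same.

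One small inaccuracy worth flagging: for $q=2$ the minimum index of a maximal subgroup of $\POmega_8^+(2)$ is \emph{not} given by the parabolic formula $(q^3+1)(q^4-1)/(q-1)=135$; the group has $\Sp_6(2)$ maximal of index $120$, which is smaller. The paper handles $q=2$ as a separate case with the bound $t\geq 2^3(2^4-1)=120$. Your conclusion $t > q^6 = 64$ still holds since $120 > 64$, so the final inequality is unaffected, but the cited minimum-degree claim is strictly false at $q=2$ and should be stated carefully (as a lower bound on $t$ that holds for all $q$, or with the $q=2$ exception treated separately). Your remark about triality possibly fusing the parabolic classes $P_1,P_3,P_4$ is sensible and correct, though the paper does not bother to make it explicit: since one only needs a lower bound on $t$ via the index of a point stabilizer in $G_0$, fusion can only help.
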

\begin{proof}
 Note that
\[ G_0 \leq G \leq \Aut(\POmega_8^+(q))= \mathrm{P \Gamma O}_8^+(q). \langle \tau \rangle,
\] with $\tau$ a triality graph automorphism of order 3.
Now, using the fact that $6f<q^4$, and recalling that
\begin{align*}
& |G_0|= \frac{1}{d} q^{12}(q^4-1) \prod_{i=1}^{3}{(q^{2i}-1)},\\
& |\Out(G_0)|=6df,
\end{align*}
where $d=(4,q^4-1)$, we obtain that
\begin{equation}\label{eq:sad1}
\begin{split}
|G| &\leq |\Aut(G_0)|=|G_0|\cdot|\Out(G_0)|\\
&= 6df \cdot \frac{1}{d} q^{12}(q^4-1)(q^2-1)(q^4-1)(q^6-1) \\
&< 6fq^{28} < q^{32}.
\end{split}
\end{equation}
On the other hand \cite[\S 5.2]{kl} implies that
\begin{equation}\label{eq:sad2}
\begin{cases}
t \geq \frac{(q^4-1)(q^3+1)}{(q-1)} = (q^3+q^2+q+1)(q^3+1) > q^6, & \text{ for $q>2$ }; \\
t \geq 2^3(2^4-1) > 2^6, & \text{ for $q=2$}.
\end{cases}
\end{equation}
Then, \eqref{eq:sad1} and \eqref{eq:sad2} yield
\[ \Irred(G) \leq \log q^{32} = \log (q^6)^{\frac{32}{6}} < \frac{16}{3} \log t.\qedhere
\]
\end{proof}

Now Proposition~\ref{p: as} is a consequence of Lemmas~\ref{l: non-standard}, \ref{l: an partition}, \ref{l: symp orth}, \ref{l: sp4} and \ref{l: omega 8}. Similarly, Theorem~\ref{t: irred} is a consequence of Propositions~\ref{p: regular normal}, \ref{p: diagonal}, \ref{p: I CD and PA} and \ref{p: as} (observe that in Proposition~\ref{p: I CD and PA} we are using the fact that $|\Delta|\ge 5$ and hence $3/(2\log|\Delta|)<1$).

\section{Almost simple groups: Bounds on \texorpdfstring{$\Height(G)$}{H(G)}}\label{s: as H}

Now that Theorem~\ref{t: irred} is proved, we turn our attention to the proof of Theorem~\ref{t: height}. In light of Theorem~\ref{t: irred} and Proposition~\ref{p: I CD and PA}, all that is required is that we deal with the almost simple groups listed at item (1)(b) of Theorem~\ref{t: irred} -- we must show that these conform to the bound given in Theorem~\ref{t: height}. Thus the result that we need is the following.

\begin{prop}\label{p: as height}
Let $G$ be an almost simple group with socle, $G_0$, a simple classical group. Let $V$ be the associated natural module, of dimension $n$ over a field $\Fq$, let $m$ be an integer with $0<m<n$, let $\Omega$ be a set of $m$-dimensional subspaces of $V$ on which $G$ acts primitively and let $t=|\Omega|$. Then
\[
 \Height(G)<\frac{17}{2}\log t.
\]
\end{prop}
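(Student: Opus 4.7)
The plan is to bound $\Height(G)$ by combining a geometric bound on the size of an independent set of subspaces (linear in $\dim V$) with standard lower bounds on $t=|\Omega|$.

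First I would reduce to the socle $G_0$. By Lemma~\ref{l: chain}, $\Height(G) \le \Height(G_0) + \ell(G/G_0)$, and $G/G_0$ embeds in $\Out(G_0)$, whose length is at most $O(\log n + \log\log q)$; this is negligible compared with $\log t$. So the main content of the proof is a bound on $\Height(G_0,\Omega)$.

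Then comes the geometric bound. Let $\{U_1,\dots,U_k\}$ be an independent set in $\Omega$ and consider the associated chain of pointwise stabilizers in $G_0$. Mimicking the approach taken in Lemma~\ref{l: sp stab} for the symplectic case, I would associate to each successive stabilizer drop a strict inclusion in a chain of $\Fq$-subspaces of $V$ built from the $U_j$: typically the successive sums $U_1 + \cdots + U_i$ together with, in the orthogonal/unitary/symplectic cases, their radicals and perpendiculars. Since any strictly ascending chain of $\Fq$-subspaces of $V$ has length at most $n$, this will yield $k = O(n)$, with the implicit constant depending on how many independent geometric invariants each new $U_i$ can contribute.

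For the counting step, in the $\PSL$ case $t \ge q^{m(n-m)}$, so $\log t \ge m(n-m)\log q$. Analogous lower bounds $\log t \ge \alpha n \log q$ for some explicit $\alpha>0$ hold for the other classical cases via the orbit-length formulas in \cite[Section~5.2]{kl}. For actions on pairs of subspaces allowed by Theorem~\ref{t: irred}(1)(b), an independent set of pairs projects onto two independent sets of subspaces, doubling the size bound but this is absorbed into the constant $\frac{17}{2}$. Combining these bounds yields $\Height(G) \le \frac{17}{2}\log t$ after accommodating the outer-automorphism contribution from the first step.

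The main obstacle is the geometric bound: showing $\Height(G_0,\Omega) = O(n)$ requires, for each classical group and each subspace type, an explicit construction of the chain of $\Fq$-subspaces witnessing each stabilizer drop, handling the distinct types (non-degenerate, totally singular, arbitrary) on a case-by-case basis. An alternative is to proceed inductively: the pointwise stabilizer of a configuration of subspaces lies inside a product of classical groups of smaller dimension acting naturally on the quotients of $V$, and the argument cascades down.
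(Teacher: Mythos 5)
There is a genuine gap at the heart of your geometric step. For the action on $m$-spaces, the pointwise stabilizer of $U_1,\dots,U_i$ is far more complicated than the stabilizer of a single subspace such as $U_1+\cdots+U_i$: one can keep adjoining new $m$-spaces that strictly shrink the stabilizer without ever increasing the span (or any natural ancillary chain of $\Fq$-subspaces of $V$). So the drops in the stabilizer chain cannot be accounted for, one for one, by strict inclusions in an $\Fq$-subspace chain, and the claim that this yields $k=O(n)$ (even ``up to a constant'') does not follow. The correct order of magnitude here is $O(mn)$, not $O(n)$: the paper's Lemma~\ref{l: pgl} proves $\Height(G,\Omega)<2mn$ and then compares with $\log t \geq m(n-m)\log q$, in which the factor of $m$ is essential on both sides. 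The chain-of-subspaces argument you cite from Lemma~\ref{l: sp stab} works there only because the joint stabilizer $G_{\theta_{a_1}}\cap G_{\theta_{a_i}}$ collapses to a vector centralizer $\cent{G_{\theta_{a_1}}}{a_1+a_i}$ (equation~\eqref{eq:-1}); no such collapse exists for stabilizers of tuples of $m$-spaces.

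Two further points that confirm the gap and indicate what is missing. First, your argument, as written, is blind to the distinction between $\Height$ and $\Irred$: you take an arbitrary independent set and run the stabilizer chain without reordering. But the paper stresses, at the end of \S\ref{s: as H}, that the freedom to reorder the independent set is precisely what makes the $\Height$ bound provable while the corresponding $\Irred$ bound for subspace actions remains Conjecture~\ref{conj: I}. If your argument worked, it would prove the conjecture with a much stronger bound, so something must be wrong. The paper's actual engine is twofold: Lemma~\ref{lemma6}, which says a maximal independent set already spans a subspace of dimension $>n-m$, and the passage to the matrix algebra $M_n(q)$, where the dimension of the joint stabilizer subalgebra $M_{W_1,\dots,W_k}$ caps the number of further drops. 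Neither ingredient appears in your sketch. Second, your treatment of pairs of subspaces assumes an independent set of pairs ``projects onto two independent sets of subspaces''; the paper notes explicitly that this is false \emph{a priori}, and Lemma~\ref{l: psl 2} requires a genuine graph-theoretic argument (Lemma~\ref{lem:19} and the matching/non-matching case analysis) to derive $k\leq\Height(G,\Omega_m)+\Height(G,\Omega_{n-m})$.
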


\subsection{\texorpdfstring{$G_0=\PSL_n(q)$}{G0=PSL(n,q)} and elements of \texorpdfstring{$\Omega$}{Omega} are \texorpdfstring{$m$-spaces}{m-spaces}}

In this case we suppose that $G_0=\PSL_n(q)$, and that $\Omega$ is the set of all $m$-dimensional subspaces of $V$, where $V$ is the natural $n$-dimensional module for $G_0$ over $\Fq$. Note that $G$ is transitive on $\Omega$. We prove the following result.

\begin{lem}\label{l: psl} 
Let $G$ be an almost simple group with socle $\PSL_n(q)$ acting on $\Omega$, the set of all $m$-spaces in $V$. Then 
\[ \Height(G) < \frac{11}{2}\log t.
\]
Moreover, if $q=p^f$ with $p$ a prime number and $G\le \mathrm{P}\Gamma\mathrm{L}_n(q)$, then
\begin{equation}\label{eq:197} 
\Height(G) <  2\min(m,n-m)n + \log \log_p{q}.
\end{equation}
\end{lem}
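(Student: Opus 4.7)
The plan is to establish the ``moreover'' bound~\eqref{eq:197} first, and then to derive the bound $\Height(G)<\frac{11}{2}\log t$ from it. For~\eqref{eq:197} we reduce in three steps: peel off the field automorphisms to pass from $G\le\PGammaL_n(q)$ to a subgroup of $\PGL_n(q)$; lift the latter to $\GL_n(q)$ (the scalar centre acts trivially on $\Omega$); and use duality to assume $m\le n/2$.

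Concretely, set $N:=G\cap\PGL_n(q)$. Then $G/N$ embeds into the cyclic group $\PGammaL_n(q)/\PGL_n(q)$ of order $f:=\log_pq$, so $\ell(G/N)\le\log f$. Lemma~\ref{l: chain} yields $\Height(G)\le\Height(N)+\log\log_pq$, and Lemma~\ref{l: subgroup} together with the observation that $\PGL_n(q)$ and $\GL_n(q)$ induce the same permutation group on $\Omega$ gives $\Height(N)\le\Height(\GL_n(q),\Omega)$. The natural duality $V\mapsto V^*$ interchanges the $\GL_n(q)$-actions on $m$- and $(n-m)$-dimensional subspaces, so we may assume $m\le n/2$, and~\eqref{eq:197} reduces to the key inequality
\[
 \Height(\GL_n(q),\Omega)\le 2mn.
\]

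The central technical step is this last bound. Given an independent set $\Lambda=\{U_1,\dots,U_k\}$ of $m$-subspaces, a greedy argument (whose failure at any stage would contradict the independence of $\Lambda$ via Lemma~\ref{l: stab}) lets us reorder the $U_i$ so that
\[
 \GL(V)=G_0>G_1>\cdots>G_k,\qquad G_i:=\bigcap_{j\le i}\mathrm{Stab}_{\GL(V)}(U_j),
\]
is a strictly descending chain. Each $G_i$ coincides with the pointwise stabiliser of the sublattice $\mathcal{L}_i\subseteq\mathrm{Sub}(V)$ generated by $\{0,V,U_1,\dots,U_i\}$ under sums and intersections, and the strict drop $G_{i-1}>G_i$ forces $U_i\notin\mathcal{L}_{i-1}$. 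Choosing a maximal chain $0=W_0\subset\cdots\subset W_r=V$ in $\mathcal{L}_k$ (so $r\le n$), one bounds $G_k$ inside the parabolic stabiliser of this flag via its Levi decomposition, keeping track of how each $U_i$---being $m$-dimensional rather than an arbitrary subspace---contributes an $O(m)$-sized piece to the flag data and to the unipotent constraints at each step. Squeezing out the sharp constant $2$ in $2mn$ is the main technical obstacle; a cruder analysis via $\ell(\GL_n(q))$ alone would yield only $O(n^2\log q)$.

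The bound $\Height(G)<\frac{11}{2}\log t$ then follows from~\eqref{eq:197} by a numerical comparison: the estimate $\log t=\log\binom{n}{m}_q\ge m(n-m)\log q$ (refined in degenerate cases using $\log t\ge(n-m)\log q$) makes $2\min(m,n-m)n+\log\log_pq<\frac{11}{2}\log t$ routine to verify whenever $G\le\PGammaL_n(q)$. The remaining case is when $G$ contains a graph automorphism of $\PSL_n(q)$, which forces $n=2m$; then $H:=G\cap\PGammaL_n(q)$ has index at most $2$ in $G$, so $\Height(G)\le\Height(H)+1$ by Lemma~\ref{l: chain}, and applying the already-established bound~\eqref{eq:197} to $H$ completes the proof after one final numerical check.
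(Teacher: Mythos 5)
Your reduction steps (peeling off field automorphisms via Lemma~\ref{l: chain}, lifting to $\GL_n(q)$, using duality to assume $m\le n/2$, and handling the graph--automorphism case by passing to the index-$2$ subgroup $H=G\cap\PGammaL_n(q)$) correctly mirror the structure of the paper's argument, and the closing numerics (via $t>q^{m(n-m)}$) also match.

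However, there is a genuine gap at the centre of the proof: the key inequality $\Height(\GL_n(q),\Omega)\le 2mn$ is not actually established. You describe a lattice/flag approach --- building the sublattice $\mathcal{L}_i$ generated by $\{0,V,U_1,\dots,U_i\}$, choosing a maximal chain, and bounding $G_k$ inside a parabolic via its Levi decomposition --- but you then explicitly concede that ``squeezing out the sharp constant $2$ in $2mn$ is the main technical obstacle.'' That obstacle is precisely the content of the lemma. It is not clear that the flag/parabolic bookkeeping you sketch would close, since $\mathcal{L}_k$ may generate only a partial flag and the $U_i$ need not sit nicely relative to it; ``each $U_i$ contributes an $O(m)$-sized piece'' is an assertion, not an argument. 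The paper's route (Lemmas~\ref{lemma6} and~\ref{l: pgl}) is quite different and more concrete: it first shows that a \emph{maximal} independent set must span a subspace of dimension $>n-m$, then chooses an ordering of $\Lambda$ so that the partial spans $\langle W_1,\dots,W_j\rangle$ strictly increase, and bounds the number of further elements by an explicit $\Fq$-dimension count in the matrix algebra $M_n(q)$ (giving $\Height(G)\le k+m(n-s)+ns$ with $k\le n-m-s$ and $s\le m-1$, whence $<2mn$). You would need either to carry out that dimension count, or to genuinely complete your parabolic argument with the same quantitative precision, before the rest of the proof can stand.

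Two smaller remarks. First, a strictly descending chain $G_0>G_1>\cdots>G_k$ holds for \emph{every} ordering of an independent set (if $G_{i-1}=G_i$ then $G_{U_i}\supseteq G_{(\Lambda\setminus\{U_i\})}$, contradicting independence), so no ``greedy'' step is needed to obtain it; the paper's ordering is instead chosen so that the partial spans strictly grow, which is a different and stronger requirement and is what Lemma~\ref{lemma6} feeds into. Second, your statement ``$G_{i-1}>G_i$ forces $U_i\notin\mathcal{L}_{i-1}$'' is correct but is only a necessary condition; by itself it gives at most $k\le$ (length of a maximal chain of sublattices of subspaces of $V$), which is far weaker than $2mn$.
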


Using the fact that duality conjugates $m$-space stabilizers to $(n-m)$-space stabilizers, in what follows we may suppose that $m\le n/2$. Two notes concerning the actions considered in Lemma~\ref{l: psl}. First, it is easy to check that
\begin{equation}\label{eq: t bound}
 t> q^{m(n-m)}.
\end{equation}
Second, we note that if $m<\frac{n}{2}$, then  $G\leq \PGammaL_n(q)$. If $m=\frac{n}{2}$, then we must allow for the possibility that $G$ contains a graph automorphism of $\PSL_n(q)$.

\subsection{Some preliminaries}\label{preliminary}

Let $V$ and $W$ be finite-dimensional vector spaces over $\Fq$. We denote by $\Ends{V}$ the set of all semilinear transformations $V \to V$. Moreover, we write $\Hom{V}{W}$ for the set of all linear maps $V \to W$ and $\Homs{V}{W}$ for the set of all semilinear maps $V \to W$.

If $n=\dim(V)$, then we write $M_n(q)$ to denote the set of linear transformations $V\to V$; where a basis has been chosen, we also allow $M=M_n(q)$ to denote the set of all $n$-by-$n$ matrices over $\Fq$. If $W_1,\dots, W_k\leq V$, then we define
\[
 M_{W_1,\dots, W_k} = \{ g\in M \mid W_ig\leq W_i \textrm{ for all }i=1,\dots, k\}.
\]
Using this definition, it is natural to extend the concept of base, irredundant base and independent set in the context of the algebra $M=M_n(q)$. For instance, given a set $\Lambda:=\{W_1,\ldots,W_k\}$ of subspaces of $V$, we say that $\Lambda$ is an independent set for $M=M_n(q)$ if, for each $i\in \{1,\ldots,k\}$, we have
$$M_{W_1,\ldots,W_k}<M_{W_1,\ldots,W_{i-1},W_{i+1},\ldots,W_{k}}.$$

Recall that any element of $\aut(\Fq)$ induces an automorphism of $V$: we first fix a basis for $V$ and then act coordinate-wise. The following lemma is standard, but we include it for completeness.

\begin{lem}\label{lemma1}
Let $g \in \Homs{V}{W}$, $g \neq 0$. Then:
\begin{itemize}
\item[(a)] there exists a unique $\sigma \in \Aut(\Fq)$ such that $g$ is $\sigma$-semilinear. We say that $\sigma$ is the \emph{associated automorphism} of $g$;
\item[(b)] if $\sigma$ is the associated automorphism of $g$, then $g\sigma^{-1} \in \Hom{V}{W}$;
\item[(c)] there exists a unique $h \in \Hom{V}{W}$ such that $g= h\sigma$.
\end{itemize}
\end{lem}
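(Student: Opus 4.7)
The plan is to exploit the definition of semilinearity together with the invertibility of $\sigma \in \aut(\Fq)$: part (a) pins down $\sigma$ from $g$, part (b) ``untwists'' $g$ by $\sigma^{-1}$, and part (c) bundles the two into an existence/uniqueness statement for the factorization $g = h\sigma$. Throughout I view $\sigma \in \aut(\Fq)$, as in the paragraph preceding the lemma, as inducing a $\sigma$-semilinear bijection of $V$ (and of $W$) by acting coordinate-wise in a fixed basis, and I denote this induced bijection again by $\sigma$.

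For part (a), since $g \neq 0$, I would pick $v_0 \in V$ with $g(v_0) \neq 0$. The hypothesis $g \in \Homs{V}{W}$ supplies at least one $\sigma$ with $g(cv_0) = \sigma(c) g(v_0)$ for every $c \in \Fq$. If another $\sigma' \in \aut(\Fq)$ also witnesses semilinearity, then $\sigma(c) g(v_0) = \sigma'(c) g(v_0)$ for all $c$, and cancellation of the nonzero $g(v_0)$ (coordinate-wise in any coordinate of $W$ where it does not vanish) forces $\sigma(c) = \sigma'(c)$ for every $c$.

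For part (b), a one-line verification suffices. Writing composition from left to right in line with the paper's convention, for every $c \in \Fq$ and $v \in V$ one has
\[
 (g\sigma^{-1})(cv) \;=\; g\bigl(\sigma^{-1}(c)\,\sigma^{-1}(v)\bigr) \;=\; \sigma\bigl(\sigma^{-1}(c)\bigr)\cdot g\bigl(\sigma^{-1}(v)\bigr) \;=\; c\cdot (g\sigma^{-1})(v),
\]
and additivity is automatic, so $g\sigma^{-1} \in \Hom{V}{W}$.

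For part (c), existence is immediate by setting $h := g\sigma^{-1}$: by (b) it lies in $\Hom{V}{W}$, and $h\sigma = g\sigma^{-1}\sigma = g$. Uniqueness follows since $\sigma$ is invertible on $V$: if $h\sigma = h'\sigma$ with $h, h' \in \Hom{V}{W}$, composing on the right with $\sigma^{-1}$ yields $h = h'$. The entire argument is essentially a bookkeeping exercise; the only mildly delicate point -- the nearest thing to an ``obstacle'' -- is being consistent about how $\sigma$ acts on $V$ and on which side it is composed with $g$, and this is dictated by the setup established just before the lemma.
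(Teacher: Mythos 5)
Your proof is correct and takes essentially the same route as the paper's: (a) via a nonzero image vector and cancellation, (b) by a direct semilinearity computation, and (c) by setting $h := g\sigma^{-1}$ and using invertibility of $\sigma$. One minor notational wobble worth fixing: in (b) you announce you will ``write composition from left to right in line with the paper's convention,'' but the formula $(g\sigma^{-1})(cv) = g\bigl(\sigma^{-1}(c)\,\sigma^{-1}(v)\bigr)$ is in fact the usual right-to-left (prefix) composition $g\circ\sigma^{-1}$, whereas the paper's postfix computation is $(kv)g\sigma^{-1} = \bigl(k^{\sigma}(vg)\bigr)\sigma^{-1} = k\bigl((v)g\sigma^{-1}\bigr)$ --- either convention yields a valid argument here, but it should match what you say you are doing.
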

\begin{proof}
\begin{itemize}
\item[(a)] Assume that there exist $\sigma_1,\sigma_2 \in \Aut(\Fq)$ such that $g$ is $\sigma_1$-semilinear and $\sigma_2$-semilinear. Hence for any $v \in V$ and $k \in \Fq$ we have $(kv)g=k^{\sigma_1}(vg)=k^{\sigma_2}(vg)$. In particular, as $g \neq 0$, there exists $v_0 \in V$ such that $(v_0)g \neq 0$. Therefore $k^{\sigma_1}(v_0 g)=k^{\sigma_2}(v_0 g)$, which implies $k^{\sigma_1}=k^{\sigma_2}$, for any $k \in \Aut{(\Fq)}$, and so $\sigma_1=\sigma_2$.
\item[(b)] Let $v_1, v_2 \in V$ and $k \in \Fq$, then
\begin{itemize}
\item $(v_1 + v_2)g \sigma^{-1}= (v_1)g \sigma^{-1} + (v_2)g \sigma^{-1}$;
\item $(kv_1)g \sigma^{-1}= (k^{\sigma}(v_1g))\sigma^{-1}= k((v_1)g \sigma^{-1})$.
\end{itemize}
Hence $g\sigma^{-1}$ is linear.
\item[(c)] From (b) it follows that there exists $h \in \Hom{V}{W}$ such that $g\sigma^{-1}=h$. Hence $g=h\sigma$. Assume that there exist $h_1,h_2 \in \Hom{V}{W}$ such that $g=h_1\sigma=h_2\sigma$. Then for any $v \in V$ we have $(v)h_1 \sigma=(v)h_2 \sigma$, which implies that $(v)h_1=(v)h_2$. So, we conclude that $h_1=h_2$.\qedhere
\end{itemize}
\end{proof}

Now let $V= W \oplus X$ for some $W,X \leq V$. Then every $v \in V$ can be written uniquely as $v=w+x$, for some $w \in W$ and $x \in X$. Let $f_W \in \Ends{W}$ and $f_X \in \Ends{X}$. We define $f_W \oplus f_X \in \Ends{V}$ as $v(f_W \oplus f_X) = wf_W + xf_X  $.

\begin{remark}\label{rmk7}
Let $V = W \oplus X$ for some $W,X \leq V$. Let $g \in \Ends{V}$. Then $g$ can be written as $g = g\res{W} \oplus g \res{X} $, provided $g(W) \leq W$ and $g(X) \leq X$.
\end{remark}

Finally, if $v_1,\dots, v_k \in V$, then we write $\spanq{v_1,\dots, v_k}$ to denote the $\Fq$-span of $v_1,\dots, v_k$.

\subsection{\texorpdfstring{$G\leq \PGL_n(q)$}{G<=PGL(n,q)}}\label{sec:G<=PGL(n,q)}

Let $V$ be $n$-dimensional over $\Fq$. In what follows we write $\M{n}$ for the set of all linear transformations $V\to V$. Note that it will be convenient to swap between thinking of $\GL_n(q)$ or $\PGL_n(q)$, depending on context -- this makes no difference to the calculations in question, as the center of $\SL_n(q)$ is the kernel of the action on $\Omega$.

The following lemma is crucial.

\begin{lem}\label{lemma6} Let $\SL_{n}{(q)} \leq G \subset \GL_n(q)$ act on $\Omega $. Let $\Lambda = \left\{W_1, \dots, W_\ell\right\} \subseteq \Omega$ be an independent set for the action of $G$ on $\Omega$, of maximal size. Then $$\dim \spanq{W_1, \dots, W_\ell} > n-m.$$ 
	\end{lem}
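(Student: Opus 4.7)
I argue by contradiction. Suppose $d := \dim U \leq n-m$, where $U := \spanq{W_1,\dots,W_\ell}$. Since $d+m \leq n$, I can pick an $m$-subspace $W$ of $V$ with $W \cap U = \{0\}$, and I will show that $\Lambda \cup \{W\}$ is still an independent set, contradicting the maximality of $\ell$. Fix a complement so that $V = U \oplus X$; then $W$ is the graph of a linear map $\psi : W' \to U$, where $W' := \pi_X(W)$ is an $m$-subspace of $X$. By Lemma~\ref{l: stab}, the independence of $\Lambda \cup \{W\}$ amounts to two conditions:
\begin{itemize}
\item[(a)] some element of $H := G_\Lambda$ moves $W$;
\item[(b)] for each $i \in \{1,\ldots,\ell\}$, some element of $F_i := G_{\Lambda \setminus \{W_i\}}$ fixes $W$ and moves $W_i$.
\end{itemize}

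Condition (a) is straightforward: $H$ contains every transvection $I + u_0 \otimes f$ with $u_0 \in U$ and $f \in V^*$ vanishing on $U$, and any such element with $u_0 \neq 0$ and $f|_{W'} \neq 0$ lies in $\SL_n(q) \subseteq G$ and moves $W$.

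Condition (b) is the main obstacle, and I handle it by a case split on whether $W_i$ is contained in $U_i := \spanq{W_j : j \neq i}$. If $W_i \not\subseteq U_i$, pick $v \in W \setminus \{0\}$ and $f \in V^*$ satisfying $f|_{U_i} = 0$, $f(v) = 0$, and $f|_{W_i} \neq 0$; such $f$ exists by a dimension count using the fact that $W_i \not\subseteq U_i + \spanq{v}$. The transvection $g := I + v \otimes f$ then lies in $\SL_n(q) \subseteq G$, fixes $W$ and every $W_j$ with $j \neq i$ pointwise, and moves $W_i$. If instead $W_i \subseteq U_i$, I use the fact that independence transfers to the action on subspaces of $U$ (via surjectivity of the restriction $\GL(V)_\Lambda \to \GL(U)_\Lambda$) to find $A \in \GL(U)$ preserving each $W_j$ for $j \neq i$ but not $W_i$. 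Choosing $C \in \GL(X)$ in the parabolic stabilizer of $W'$ with $\det C = (\det A)^{-1}$, and $B : X \to U$ with $B|_{W'}(x) = \psi(Cx) - A\psi(x)$, the element
\[
g := \begin{pmatrix} A & B \\ 0 & C \end{pmatrix}
\]
lies in $\SL_n(q) \subseteq G$, fixes every $W_j$ for $j \neq i$, fixes $W$ by the choice of $B$ and $C$, and moves $W_i$.

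The chief technical point lies in the second case: exhibiting $C \in \GL(X)$ with the prescribed determinant inside the parabolic stabilizer of $W'$, which is a routine consequence of the Levi decomposition of that parabolic. The conceptual heart of the proof is the case distinction on whether $W_i$ lies in $U_i$, which dictates whether a rank-one transvection suffices or a block-matrix construction is needed.
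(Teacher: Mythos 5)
Your proof is correct, and while it shares the high-level strategy with the paper's (assume $\dim U\le n-m$ where $U:=\spanq{W_1,\dots,W_\ell}$, append an $m$-space meeting $U$ trivially, and show the enlarged set is still independent, contradicting maximality), the implementation is genuinely different. The paper chooses coordinates so that $U$ is a coordinate subspace of dimension exactly $n-m$ (passing to a smaller ambient $V_0$ when $\dim U<n-m$), appends the complementary coordinate $m$-space $X$, invokes Lemma~\ref{l: subset} to get a proper independent subset $\bar{\Delta}$ of $\Lambda\cup\{X\}$ with the same pointwise stabilizer, and derives a contradiction by case-splitting on the shape of $\bar{\Delta}$; the crux of its final case is that the decomposition $V=U\oplus X$ makes pointwise stabilizers factor through the two summands. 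You instead verify the independence of $\Lambda\cup\{W\}$ head-on via Lemma~\ref{l: stab}, producing explicit witnesses, and your case split is on a different dichotomy: whether $W_i\subseteq U_i:=\spanq{W_j:j\neq i}$. When $W_i\not\subseteq U_i$ a single transvection $I+v\otimes f$ works (the dimension count is fine: since $v\notin U$ and $W_i\subseteq U$, $(U_i+\spanq{v})\cap U=U_i$, so $W_i\not\subseteq U_i+\spanq{v}$); when $W_i\subseteq U_i$ the observation $U_i=U$ lets you restrict any witness of $\Lambda$'s independence to $\GL(U)$ and then extend it to a block upper-triangular element of $\SL_n(q)$ fixing the graph $W$ -- note you do not really need the ``surjectivity of restriction'' claim, only that the witness $g_0\in G_{(\Lambda\setminus\{W_i\})}$ preserves $U_i=U$ and hence restricts. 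A small gain of your version is uniformity: taking any $m$-space $W$ with $W\cap U=\{0\}$ treats $\dim U<n-m$ and $\dim U=n-m$ together, whereas the paper handles the former via the $V_0$ reduction. Both arguments are valid; yours is more constructive, the paper's is softer and leans more on the auxiliary lemmas of \S\ref{s: lemmas}. (A notational nit: $G_\Lambda$ should be $G_{(\Lambda)}$ in the paper's conventions.)
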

\begin{proof}
Let $W = \spanq{W_1, \dots, W_\ell} $ and suppose, first, that $k = \dim{W} = n-m$. Then we can assume, by transitivity, that $W= \spanq{e_1, \dots,  e_{n-m}}$.

Hence 
	\[ G_{(\Lambda)} \leq \left\{
	\left(\begin{array}{@{}c|c@{}}
	\begin{matrix}
	A
	\end{matrix}
	& \begin{matrix}
	0 
	\end{matrix}\\
	\hline
\begin{matrix} 
B
\end{matrix} &
\begin{matrix}
	C
	\end{matrix}
	\end{array} \right) \mid A \in \M{n-m}, B \in \M{m,n-m}, C \in \M{m} \right\} . \]
	
	Now we add to $\Lambda$ the following $m$-subspace
	\[ X= \spanq{e_{n-m+1}, \dots, e_n},
	\] and we denote $\bar{\Lambda}= \Lambda \cup \lbrace X \rbrace$. 
	Hence, by the maximality of $\Lambda$, we get
	\[ G_{(\bar{\Lambda})}= G_{(\bar{\Delta})},  \]
	for some $\bar{\Delta} \subseteq \bar{\Lambda}$. 
	
	We distinguish three cases:
	\begin{enumerate}
	\item[\textbf{Case I}]: if $\bar{\Delta} = \Lambda$, then $G_{(\bar{\Lambda})}=G_{(\Lambda)}$.  Now
	
	\[ G_{(\bar{\Lambda})} \leq  \left\{
	\left(\begin{array}{@{}c|c@{}}
	\begin{matrix}
	A
	\end{matrix}
	& \begin{matrix}
	0 
	\end{matrix}\\
	\hline
\begin{matrix} 
0
\end{matrix} &
\begin{matrix}
	C
	\end{matrix}
	\end{array} \right) \mid A \in \M{n-m}, C \in \M{m} \right\},
	\] but
	
	\[  G_{(\Lambda)} \geq \left\{
	\left(\begin{array}{@{}c|c@{}}
	\begin{matrix}
	I
	\end{matrix}
	& \begin{matrix}
	0 
	\end{matrix}\\
	\hline
\begin{matrix} 
B
\end{matrix} &
\begin{matrix}
	I
	\end{matrix}
	\end{array} \right) \mid  B \in \M{m,n-m}, B \neq 0 \right\},
	\] and this is a contradiction.
	
		\item[\textbf{Case II}]: if $\bar{\Delta} \subset \Lambda$, then $G_{(\Lambda)} \leq G_{(\bar{\Delta})}$. On the other hand, $ G_{(\bar{\Delta})} = G_{(\bar{\Lambda})} \leq G_{(\Lambda)} $, which implies $G_{(\Lambda)} = G_{(\bar{\Delta})}$, and hence a contradiction of the independence of $\Lambda$.
		\item[\textbf{Case III}]: if $\bar{\Delta} \subsetneq \Lambda$, then $X \in \bar{\Delta}$ and we denote $\Delta = \bar{\Delta} \setminus \lbrace X \rbrace$. It is sufficient to prove that $G_{(\Lambda)} = G_{(\Delta)}$, as this leads to a contradiction. Observe that $V = W \oplus X$ and so for any $g \in \End{V}$ we can define 
		\begin{align*} \pi_W \colon  \End{V} &\to \Hom{W}{V} \\
		g &\mapsto g\res{W} 
		\end{align*}
		and
		\begin{align*} \pi_X \colon \End{V} &\to \Hom{X}{V} \\
		g &\mapsto g\res{X} .
		\end{align*}
		
Now, if $g \in G_{(\bar{\Lambda})}$ or if $g \in G_{(\bar{\Delta})}$, then $g\res{W} \in \End{W}$ and $g\res{X} \in \End{X}$, which, by Remark \ref{rmk7}, imply that $g = g\res{W} \oplus g\res{X}$. Therefore, we have that
\[ g \in G_{(\bar{\Lambda})} \Leftrightarrow \begin{cases} g\res{W} \in \pi_W(G_{(\Lambda)}) \\ g\res{X}(X)=X  \end{cases}
\] 
and 
\[ g \in G_{(\bar{\Delta})} \Leftrightarrow \begin{cases} g\res{W} \in \pi_W(G_{(\Delta)}) \\ g\res{X}(X)=X.  \end{cases}
\] Hence $G_{(\bar{\Lambda})}= G_{(\bar{\Delta})}$ implies 
\begin{equation}\label{pi} \pi_W(G_{(\Lambda)})=\pi_W(G_{(\Delta)}).
\end{equation} 		
On the other hand 
\[ g \in G_{(\Lambda)} \Leftrightarrow g\res{W} \in \pi_W(G_{(\Lambda)})
\]		and
\[ g \in G_{(\Delta)} \Leftrightarrow g\res{W} \in \pi_W(G_{(\Delta)}).
\] Therefore, by \eqref{pi}, we get $G_{(\Lambda)}= G_{(\Delta)}$.
\end{enumerate}	

Finally, if $k < n-m$, then we can assume, by transitivity, that $W = \spanq{e_1, \dots,  e_k}$ and consider $V_0 = \spanq{e_1, \dots, e_{k+m}} \leq V$. Let $X= \spanq{e_{k+1}, \dots, e_{k+m}}$. Hence we can apply to $V_0$ the same argument as before, which leads to a contradiction.
	\end{proof}

The next lemma yields Lemma~\ref{l: psl} provided $G\leq \PGL_n(q)$.
	
\begin{lem}\label{l: pgl}
Let $G$ be an almost simple group with socle $\PSL_n(q)$, such that $\PSL_n(q) \unlhd G \leq \PGL_n(q)$, acting on $\Omega$, the set of all $m$-spaces in $V$. Then 
\[ \Height(G) <4 \log t.
\] 
Moreover,
\[\Height(G)<2\min(m,n-m)n.\]
\end{lem}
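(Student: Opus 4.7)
The plan is to pass from the stabilizer chain in $G$ to a dimension-counting argument in the matrix algebra, using Lemma~\ref{lemma6}. First, the standard duality $W\mapsto W^{\perp}$ identifies $\Height$ on $m$-subspaces with $\Height$ on $(n-m)$-subspaces, so I may assume $m\le n/2$, making $\min(m,n-m)=m$. Since $t\ge q^{m(n-m)}\ge q^{mn/2}$, one has $4\log t\ge 2mn$, so the stronger bound $\Height(G)<2mn$ implies the weaker $\Height(G)<4\log t$; it therefore suffices to prove the former.

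Let $\Lambda=\{W_1,\ldots,W_\ell\}$ be an independent set of maximum size, so $\ell=\Height(G)$. Because $\SL_n(q)\unlhd G$, the stabilizer in $G$ of a set of subspaces is the intersection of $G$ with the corresponding stabilizer in $\GL_n(q)$. Using independence, I reorder so that the stabilizer chain in $\GL_n(q)$ is strictly decreasing; this lifts to a strictly decreasing chain of $\Fq$-subalgebras of $\M{n}$,
\[
\M{n}\;\supsetneq\;\mathcal{A}_{W_1}\;\supsetneq\;\cdots\;\supsetneq\;\mathcal{A}_\Lambda, \qquad \mathcal{A}_{W_1,\ldots,W_k}:=\{T\in \M{n} : T W_i\subseteq W_i\ \text{for}\ i\le k\},
\]
using that each such subalgebra contains scalars and is $\Fq$-linearly spanned by its unit group. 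Counting $\Fq$-dimensions then gives $\ell\le n^2-\dim_{\Fq}\mathcal{A}_\Lambda$, so the task reduces to a lower bound on $\dim_{\Fq}\mathcal{A}_\Lambda$.

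Lemma~\ref{lemma6} provides $\dim\spanq{W_1,\ldots,W_\ell}>n-m$; writing $W:=\spanq{\Lambda}$ and choosing a complement $Y$ of $W$ in $V$ gives $\dim Y<m$. Every operator acting as a scalar on $W$ and arbitrarily on $Y$ lies in $\mathcal{A}_\Lambda$, contributing dimension $1+n\dim Y$. Dually, $\{W_i^{\perp}\}$ is also a maximum independent set for the $(n-m)$-subspace action (by the duality above), so Lemma~\ref{lemma6} applied to it yields $\dim\bigcap_i W_i<m$, and operators with image in $\bigcap_i W_i$ likewise lie in $\mathcal{A}_\Lambda$. Combining these contributions with those coming from intermediate elements of the lattice generated by $\Lambda$ under $+$ and $\cap$ should yield $\dim_{\Fq}\mathcal{A}_\Lambda\ge n^2-2mn+1$, hence $\ell<2mn$.

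The main obstacle is this final dimension count. The naive sum of the two contributions described above degenerates to the trivial bound $\dim_{\Fq}\mathcal{A}_\Lambda\ge 1$ in the extreme case $\dim W=n$, $\bigcap_i W_i=\{0\}$, which is much too weak once $m\ll n/2$. Overcoming this probably requires either an inductive argument---passing to the quotient $V/\bigcap_i W_i$, whose dimension exceeds $n-m$, in which $\Lambda$ induces an independent set of subspaces of reduced dimension, and recursing on $m$---or a more global exploitation of the lattice of $G_{(\Lambda)}$-invariant subspaces generated by $\Lambda$, controlling the many operators in $\mathcal{A}_\Lambda$ forced by intermediate members of that lattice.
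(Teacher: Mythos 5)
You have correctly identified the reduction to $m\le n/2$, the reduction of $\Height(G)<4\log t$ to $\Height(G)<2mn$ via $t>q^{m(n-m)}\ge q^{mn/2}$, and the general strategy of passing to the matrix algebra and invoking Lemma~\ref{lemma6}. Up to that point your argument mirrors the paper's. However, as you yourself acknowledge, you have not established the crucial lower bound on $\dim_{\Fq}\mathcal{A}_\Lambda$, and the two ingredients you do exhibit (operators scalar on $W=\spanq{\Lambda}$, operators with image in $\bigcap_i W_i$) degenerate to nothing when $W=V$ and $\bigcap_i W_i=0$, which is precisely the case one must control. So the proof as written has a genuine gap.

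The paper closes the gap not by quotienting by $\bigcap_i W_i$ nor by a global lattice argument, but by a refined counting scheme built on a careful ordering of $\Lambda$. One uses Lemma~\ref{lemma6} to reorder $\Lambda=\{W_1,\dots,W_k,\dots\}$ so that $\dim\spanq{W_1,\dots,W_j}$ strictly increases for $j\le k$, with the eventual span exceeding dimension $n-m$. At step $i$ one chooses a basis of $V$ adapted to $\spanq{W_1,\dots,W_{i-1}}$ and notes that each of the $j$ newly-covered basis vectors $e_r$ must be sent by any $g\in M_{W_1,\dots,W_i}$ into $W_i$, a space of dimension $m$; this caps the contribution of those coordinates to $\dim M_{W_1,\dots,W_i}$ at $m$ each rather than $n$. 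After all $k$ steps this gives $\dim M_{W_1,\dots,W_k}\le m(n-s)+ns$, where $s=n-\dim\spanq{W_1,\dots,W_k}\le m-1$. The remaining $|\Lambda|-k$ elements of $\Lambda$ then force at least one further drop of dimension each (as your subalgebra-chain argument shows), so $|\Lambda|\le k+m(n-s)+ns$; since $k\le n-m-s$, the bound is worst at $s=m-1$, giving $\Height(G)\le 2mn-m^2-m+1<2mn$. The key idea you are missing is this ordering-plus-basis-tracking: it is what converts the qualitative conclusion of Lemma~\ref{lemma6} into the sharp dimension estimate, and it is also the reason the argument yields a bound on $\Height$ rather than on $\Irred$ (one is allowed to pick the ordering, which is exactly what independence permits). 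Your proposed alternative of recursing on $V/\bigcap_i W_i$ runs into the difficulty that the images $W_j/\bigcap_i W_i$ have varying dimensions, so the induction hypothesis on $m$-spaces would not apply directly.
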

\begin{proof}
As usual, we may suppose $m\le n/2$ and $G=\mathrm{PGL}_n(q)$. Moreover, from the opening paragraph of Section~\ref{sec:G<=PGL(n,q)}, we may work with the linear group $\mathrm{GL}_n(q)$ and hence we may suppose $G=\mathrm{GL}_n(q)$.
Let $\Lambda$ be an independent set for the action of $G$ on $\Omega$, of maximal size.
The previous lemma allows us to order $\Lambda = \left\{W_1, \dots, W_k, \dots \right\}$ so that 
\[ \dim \spanq{W_1, \dots, W_j} > \dim \spanq{W_1, \dots, W_{j-1}}
\] for $j \leq k$, where $\dim {(\Lambda)}= \dim (\spanq{W_1, \dots, W_k}) > n-m$. Now $\Lambda$ is an independent set for $M=M_n(q)$ (see Section~\ref{preliminary} for the definition of independent set for the algebra of $n$-by-$n$ matrices over the finite field $\mathbb{F}_q$). Consider the following iterated process for $i \leq k$:
\begin{itemize}

 \item[\textbf{Step 1}] Let $W_1 \in \Omega$. We define $W = W_1$. We recall that we may assume, without loss of generality, that $W = \spanq{e_1, \dots, e_m}$ and that
\[ M_W = \left\{
\left(\begin{array}{@{}c|c@{}}
	\begin{matrix}
	A
	\end{matrix}
	& \begin{matrix}
	0 
	\end{matrix}\\
	\hline
\begin{matrix} B
\end{matrix} &
\begin{matrix}
	C
	\end{matrix}
	\end{array}\right) \mid A \in \M{m}, B \in \M{n-m,m}, C \in \M{n-m}  \right\}. \]

\item[\textbf{Step $i$}] We have $W_1, \dots, W_{i-1}$ and we define $W = \spanq{W_1, \dots, W_{i-1}}$. Let $\dim W = h$ and assume, without loss of generality, that $W=\spanq{e_1, \dots, e_{h}}$. Since we have chosen a specific ordering, we have that $W_i$ satisfies 
$$\dim \spanq{W_1, \dots, W_i} > \dim W.$$
Consider $V / W$ and $(W_i+W) / W\cong W_i/(W_i\cap W)$. Now let $\dim (W_i+W)/W= j$, observe that $j \in \left\{1, \dots, m\right\}$. Then we may assume
\[ W_i = \spanq{v_1, \dots, v_{m-j}, e_{h+1}, \dots, e_{h+j}},
\] for some $v_1, \dots, v_{m-j} \in W$ and $e_{h+1}, \dots, e_{h+j} \notin W$.

Hence $\dim \left(W \cap W_i\right) = m-j$ and we can write
\[ W_i = (W \cap W_i) \oplus \spanq{e_{h+1}, \dots, e_{h+j}}.
\]

Let $r\in \{h+1,\ldots,h+j\}$ and let $g \in M_{W_1, \dots, W_i}$; observe that $e_{r}^g \in W_i$. Hence $e_{r}^g = u_1 + u_2$, for some $u_1 \in W \cap W_i$ and $u_2 \in \spanq{e_{h+1}, \dots, e_{h+j}}$. Therefore, we have
\[ \dim \spanq{\right\{ e_r^g \mid g \in M_{W_1, \dots, W_i} \left\}}\le m,\,\,\, \mbox{ for all } r =h+1, \dots, h+j. 
\]
\end{itemize}

This iteration will end after $k$ steps and hence \[ \dim M_{W_1, \dots, W_{k}} \leq m(n-s) + ns,
\] where $s:=n-\dim \langle W_1,\ldots,W_k\rangle$. We conclude that $|\Lambda| \leq k+ m(n-s) + ns$ and so
we have
\[ \Height(G) \leq k+ m(n-s) + ns.
\]
Now observe that $k$ is maximum when $j=1$ at each step, in which case $k = n-m-s$. Hence
\begin{equation}\label{eqH0} 
\Height(G) \leq (n-m-s)+ ns+ m(n-s).
\end{equation}
By Lemma \ref{lemma6}, we have $s\le m-1.$ We conclude that the right-hand-side of $\eqref{eqH0}$ is maximized when $s=m-1$. Then
\[ \Height(G) \leq 2mn-m^2-m+1, 
\] and, as $m \geq 1$ implies $m^2+m-1 >0$, we conclude that
\begin{equation*} 
\Height(G) < 2mn.
\end{equation*}
Now, observe that
\[ \frac{mn}{m(n-m)} \leq 2 \Leftrightarrow m \leq \frac{n}{2}.
\] 
Then, as $\log q \geq 1$, we have
\[  \frac{mn}{m(n-m)} \leq 2 \leq 2 \log q.
\] This implies that
\[2mn \leq 4 \log q^{m(n-m)}
\]
Hence, using \eqref{eq: t bound}, we obtain 
\[\Height(G) < 2mn < 4 \log t.\qedhere
\]
\end{proof}

Note that, in the second author's thesis, a precise version of Lemma~\ref{l: pgl} is given for the case $m=1$. It turns out that, for $\PSL_n(q)\unlhd G\leq \PGL_n(q)$ acting on $\Omega$, the set of all $1$-spaces in $V$, we have
\[
 \Height(G,\Omega)=\begin{cases}
                    n, & \textrm{if $q=2$}; \\
                    2n-2, & \textrm{if $q>2$}.
                   \end{cases}
\]

\subsection{\texorpdfstring{$G\not\leq \PGL_n(q)$}{G not in PGL(n,q)}}

We can finally prove Lemma~\ref{l: psl}.

\begin{proof}[Proof of Lemma~\ref{l: psl}]
Let $q=p^f$. Suppose, first, that
\[ \PSL_n(q) \unlhd G \leq  \PGammaL_n(q) = \PGL_n(q) \rtimes \C_f.
\]
Now Lemma~\ref{l: chain} implies that
\[ \Height(G) \leq \Height(\PGL_n(q)) + \ell(\C_f),
\] 
where $\ell(\C_f)$ is the maximum length of a strictly descending chain of subgroups in $\C_f$. Since $\ell(\C_f) \leq \log f =\log \log_p q$,  we obtain 
\begin{equation*} 
\Height(G) < \Height(\PGL_n(q))+\log \log_p{q}.
\end{equation*}
From Lemma~\ref{l: pgl}, we immediately deduce~\eqref{eq:197} when $m<n/2$ and we also get
\begin{equation*} 
\Height(G) < 4\log t+\log \log_p{q}.
\end{equation*}
By \eqref{eq: t bound}, we easily conclude $\log \log_pq<\log t$ and hence
\[ \Height(G) < 5 \log t.
\]
We must deal with the possibility that $G\not\leq \PGammaL_n(q)$, i.e. $G$ contains a graph-automorphism. In this case $n\geq 4$, $m=\frac{n}{2}$ and $G$ contains a subgroup, $H$, of index $2$ that lies in $\PGammaL_n(q)$. By the argument above, we know that
\[
 \Height(H)<5\log t.
\]
Now Lemma~\ref{l: chain} implies that
\[
 \Height(G)<5\log t+1 <\frac{11}{2}\log t. \qedhere
\]
\end{proof}

We remark that in Lemma~\ref{l: pgl} we chose a particular ordering of our independent set $\Lambda$ when we came to study the associated stabilizer chain. We can do this because we are studying $\Height(G)$ rather than $\Irred(G)$ -- it is precisely this ordering step which has prevented us from proving a strong enough version of Theorem~\ref{t: irred} to confirm Conjecture~\ref{conj: I}.

\subsection{\texorpdfstring{$G_0=\PSL_n(q)$}{G0=PSL(n,q)} and elements of \texorpdfstring{$\Omega$}{Omega} are pairs of subspaces}

In this section we will consider the primitive subspace actions of those almost simple groups with socle $\mathrm{PSL}_n(q)$ that contain an automorphism of the Dynkin diagram. 

Let $n \geq 3$, and fix $m$, an integer satisfying $1\le m<\frac{n}{2}$.  We denote by $\Omega_m$ the set of all $m$-subspaces of $V$. 
We consider the action of $G$ on $\Omega^{(i)}$, for $i=1,2$, where

\begin{align*} &\Omega^{(1)}= \left\{ \{U,W\} \mid  U,W \leq V, \dim{U}=m, \dim{W}=n-m \text{ with } m< \frac{n}{2} \text{ and } U \oplus W= V \right\},\\
&\Omega^{(2)}= \left\{ \{U,W\} \mid  U,W \leq V, \dim{U}=m, \dim{W}=n-m \text{ with } m< \frac{n}{2} \text{ and } U<W \right\}.
\end{align*} 

The main result of this section is the following.

\begin{lem}\label{l: psl 2} 
Let $G$ be an almost simple group with socle $\PSL_n(q)$ acting on $\Omega^{(1)}$ or $\Omega^{(2)}$. Then
\[ \Height(G) < \frac{17}{2}\log t.
\]
\end{lem}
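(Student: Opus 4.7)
The strategy is to reduce the action on pairs to the actions on $m$- and $(n-m)$-subspaces separately, then invoke Lemma~\ref{l: psl}. Set $H := G \cap \PGammaL_n(q)$, so that $[G:H] \leq 2$; by Lemma~\ref{l: chain} we have $\Height(G,\Omega^{(i)}) \leq \Height(H,\Omega^{(i)}) + 1$. Since $m < n/2$ forces $\dim U \neq \dim W$ and $H$ preserves dimension, the $H$-action on $\Omega^{(i)}$ may be identified with the natural $H$-action on a subset of $\Omega_m \times \Omega_{n-m}$, in which the stabilizer of $\{U,W\}$ equals $H_U \cap H_W$.

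Any independent set in a subaction is a fortiori independent in the ambient action (stabilizers coincide), so $\Height(H,\Omega^{(i)}) \leq \Height(H,\Omega_m \times \Omega_{n-m})$. Via the diagonal embedding $H \hookrightarrow H \times H$, combining Lemmas~\ref{l: subgroup} and~\ref{l: viva} gives
\[
\Height(H,\Omega_m \times \Omega_{n-m}) \leq \Height(H,\Omega_m) + \Height(H,\Omega_{n-m}).
\]
Since $m < n/2$, $\min(m,n-m) = m$, so applying \eqref{eq:197} of Lemma~\ref{l: psl} to both summands yields $\Height(H,\Omega^{(i)}) < 4mn + 2\log\log_p q$, and therefore $\Height(G,\Omega^{(i)}) < 4mn + 2\log\log_p q + 1$.

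To finish, I will lower-bound $\log t$ using $\binom{a}{b}_q \geq q^{b(a-b)}$. For $\Omega^{(1)}$, $t = \binom{n}{m}_q q^{m(n-m)} \geq q^{2m(n-m)}$, so $\log t \geq mn\log q$ (using $n-m \geq n/2$), and hence $4mn \leq 4\log t$. For $\Omega^{(2)}$, $t = \binom{n}{m}_q \binom{n-m}{m}_q \geq q^{m(2n-3m)}$; since $m < n/2$ forces $2n - 3m > n/2$, this gives $4mn \leq 8\log t$. In either case we conclude
\[
\Height(G,\Omega^{(i)}) < 8\log t + 2\log\log_p q + 1,
\]
which sits comfortably below $\tfrac{17}{2}\log t$ once $\log t$ is suitably large.

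The main obstacle is that the slack $\tfrac{1}{2}\log t = \tfrac{17}{2}\log t - 8\log t$ must absorb the term $2\log\log_p q + 1$; this requires $\log t > 4\log\log_p q + 2$. Since $\log t$ is proportional to a positive multiple of $\log q$ while $\log\log_p q$ grows only like $\log\log q$, this holds for all but finitely many $(n,m,q)$, and the remaining small cases can be disposed of by direct verification (or by noting that the bound $4mn \leq 8\log t$ is strict except in the asymptotic limit).
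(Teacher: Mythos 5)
Your proof is correct, and the central step takes a genuinely different route from the paper. Where the paper proves the inequality
\[
\Height(H,\Omega^{(i)}) \le \Height(H,\Omega_m)+\Height(H,\Omega_{n-m})
\]
(Lemma~\ref{l: k}) by setting up the bipartite graph $\Gamma$ on $\mathcal{U}\cup\mathcal{W}$, showing it has no paths of length three, and then splitting into the complete-matching and the general cases, you obtain the same inequality in three lines: identify $\Omega^{(i)}$ with an $H$-invariant subset of $\Omega_m\times\Omega_{n-m}$ with matching point stabilizers (possible since $m<n/2$ forces distinct dimensions), observe that diagonal $H$ is a subgroup of $H\times H$, and apply Lemma~\ref{l: subgroup} followed by Lemma~\ref{l: viva}. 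The fact that the projections of an independent set need not remain independent — the very obstruction the paper flags before introducing $\Gamma$ — is already absorbed inside the proof of Lemma~\ref{l: viva} via Lemma~\ref{l: subset}, so nothing further is needed. This is cleaner and shorter, and it shows the graph machinery, while instructive, is not strictly necessary. The reduction to $H=G\cap\PGammaL_n(q)$ via Lemma~\ref{l: chain} and the use of~\eqref{eq:197} on each factor match the paper exactly, and you arrive at the same intermediate bound $\Height(G,\Omega^{(i)})<4mn+2\log\log_p q+1$ as the paper's~\eqref{eq:4.11}.

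One remark on the closing arithmetic. You convert $4mn$ to a multiple of $\log t$ and then ask for $\tfrac12\log t$ to absorb $2\log\log_p q+1$; this is less economical than the paper's step, which keeps the sharper lower bound $\log t\ge m(n-m)\log q$ and checks $4mn+2\log\log_p q+1<\tfrac{17}{2}m(n-m)\log q$ directly. Your approach works — the worst case for $\Omega^{(2)}$ is $(n,m)=(3,1)$, where $\log t>3\log q$ and one verifies $2\log\log_p q+1<\tfrac{3}{2}\log q$ for all prime powers $q$ — but ``disposed of by direct verification'' should be made explicit, since the inequality is actually somewhat tight at $(n,m,q)=(3,1,4)$. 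It would be cleaner to avoid the two-stage estimate and close as the paper does.
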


We set $t^{(i)} := |\Omega^{(i)}|$, for $i=1,2$. Then we have
\[t^{(1)}=\frac{q^{m(n-m)}\prod_{i=1}^{n}{(q^i-1)}}{\prod_{i=1}^{m}{(q^i-1)}\prod_{i=1}^{n-m}{(q^i-1)}} \,\,\textrm{    and   }\,\, t^{(2)}=\frac{\prod_{i=1}^{n}{(q^i-1)}}{\prod_{i=1}^{m}{(q^i-1)^2}\prod_{i=1}^{n-2m}{(q^i-1)}}.\]

We saw already, in \eqref{eq: t bound}, that $|\Omega_m|>q^{m(n-m)}$. We obtain immediately that
\begin{equation}\label{eq: t bound 2}
 t^{(i)}\geq q^{m(n-m)}.
\end{equation}

For the time being, for various technical reasons, we consider $G\le \PGammaL_n(q)$ (although the corresponding actions are not primitive any longer). We simultaneously study the action of $G$ on $\Omega^{(1)}$ and $\Omega^{(2)}$. Let $\Lambda \subset \Omega^{(i)}$ be an independent set for the action of $G$ on $\Omega^{(i)}$, with $k:=|\Lambda|$. We define 
\begin{align} 
 &\mathcal{U} = \lbrace  U\in\Omega_m\mid \textrm{there exists }W\in \Omega_{n-m} \textrm{ with }\{U,W\}\in\Lambda \rbrace; \\
 & \mathcal{W} =\lbrace  W\in\Omega_{n-m}\mid \textrm{there exists }U\in \Omega_{m} \textrm{ with }\{U,W\}\in\Lambda \rbrace.
\end{align}
Let $k_U:=|\mathcal{U}|$ and $k_W:=|\mathcal{W}|$. Clearly, $k_U,k_W \leq |\Lambda|=k$.

Notice that, since $G\le\PGammaL_n(q)$, $G$ acts naturally on $\Omega_m$ and $\Omega_{n-m}$ however observe that, \textit{a priori}, the independence of $\Lambda$ does not imply the independence of $\Lambda_U$ and $\Lambda_W$. The result that we need is the following.

\begin{lem}\label{l: k}
$k \leq \Height(G,\Omega_m)+ \Height(G,\Omega_{n-m}).$
\end{lem}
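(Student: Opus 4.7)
The plan is to adapt, almost verbatim, the argument used in Lemma~\ref{l: viva} (the bound on $\Height$ for direct products) to the present ``diagonal'' setting, where $G$ acts simultaneously on the two subspace sets $\Omega_m$ and $\Omega_{n-m}$. Since $m<n/2$, the two components of every pair in $\Omega^{(i)}$ are distinguishable by dimension, so without loss of generality I can write $\Lambda=\{\{U_1,W_1\},\dots,\{U_k,W_k\}\}$ with $\dim U_j=m$ and $\dim W_j=n-m$, and observe that $G_{(\{U_j,W_j\})}=G_{U_j}\cap G_{W_j}$.

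First, apply Lemma~\ref{l: subset} to the (possibly non-independent) set $\mathcal{U}=\{U_1,\dots,U_k\}\subseteq\Omega_m$ acted on by $G$, obtaining an independent subset which, after relabelling, can be taken to be $\{U_1,\dots,U_s\}$ with
\[
H:=G_{U_1}\cap\cdots\cap G_{U_s}=G_{U_1}\cap\cdots\cap G_{U_k}.
\]
This immediately gives $s\le\Height(G,\Omega_m)$. The crux of the argument is then to show that the remaining ``tail'' $\Delta:=\{W_{s+1},\dots,W_k\}$ is independent for the action of $H$ on $\Omega_{n-m}$, which by Lemma~\ref{l: subgroup} would yield $k-s\le\Height(H,\Omega_{n-m})\le\Height(G,\Omega_{n-m})$ and finish the proof.

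To verify this independence, fix $j_0\in\{s+1,\dots,k\}$. By independence of $\Lambda$ in $\Omega^{(i)}$, there exists $g\in\bigcap_{j\neq j_0}G_{(\{U_j,W_j\})}$ with $g\notin G_{(\Lambda)}$. Since $\{1,\dots,s\}\subseteq\{1,\dots,k\}\setminus\{j_0\}$, such a $g$ lies in $G_{U_1}\cap\cdots\cap G_{U_s}=H$, and since this intersection already equals $\bigcap_{j=1}^k G_{U_j}$, the element $g$ automatically fixes $U_{j_0}$ as well. Therefore the only way $g$ can fail to stabilize $\{U_{j_0},W_{j_0}\}$ is that $g\notin G_{W_{j_0}}$. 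Thus $g\in H$ fixes every element of $\Delta\setminus\{W_{j_0}\}$ but not $W_{j_0}$, so by Lemma~\ref{l: stab}, $\Delta$ is independent for $H$ on $\Omega_{n-m}$, as required.

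The main technical point is the bookkeeping in the second paragraph: one has to exploit the equality $\bigcap_{j=1}^k G_{U_j}=\bigcap_{j=1}^s G_{U_j}$ twice — once to conclude $g\in H$ and once to conclude that $g$ already fixes $U_{j_0}$, which is exactly what forces the failure of stabilization of the pair $\{U_{j_0},W_{j_0}\}$ to be a failure on the $W$-coordinate alone. Once this is set up cleanly, no further input is needed; in particular the argument only uses that $G\le\PGammaL_n(q)$ so that $G$ acts coherently on both $\Omega_m$ and $\Omega_{n-m}$.
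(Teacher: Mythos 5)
Your proof is correct, and it is noticeably more direct than the paper's. The paper first introduces an auxiliary bipartite graph $\Gamma$ on $\mathcal{U}\cup\mathcal{W}$ with edge set $\Lambda$, proves that $\Gamma$ cannot contain a path of length three (Lemma~\ref{lem:19}), and then splits the argument into the case where $\Gamma$ is a perfect matching and the general star-union case, with the latter requiring a careful bookkeeping bijection $\Lambda\setminus\Lambda_{\mathcal{U}_1}\to\mathcal{W}_2$. You bypass all of this by adapting the direct-product argument of Lemma~\ref{l: viva} at the level of the indices $1,\dots,k$ rather than at the level of the graph: after using Lemma~\ref{l: subset} to pin down $s$ indices whose $U$-coordinates control $\bigcap_j G_{U_j}$, you show that each remaining $W_{j_0}$ is strictly needed in the stabilizer of the tail inside $H$, and the existence of the separating element $g$ simultaneously forces $W_{j_0}$ to be distinct from every other $W_j$. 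This last point — that the tail $\{W_{s+1},\dots,W_k\}$ really has $k-s$ distinct elements — is not stated in your write-up but is a genuine consequence of the $g$ you produce, and is exactly the content that the paper's Lemma~\ref{lem:19} and bijection $f$ are labouring to establish; it would be worth making it explicit. Both proofs rely on $G\le\PGammaL_n(q)$ in the same place, namely to identify the setwise stabilizer of a pair $\{U,W\}$ with $G_U\cap G_W$ via the dimension distinction $m<n/2$. Overall your route is shorter, avoids the graph-structural lemma entirely, and makes the analogy with Lemma~\ref{l: viva} transparent, at the small cost of leaving the distinctness of the tail implicit.
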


In order to prove Lemma~\ref{l: k} we first need to define a graph $\Gamma$ associated with $\Lambda$ in the following way: $\Gamma= (V,\Lambda)$ where $V= \mathcal{U} \cup \mathcal{W}$ is the vertex set and the edge set is $\Lambda$. 

From here, we will first prove a general fact about the length of paths in the graph $\Gamma$, and we will then split our proof of Lemma~\ref{l: k} into two steps: the first is a special case which illustrates the general case; this general case will come second.

\begin{lem}\label{lem:19}The graph $\Gamma$ can only have paths of length at most 2.
\end{lem}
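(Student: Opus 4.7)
The plan is to use independence of $\Lambda$ together with the simple observation that, because $m<n/2$, the stabilizer of an unordered pair $\{U,W\}\in\Omega^{(i)}$ with $\dim U=m$ and $\dim W=n-m$ coincides with the stabilizer of the ordered pair. Indeed no element of $G\le\PGammaL_n(q)$ can send an $m$-space to an $(n-m)$-space, so
\[
G_{\{U,W\}}=G_U\cap G_W.
\]
Therefore the pointwise stabilizer of $\Lambda$ can be rewritten as an intersection of the vertex-stabilizers of $\Gamma$, namely
\[
G_{(\Lambda)}=\bigcap_{\{U,W\}\in\Lambda}G_U\cap G_W=\bigcap_{v\in V(\Gamma)}G_v.
\]

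With this reformulation in hand, I would argue by contradiction. Suppose $\Gamma$ contains a path of length at least $3$. Since $\Gamma$ is bipartite (one part consisting of $m$-spaces, the other of $(n-m)$-spaces), I may extract a subpath with exactly three edges $e_1=\{v_1,v_2\}$, $e_2=\{v_2,v_3\}$, $e_3=\{v_3,v_4\}$, all belonging to $\Lambda$, and with the four vertices pairwise distinct. Now remove the middle edge and consider $\Lambda':=\Lambda\setminus\{e_2\}$, together with the associated graph $\Gamma'=(V',\Lambda')$.

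The key observation is that $V'=V(\Gamma)$: the vertex $v_2$ remains incident to $e_1\in\Lambda'$, and $v_3$ remains incident to $e_3\in\Lambda'$, so no vertex of $\Gamma$ is lost on passing to $\Gamma'$. Applying the reformulation above to both $\Lambda$ and $\Lambda'$ gives
\[
G_{(\Lambda')}=\bigcap_{v\in V'}G_v=\bigcap_{v\in V(\Gamma)}G_v=G_{(\Lambda)},
\]
contradicting the independence of $\Lambda$ (Lemma~\ref{l: stab}). Hence $\Gamma$ contains no path of length $\ge 3$, as claimed.

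The only subtle point is to justify that the setwise and pointwise stabilizer of $\{U,W\}$ agree; this uses that $\dim U\neq\dim W$, which is exactly the hypothesis $m<n/2$ imposed at the start of the section. Everything else is a direct consequence of writing $G_{(\Lambda)}$ as an intersection over vertices, and observing that the endpoints of an internal edge of a path are covered by the two neighbouring edges.
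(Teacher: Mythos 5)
Your proof is correct and is essentially the paper's argument: both rest on the observation that, because $G\le\PGammaL_n(q)$ preserves the bipartition (equivalently $\dim U\neq\dim W$ since $m<n/2$), the setwise stabilizer $G_{\{U,W\}}$ equals $G_U\cap G_W$, and hence removing the middle edge of a length-$3$ path leaves the pointwise stabilizer of $\Lambda$ unchanged. The paper carries out this computation explicitly on the three edges of the path, whereas you phrase it as the vertex set of $\Gamma$ being unchanged under deleting an interior edge; these are the same idea.
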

\begin{proof}
Assume that in $\Gamma$ we have a path of length at least 3. Then, relabelling if necessary, we have
\begin{center}

\label{fig:4.2} \begin{tikzpicture} 
\draw (0,0) --(3,0);
\draw (0,0) --(3,-1);
\draw (0,-1)--(3,-1);
\draw [fill] (0,0) circle [radius=0.1];
\node [left] at (0,0) {$U_i$};
\draw [fill] (0,-1) circle [radius=0.1];
\node [left] at (0,-1) {$U_{i+1}$};
\draw [fill] (3,0) circle [radius=0.1];
\node [right] at (3,0) {$W_j$};
\draw [fill] (3,-1) circle [radius=0.1];
\node [right] at (3,-1) {$W_{j+1}$};
\end{tikzpicture}
\end{center} 
for some $1 \leq i < k_U$, $1 \leq j < k_W$.
As $G\le \PGammaL_n(q)$, $G$ preserves the two parts $\Lambda_U$ and $\Lambda_W$ of the natural bipartition of $\Gamma$. Hence
$$G_{\{U_i,W_j\},\{U_{i+1},W_{j+1}\}}=G_{U_i}\cap G_{W_j}\cap G_{U_{i+1}}\cap G_{W_{j+1}}=G_{\{U_i,W_j\},\{U_{i},W_{j+1}\},\{U_{i+1},W_{j+1}\}}.$$ As $\{U_i,W_{j}\}$, $\{U_{i},W_{j+1}\}$, $\{U_{i+1},W_{j+1}\}$ are elements in $\Lambda$,  this contradicts the independence of $\Lambda$.
\end{proof}

\subsection{Suppose that \texorpdfstring{$\Gamma$}{Gamma} is a complete matching}
In other words we suppose $\Gamma$ has no paths of length $2$. This case will illustrate the general argument very well.

In this case $\Lambda=\{\{U_1,W_1\},\ldots,\{U_k,W_k\}\}$. Let us consider the action of $G$ on $\Omega_m$. By Lemma \ref{l: subset}, there exists $\mathcal{I} \subset \{1, \dots, k\}$, say $\mathcal{I}= \{i_1, \dots, i_s\}$, such that $\{U_{i_1}, \dots, U_{i_s}\}$ is an independent set and 
$$ \bigcap_{j=1}^{k}{G_{U_i}}=\bigcap_{j=1}^{s}{G_{U_{i_j}}}. $$
After relabelling elements, we can assume $\mathcal{I}= \{1, \dots, s\}$. 
This yields $$s \leq \Height(G, \Omega_{m}).$$ 
Let $$H := \bigcap_{i=1}^{s}{\left(G_{U_i} \cap G_{W_i}\right)}$$ and consider its action on $\Omega^{(i)}$, for $i \in \{1,2\}$. Recall that, by Lemma \ref{l: subgroup}, the set $$\Delta=\{\{U_{s+1}, W_{s+1}\}, \dots, \{U_k,W_k\}\}$$ is independent with respect to $H$. Now consider the action of $H$ on $\Omega_{n-m}$.

\begin{lem} The set $\{W_{s+1}, \dots, W_k\}$ is independent with respect to the action of $H$ on $\Omega_{n-m}$.
\end{lem}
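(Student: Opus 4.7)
The plan is to reduce the independence claim for $\{W_{s+1},\dots,W_k\}$ in $\Omega_{n-m}$ to the already-known independence of $\Delta=\{\{U_{s+1},W_{s+1}\},\dots,\{U_k,W_k\}\}$ in $\Omega^{(i)}$ by showing that, inside $H$, the stabilizers of the pairs coincide with the stabilizers of the $W$-components. Two facts drive this: (a) elements of $G$ preserve dimension, since $G\le \PGammaL_n(q)$; and (b) the subset $\{1,\ldots,s\}$ was chosen so that $\bigcap_{i=1}^{s}G_{U_i}=\bigcap_{i=1}^{k}G_{U_i}$.

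First I would use (a), together with $m<n-m$, to observe that for each $j\ge s+1$ any element of $G$ which stabilises the unordered pair $\{U_j,W_j\}$ must fix $U_j$ and $W_j$ individually; in particular $H_{\{U_j,W_j\}}=H_{U_j}\cap H_{W_j}$. Next I would use (b): since $H\le \bigcap_{i=1}^{s}G_{U_i}\le G_{U_j}$ for every $j\ge s+1$, we have $H_{U_j}=H$, and therefore
\[
 H_{\{U_j,W_j\}}=H_{W_j}\qquad\text{for all }j\in\{s+1,\dots,k\}.
\]

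With this identification, the independence of $\Delta$ for $H$, phrased via Lemma~\ref{l: stab}, reads
\[
 \bigcap_{i=s+1}^{k}H_{W_i}=\bigcap_{i=s+1}^{k}H_{\{U_i,W_i\}} \;<\; \bigcap_{\substack{i=s+1\\ i\neq j}}^{k}H_{\{U_i,W_i\}}=\bigcap_{\substack{i=s+1\\ i\neq j}}^{k}H_{W_i}
\]
for every $j\in\{s+1,\dots,k\}$, which is exactly the condition for $\{W_{s+1},\dots,W_k\}$ to be an independent set for $H$ acting on $\Omega_{n-m}$, again by Lemma~\ref{l: stab}. There is no real obstacle here; the only subtlety is that one must not forget why the pair-stabiliser collapses to the component-stabilisers inside $H$ (namely, the dimension restriction $m\neq n-m$ together with the defining property of $\mathcal{I}$).
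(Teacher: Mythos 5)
Your proof is correct and rests on the same two facts the paper uses: that $m\neq n-m$ together with $G\le\PGammaL_n(q)$ forces $G_{\{U_j,W_j\}}=G_{U_j}\cap G_{W_j}$, and that the choice of $\mathcal{I}$ forces $\bigcap_{i=1}^{s}G_{U_i}=\bigcap_{i=1}^{k}G_{U_i}$, so $H\le G_{U_j}$ for every $j$. The difference is in how these facts are deployed. You isolate the clean identity $H_{\{U_j,W_j\}}=H_{W_j}$ for $j>s$ and then transport the independence of $\Delta$ with respect to $H$ across this identification; this is a direct argument. The paper instead argues by contradiction: assuming $\{W_{s+1},\ldots,W_k\}$ is not independent for $H$, it unwinds a chain of equalities to arrive at $G_{(\Lambda)}=G_{(\Lambda\setminus\{\{U_j,W_j\}\})}$, contradicting the independence of $\Lambda$ directly, without ever invoking the independence of $\Delta$ as a stepping stone. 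One small caution about your route: it leans on the assertion, made just before the lemma, that $\Delta$ is independent with respect to $H$. That assertion is true, but the text attributes it to Lemma~\ref{l: subgroup}, which in fact only transfers independence \emph{up} from a subgroup to an overgroup, not down. The claim is nonetheless easy to verify directly (one has $H_{(\Delta)}=G_{(\Lambda)}$ and $H_{(\Delta\setminus\{\lambda\})}=G_{(\Lambda\setminus\{\lambda\})}$ for each $\lambda\in\Delta$, using the same two facts as above), but your argument inherits whatever justification one supplies for it, whereas the paper's contradiction argument is self-contained and sidesteps the issue.
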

\begin{proof} Assume, for contradiction, that the set $\{W_{s+1}, \dots, W_k\}$ is not independent with respect to $H$. Then, by Lemma~\ref{l: stab}, there would exist $j \in \{s+1, \dots k\}$ such that

$$\bigcap_{i=s+1}^{k}{H_{W_i}}=\bigcap_{\substack{i=s+1\\i \neq j}}^{k}{H_{W_i}}. $$

Observe that
\[ \bigcap_{i=1}^{s}{G_{U_i}}=\bigcap_{i=1}^{k}{G_{U_i}}=\bigcap_{\substack{i=1\\i \neq j}}^{k}{G_{U_i}}.
\] Therefore we have
\begin{align*} 
G_{(\Lambda)}&= H \cap \bigcap_{i=s+1}^{k}{G_{W_i}} = \bigcap_{i=s+1}^{k}{H_{W_i}} = \bigcap_{\substack{i=s+1 \\ i \neq j}}^{k}{H_{W_i}} \\
&= H \cap \left(\bigcap_{\substack{i=s+1 \\ i \neq j}}^{k}{G_{W_i}}\right) = \left( \bigcap_{i=1}^{s}{G_{U_i}} \cap \bigcap_{i=1}^{s}{G_{W_i}} \right) \cap \left(\bigcap_{\substack{i=s+1 \\ i \neq j}}^{k}{G_{W_i}} \right) \\
&= \left( \bigcap_{\substack{i=1 \\ i \neq j}}^{k}{G_{U_i}}  \right) \cap \left( \bigcap_{\substack{i=1 \\ i \neq j}}^{k}{G_{W_i}} \right) = \bigcap_{\substack{i=1 \\ i \neq j}}^{k}{\left(G_{U_i} \cap G_{W_i}\right)} \\
&= G_{(\Lambda \setminus \{\{U_j,W_j\}\})}.
\end{align*}
This contradicts the independence of $\Lambda$.
\end{proof}
The previous lemma implies that
\[k-s \leq \Height(H, \Omega_{n-m}).
\]
Now, by Lemma~\ref{l: subgroup}, we have $\Height(H, \Omega_{n-m}) \leq \Height(G, \Omega_{n-m})$.
Putting these things together yields that 
\[k \leq \Height(G,\Omega_m)+ \Height(G,\Omega_{n-m}),\]
and Lemma~\ref{l: k} is proved in this special case.

\subsection{Suppose that \texorpdfstring{$\Gamma$}{Gamma} is not a complete matching}
The general argument is very similar but requires some more notation. After reordering the vertices of $\Gamma$ and using a suitable labelling, the fact that $\Gamma$ has no paths of length $3$ implies that $\Gamma$ is isomorphic to the graph in Figure \ref{fig:4.4}, where $\ell_1,\dots,\ell_b, s_1,\ldots, s_c \geq 2$.

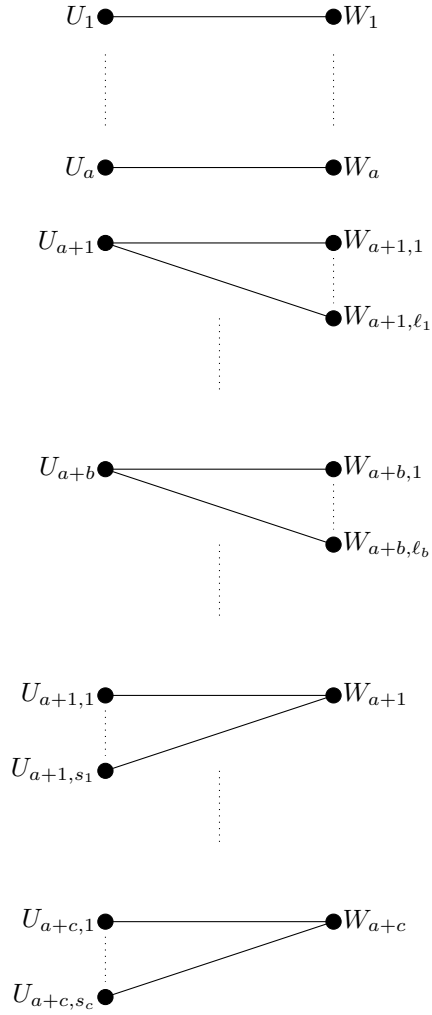
\begin{figure}
\centering

\begin{tikzpicture}
\draw (0,0) --(3,0);
\draw (0,-2) --(3,-2);
\draw[dotted](0,-.5)--(0,-1.5);
\draw[dotted](3,-.5)--(3,-1.5);
\draw (0,-3) --(3,-3);
\draw (0,-3) --(3,-4);
\draw[dotted](3,-3.2)--(3,-3.8);
\draw[dotted](1.5,-4)--(1.5,-5);
\draw (0,-6) --(3,-6);
\draw (0,-6) --(3,-7);
\draw[dotted](3,-6.2)--(3,-6.8);
\draw[dotted](1.5,-7)--(1.5,-8);
\draw[dotted](1.5,-10)--(1.5,-11);
\draw (0,-9) --(3,-9);
\draw (0,-10) --(3,-9);
\draw [dotted](0,-9.2)--(0,-9.8);
\draw (0,-12) --(3,-12);
\draw (0,-13) --(3,-12);
\draw [dotted](0,-12.2)--(0,-12.8);

\draw [fill] (0,0) circle [radius=0.1];
\node [left] at (0,0) {$U_1$};
\draw [fill] (0,-2) circle [radius=0.1];
\node [left] at (0,-2) {$U_a$};
\draw [fill] (0,-3) circle [radius=0.1];
\node [left] at (0,-3) {$U_{a+1}$};
\draw [fill] (0,-6) circle [radius=0.1];
\node [left] at (0,-6) {$U_{a+b}$};
\draw [fill] (0,-9) circle [radius=0.1];
\node [left] at (0,-9) {$U_{a+1,1}$};
\draw [fill] (0,-10) circle [radius=0.1];
\node [left] at (0,-10) {$U_{a+1,s_1}$};
\draw [fill] (0,-12) circle [radius=0.1];
\node [left] at (0,-12) {$U_{a+c,1}$};
\draw [fill] (0,-13) circle [radius=0.1];
\node [left] at (0,-13) {$U_{a+c,s_c}$};

\draw [fill] (3,0) circle [radius=0.1];
\node [right] at (3,0) {$W_1$};
\draw [fill] (3,-2) circle [radius=0.1];
\node [right] at (3,-2) {$W_a$};
\draw [fill] (3,-3) circle [radius=0.1];
\node [right] at (3,-3) {$W_{a+1,1}$};
\draw [fill] (3,-4) circle [radius=0.1];
\node [right] at (3,-4) {$W_{a+1,\ell_1}$};
\draw [fill] (3,-6) circle [radius=0.1];
\node [right] at (3,-6) {$W_{a+b,1}$};
\draw [fill] (3,-7) circle [radius=0.1];
\node [right] at (3,-7) {$W_{a+b,\ell_b}$};
\draw [fill] (3,-9) circle [radius=0.1];
\node [right] at (3,-9) {$W_{a+1}$};
\draw [fill] (3,-12) circle [radius=0.1];
\node [right] at (3,-12) {$W_{a+c}$};

\end{tikzpicture}
\caption{The graph $\Gamma$ in the general case.}
\label{fig:4.4}
\end{figure}

 Observe that, by definition, we have $k_U$ vertices on the left, $k_W$ vertices on the right and $k$ edges, because these are the elements of $\Lambda$. 
Let us consider the action of $G$ on $\Omega_m$, that is we focus on the action of $G$ on the vertices on the left-hand side of the graph. By Lemma \ref{l: subset}, there exists a subset $\mathcal{U}_1$ of $\mathcal{U}$, such that $\mathcal{U}_1$ is an independent set and 
\begin{equation}\label{eq1230} \bigcap_{U \in \mathcal{U}}{G_{U}}= \bigcap_{U \in \mathcal{U}_1}{G_{U}}.
\end{equation} Let $s := |\mathcal{U}_1|.$ We reorder the vertices on the left-hand side of the graph so that the elements of $\mathcal{U}_1$ occur in the top $s$ positions. Simultaneously, we take to the top exactly one edge, chosen arbitrarily, having one end in $U$, for each $U \in \mathcal{U}_1$. Note that here we allow crossings between edges. Observe that Lemma \ref{lem:19} implies that this operation is well-defined. We have
\[ s \leq \mathrm{H}(G,\Omega_m).
\]
Let $\Lambda_{\mathcal{U}_1} \subset \Lambda$ be the set of those chosen edges and note that $|\Lambda_{\mathcal{U}_1}|=s$. We define
\begin{align*} \mathcal{W}_1 &= \left\{ W \in \mathcal{W} \mid W \text{ is an end-point for an edge in } \Lambda_{\mathcal{U}_1}  \right\} \\
\mathcal{W}_2 &=\mathcal{W}\setminus \mathcal{W}_1.
\end{align*}
Observe that $|\mathcal{W}_1| \leq s$. 

\begin{lem}Let $H := G_{(\Lambda_{\mathcal{U}_1})}$. Let $\lambda \in \Lambda \setminus \Lambda_{\mathcal{U}_1}$ and let $U_{\lambda}$ and $W_{\lambda}$ be, respectively, its left-hand end-point and right-hand end-point. Then we have that
\begin{enumerate}
\item the function
\begin{align*} f \colon &\Lambda \setminus \Lambda_{\mathcal{U}_1} \to \mathcal{W}_2 \\
&\lambda \mapsto W_{\lambda}
\end{align*}
is a bijection. In particular, $|\mathcal{W}_2|=|\Lambda \setminus \Lambda_{\mathcal{U}_1}| =k-s$;
\item $G_{(\Lambda)}= H_{(\mathcal{W}_2)}$.
\end{enumerate}
\end{lem}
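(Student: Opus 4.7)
The plan is to handle the two parts in order, with part (2) reducing to a short chain of set-theoretic identities once (1) is in hand; the real content is the bijectivity of $f$.

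First I would establish the following structural claim: if $U \in \mathcal{U}$ has degree $1$ in $\Gamma$ and its unique neighbor $W \in \mathcal{W}$ has degree at least $2$, then $U \in \mathcal{U}_1$. To prove the claim I would invoke independence of $\Lambda$ at the edge $\{U,W\}$: because $W$ has other $\Lambda$-neighbors, removing $\{U,W\}$ deletes $U$ from $\mathcal{U}$ but keeps $W$ in $\mathcal{W}$, so there exists $g \in \bigcap_{U' \in \mathcal{U} \setminus \{U\}} G_{U'} \cap \bigcap_{W' \in \mathcal{W}} G_{W'}$ with $g \notin G_U$. If $U \notin \mathcal{U}_1$, then $\mathcal{U}_1 \subseteq \mathcal{U} \setminus \{U\}$ gives $g \in \bigcap_{U' \in \mathcal{U}_1} G_{U'}$, and \eqref{eq1230} forces $\bigcap_{U' \in \mathcal{U}_1} G_{U'} = \bigcap_{U' \in \mathcal{U}} G_{U'} \subseteq G_U$, contradicting $g \notin G_U$.

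With the claim in hand, well-definedness and injectivity of $f$ fall out together from Lemma~\ref{lem:19}, which forces every connected component of $\Gamma$ to be a single edge or a star. Let $\lambda = \{U_\lambda, W_\lambda\} \in \Lambda \setminus \Lambda_{\mathcal{U}_1}$. If $\lambda$ belonged to a star centered at $W_\lambda$ with at least two $\mathcal{U}$-leaves, then $U_\lambda$ would be such a leaf whose unique neighbor $W_\lambda$ has degree at least $2$; the claim would then put $U_\lambda \in \mathcal{U}_1$, and the unique edge at $U_\lambda$ (namely $\lambda$) would have to be the chosen edge in $\Lambda_{\mathcal{U}_1}$, contradicting $\lambda \notin \Lambda_{\mathcal{U}_1}$. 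Hence this case is excluded, and in the remaining configurations (a matching edge, or an edge of a star centered on the $\mathcal{U}$-side at $U_\lambda$) the vertex $W_\lambda$ has degree $1$ in $\Gamma$, so $\lambda$ is the only edge of $\Lambda$ incident to $W_\lambda$. Since $\lambda \notin \Lambda_{\mathcal{U}_1}$, no edge of $\Lambda_{\mathcal{U}_1}$ is incident to $W_\lambda$, giving $W_\lambda \in \mathcal{W}_2$; and any two edges of $\Lambda \setminus \Lambda_{\mathcal{U}_1}$ sharing a $W$-endpoint must coincide, giving injectivity. Surjectivity is immediate: for $W \in \mathcal{W}_2$ pick any $\lambda \in \Lambda$ incident to $W$; then $\lambda \notin \Lambda_{\mathcal{U}_1}$ (else $W \in \mathcal{W}_1$) and $f(\lambda) = W$.

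Part (2) is then a direct computation. Since $G \le \PGammaL_n(q)$ preserves the bipartition $\Omega_m \sqcup \Omega_{n-m}$ of the underlying sets, the stabilizer of each pair $\{U,W\} \in \Omega^{(i)}$ equals $G_U \cap G_W$, so $G_{(\Lambda)} = \bigcap_{U \in \mathcal{U}} G_U \cap \bigcap_{W \in \mathcal{W}} G_W$ and $H = \bigcap_{U \in \mathcal{U}_1} G_U \cap \bigcap_{W \in \mathcal{W}_1} G_W$. Using $\mathcal{W} = \mathcal{W}_1 \sqcup \mathcal{W}_2$ and \eqref{eq1230}, intersecting $H$ with $\bigcap_{W \in \mathcal{W}_2} G_W$ collapses $H_{(\mathcal{W}_2)}$ to $G_{(\Lambda)}$.

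The hardest part I expect is the careful bookkeeping in the structural claim: one must track exactly which vertices survive in $\mathcal{U}$ and $\mathcal{W}$ when a single edge is removed from $\Lambda$, and the extraction of ``$g$ not fixing $U$'' only goes through when the opposite endpoint $W$ has degree at least $2$. Lemma~\ref{lem:19} is precisely what is needed to classify components as matchings or stars and thereby control both the degrees and the effect of edge removal.
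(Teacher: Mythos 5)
Your proof is correct, and for part (1) it takes a genuinely different route than the paper's. The paper works directly: to show $f$ is well-defined it assumes $W_\lambda\in\mathcal{W}_1$ and uses \eqref{eq1230} to absorb $U_\lambda$ into $\bigcap_{U\in\mathcal{U}_1}G_U$, concluding $G_{(\Lambda_{\mathcal{U}_1})}=G_{(\Lambda_{\mathcal{U}_1}\cup\{\lambda\})}$ and hence $G_{(\Lambda)}=G_{(\Lambda\setminus\{\lambda\})}$, contradicting independence; injectivity is handled by the analogous absorption with $\{\lambda,\mu\}$. Your route instead isolates a structural claim --- that a degree-$1$ vertex $U\in\mathcal{U}$ whose unique neighbour $W$ has degree $\ge 2$ must lie in $\mathcal{U}_1$ --- and then feeds this through the path-length-$2$ restriction of Lemma~\ref{lem:19} (components are edges or stars) to conclude that every $W_\lambda$ with $\lambda\in\Lambda\setminus\Lambda_{\mathcal{U}_1}$ has degree $1$, from which well-definedness and injectivity both drop out at once. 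Your approach trades the paper's two short algebraic computations for one combinatorial claim plus a case split on the star structure; it leans more explicitly on Lemma~\ref{lem:19}, which the paper uses only in the background to draw Figure~\ref{fig:4.4}, and it has the pleasant feature of unifying well-definedness and injectivity under a single degree condition. Both arguments ultimately rest on \eqref{eq1230} and the independence of $\Lambda$. Part (2) in your write-up is the same computation as the paper's, just phrased as ``collapse $H_{(\mathcal{W}_2)}$ via $\mathcal{W}=\mathcal{W}_1\sqcup\mathcal{W}_2$ and \eqref{eq1230}''; it would be worth writing out the chain of intersections, but the idea is correct.
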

\begin{proof}
To prove claim (1), we first prove that $f$ is well-defined.  We claim that $W_{\lambda} \in \mathcal{W}_2$. Indeed, if $W_{\lambda} \in \mathcal{W}_1$, then
\begin{align*} 
G_{(\Lambda_{\mathcal{U}_1})} &= \bigcap_{\substack{U \in \mathcal{U}_1 \\ W \in \mathcal{W}_1}}{\left(G_{U} \cap G_{W}\right)} \\
&= \left(\bigcap_{U \in \mathcal{U}_1 \cup \left\{U_{\lambda}\right\}}{G_U}\right)\cap\left(\bigcap_{W \in \mathcal{W}_1}{G_W}\right) \\
&= G_{(\Lambda_{\mathcal{U}_1} \cup \left\{\lambda\right\})},
\end{align*} 
which contradicts the independence of $\Lambda$. This fact implies that $f$ is well-defined. Moreover, it is onto by definition. Finally, we prove that $f$ is one-to-one. Assume that there exist two distinct edges $\lambda, \mu \in \Lambda \setminus \Lambda_{\mathcal{U}_1}$ such that $W_{\lambda}=W_{\mu}$. Then, by using \eqref{eq1230}, we have
\begin{align*} 
G_{(\Lambda_{\mathcal{U}_1} \cup \left\{\lambda,\mu\right\})} &= \bigcap_{\substack{U \in \mathcal{U}_1 \\ W \in \mathcal{W}_1}}{\left(G_{U} \cap G_{W}\right)} \cap \left(G_{U_{\lambda}}\cap G_{W_{\lambda}}\right) \cap \left(G_{U_{\mu}}\cap G_{W_{\mu}}\right)\\
&= \left(\bigcap_{U \in \mathcal{U}_1 \cup \left\{U_{\lambda}\right\} \cup \left\{U_{\mu}\right\}}{G_U}\right)\cap \left(\bigcap_{W \in \mathcal{W}_1 \cup \left\{W_{\lambda}\right\}}{G_W}\right) \\
&= \left(\bigcap_{U \in \mathcal{U}_1 \cup \left\{U_{\lambda}\right\}}{G_U}\right)\cap \left(\bigcap_{W \in \mathcal{W}_1 \cup \left\{W_{\lambda}\right\}}{G_W}\right) \\
&= G_{(\Lambda_{\mathcal{U}_1} \cup \left\{\lambda\right\})},
\end{align*} 
which is a contradiction. Hence $f$ is a bijection, in particular $|\mathcal{W}_2|=|\Lambda \setminus \Lambda_{\mathcal{U}_1}|= k-s$. 

For part (2), by using the previous result and \eqref{eq1230}, we have
\begin{align*} 
G_{(\Lambda)} &= G_{(\Lambda_{\mathcal{U}_1})} \cap G_{(\Lambda \setminus \Lambda_{\mathcal{U}_1} )} = H \cap G_{(\Lambda \setminus \Lambda_{\mathcal{U}_1} )} \\
&= H \cap \left( \bigcap_{\lambda \in \Lambda \setminus \Lambda_{\mathcal{U}_1}}{G_{U_{\lambda}} \cap G_{W_{\lambda}}}\right) \\
&= \left(H \cap \bigcap_{\lambda \in \Lambda \setminus \Lambda_{\mathcal{U}_1}}{G_{U_{\lambda}}}\right) \cap \left(H \cap \bigcap_{\lambda \in \Lambda \setminus \Lambda_{\mathcal{U}_1}}{G_{W_{\lambda}}}\right)  \\
&= H \cap \left(\bigcap_{W \in \mathcal{W}_2}{G_W}\right) =H_{(\mathcal{W}_2)}.\qedhere
\end{align*}

\end{proof}

Now consider the action of $H$ on $\Omega_{n-m}$, namely we focus on the vertices on the right-hand side of the graph. As a consequence of the previous lemma, we obtain the following corollary.

\begin{cor} The set $\mathcal{W}_2$ is independent for the action of $H$ on $\Omega_{n-m}$. 
\end{cor}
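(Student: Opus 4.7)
The plan is to argue by contradiction: assume that $\mathcal{W}_2$ is not independent for the action of $H$ on $\Omega_{n-m}$, and then produce a proper subset of $\Lambda$ whose pointwise stabilizer equals $G_{(\Lambda)}$, contradicting the independence of $\Lambda$. So suppose there exists $W_0 \in \mathcal{W}_2$ such that $H_{(\mathcal{W}_2)} = H_{(\mathcal{W}_2 \setminus \{W_0\})}$. Let $\lambda_0 \in \Lambda \setminus \Lambda_{\mathcal{U}_1}$ be the unique edge with $f(\lambda_0) = W_0$, given by part (1) of the previous lemma, and write $\lambda_0 = \{U_{\lambda_0}, W_0\}$.

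The target subset of $\Lambda$ will be $\Lambda \setminus \{\lambda_0\}$. By part (2) of the previous lemma, $G_{(\Lambda)} = H_{(\mathcal{W}_2)}$, so it suffices to show that
\[
G_{(\Lambda \setminus \{\lambda_0\})} = H_{(\mathcal{W}_2 \setminus \{W_0\})}.
\]
To establish this, first note that the key consequence of \eqref{eq1230} is that $H = G_{(\Lambda_{\mathcal{U}_1})}$ is contained in $\bigcap_{U \in \mathcal{U}_1} G_U = \bigcap_{U \in \mathcal{U}} G_U$; in particular $H \leq G_{U_\lambda}$ for every $\lambda \in \Lambda$, including every $\lambda \in \Lambda \setminus \Lambda_{\mathcal{U}_1}$ with $\lambda \neq \lambda_0$. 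Second, the bijection $f$ from part (1) of the previous lemma identifies the right-hand endpoints of the edges in $(\Lambda \setminus \Lambda_{\mathcal{U}_1}) \setminus \{\lambda_0\}$ with exactly $\mathcal{W}_2 \setminus \{W_0\}$. Combining these two observations gives
\[
G_{(\Lambda \setminus \{\lambda_0\})}
= H \cap \!\!\bigcap_{\substack{\lambda \in \Lambda \setminus \Lambda_{\mathcal{U}_1} \\ \lambda \neq \lambda_0}}\!\!(G_{U_\lambda} \cap G_{W_\lambda})
= H \cap \!\!\bigcap_{W \in \mathcal{W}_2 \setminus \{W_0\}}\!\! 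G_W
= H_{(\mathcal{W}_2 \setminus \{W_0\})},
\]
as required.

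Putting this together with the hypothesis $H_{(\mathcal{W}_2)} = H_{(\mathcal{W}_2 \setminus \{W_0\})}$ and part (2) of the previous lemma yields $G_{(\Lambda)} = G_{(\Lambda \setminus \{\lambda_0\})}$, contradicting the independence of $\Lambda$. The only subtlety — and what I expect to be the main thing to double-check — is the claim that $H$ already lies in $G_{U_\lambda}$ for every $\lambda \in \Lambda$, which is exactly what allows the ``extra'' intersections with $G_{U_\lambda}$ to be absorbed into $H$; this is where \eqref{eq1230} is doing the real work. Everything else is bookkeeping and mirrors the argument already carried out for the complete-matching case.
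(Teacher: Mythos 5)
Your proof is correct and follows essentially the same route as the paper: reduce to showing $G_{(\Lambda\setminus\{\lambda_0\})}=H_{(\mathcal{W}_2\setminus\{W_0\})}$ and then invoke part~(2) of the lemma and the assumed redundancy to contradict the independence of $\Lambda$. The one cosmetic difference is that the paper establishes the identification of $G_{(\Lambda\setminus\{\lambda\})}$ with $H\cap\bigcap_{W\in\mathcal{W}_2,\,W\neq W_\lambda}G_W$ via a two-case analysis on whether $U_\lambda\in\mathcal{U}_1$, whereas you bypass the case split by observing directly that $H\le\bigcap_{U\in\mathcal{U}}G_U$ (a consequence of~\eqref{eq1230}), so each $G_{U_\lambda}$ factor is absorbed into $H$ -- a slight streamlining of the same idea.
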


\begin{proof} Assume not, then there would exist $\lambda \in \Lambda \setminus \Lambda_{\mathcal{U}_1}$ such that
\[ H_{(\mathcal{W}_2)} =\bigcap_{W \in \mathcal{W}_2 }{H_{W}} = \bigcap_{\substack{W \in \mathcal{W}_2 \\ W \neq W_{\lambda}} }{H_{W}}.
\] 
If $U_\lambda \in \mathcal{U}_1$, then there exists $\mu\in \Lambda_{\mathcal{U}_1}$ such that $U_\lambda=U_\mu$. If $U_\lambda \not\in \mathcal{U}_1$, then $\bigcap\limits_{U\in \mathcal{U}} G_U = \bigcap\limits_{U\in \mathcal{U}, U\neq U_\lambda} G_U$.

In either case, this, along with \eqref{eq1230} and part (2) of the previous lemma, implies that
\begin{align*}
 G_{(\Lambda \setminus \left\{\lambda \right\})}&= \left(\bigcap_{U \in \mathcal{U} }{G_{U}}\right) \cap \left(\bigcap_{\substack{W \in \mathcal{W} \\ W \neq W_{\lambda}}}{G_{W}}\right)\\
&= \left(\bigcap_{U \in \mathcal{U}_1}{G_{U}} \cap \bigcap_{W \in \mathcal{W}_1}{G_{W}}\right) \cap \left(\bigcap_{\substack{W \in \mathcal{W}_2 \\ W \neq W_{\lambda}}}{G_{W}} \right) \\
&= H \cap \bigcap_{\substack{W \in \mathcal{W}_2 \\ W \neq W_{\lambda}} }{G_{W}} \\
&= \bigcap_{\substack{W \in \mathcal{W}_2 \\ W \neq W_{\lambda}} }{H_{W}}=H_{(\mathcal{W}_2)}=G_{(\Lambda)}.
\end{align*}

This contradicts the independence of $\Lambda$.
\end{proof}

The previous lemma implies that
\[k-s \leq \Height(H, \Omega_{n-m}).
\]
Now, by Lemma~\ref{l: subgroup}, we have $\Height(H, \Omega_{n-m}) \leq \Height(G, \Omega_{n-m})$.
Putting these things together yields that 
\[k \leq \Height(G,\Omega_m)+ \Height(G,\Omega_{n-m}),\]
and Lemma~\ref{l: k} is proved in the general case. We are ready to prove Lemma~\ref{l: psl 2}.

\begin{proof}[Proof of Lemma~\ref{l: psl 2}]
 Suppose, first that $G\leq \PGammaL_n(q)$. Recall that the actions of $G$ on $\Omega_m$ and $\Omega_{n-m}$ are permutation isomorphic. Thus, Lemma~\ref{l: k} and \eqref{eq:197} imply that
\[ \Height(G, \Omega^{(i)}) < 4mn+2\log\log_pq.
\]
(Here we are tacitly assuming that $m<n/2$ because, for instance,~\eqref{eq:197} does use the hypothesis that $m<n/2$. However, this is not a restriction because the duality conjugates the $m$-space stabilisers in $(n-m)$-space stabilisers.)

Now let $G \nleq \PGammaL_n{(q)}$, that is $G$ contains the inverse transpose automorphism. Let $H = G \cap \PGammaL_n(q)$. Then $H$ is a normal subgroup of $G$ of index 2. Then, by Lemma \ref{l: chain} we have 
\[ \Height(G, \Omega^{(i)}) \leq \Height(H, \Omega^{(i)})+1,
\]
and therefore
\begin{equation}\label{eq:4.11}
\Height(G, \Omega^{(i)}) < 4mn+2\log\log_pq +1.
\end{equation}
In view of~\eqref{eq: t bound 2},  to prove the result it is sufficient to check that
\[
 4mn+2\log\log_pq +1  < \frac{17}{2}\log(q^{m(n-m)}). 
\] 
The result follows directly.
\end{proof}

\subsection{The other classical groups}

In this subsection we suppose that $G_0={\rm Cl}(V)$ is one of the other classical groups defined on a vector space $V$ of dimension $n$ over $\Fq$. In this case, $m$ is an integer such that $0<m<n$ and $\Omega$ is a $G$-orbit on the set of totally isotropic/ totally singular/ non-degenerate subspaces of dimension $m$ in $V$. As usual we set $t=|\Omega|$.

The result that we need is the following.

\begin{lem}\label{l: other classical} 
Let $G$ be an almost simple group with socle ${\rm Cl}_n(q)$, and consider $G$ as a permutation group in a subspace action. Then
\[ \Height(G) < \frac{17}{2}\log t.
\]
\end{lem}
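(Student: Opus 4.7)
The strategy mirrors the PSL case treated in Lemmas~\ref{l: psl} and~\ref{l: psl 2}: embed the $G$-orbit $\Omega$ into the action of $\PGammaL$ on the set $\Omega_m$ of \emph{all} $m$-subspaces of the natural module $V$, apply the sharp estimate~\eqref{eq:197} of Lemma~\ref{l: psl}, and then verify that the resulting bound fits inside $\frac{17}{2}\log t$. Since ${\rm Cl}(V)\leq \PGL(V)$ and every outer automorphism of ${\rm Cl}(V)$ relevant to a subspace action acts as a semilinear transformation of $V$ (over $\Fq$ in the symplectic/orthogonal case, over $\F_{q^2}$ in the unitary case), we have $G\leq \PGammaL_n(q')$ with $q'\in\{q,q^2\}$, with the possible exception of a single outer involution (the element of $\mathrm{PO}_{2k}^{+}(q)\setminus\mathrm{P\Omega}_{2k}^{+}(q)$ swapping the two orbits of Lagrangians); in that event we first pass to the index-$2$ subgroup $G\cap \PGammaL_n(q')$ and recover a single additional unit in the bound via Lemma~\ref{l: chain}, as in Lemma~\ref{l: psl}.

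Totally singular, totally isotropic, non-degenerate, and (in characteristic $2$ orthogonal) non-singular $1$-dimensional subspaces are all genuine $m$-subspaces of $V$, so $\Omega\subseteq \Omega_m$. Applying Lemma~\ref{l: subgroup} with $H=G$ and ambient group $\PGammaL_n(q')$ gives
\[
  \Height(G,\Omega)\le \Height(\PGammaL_n(q'),\Omega_m)\le 2\min(m,n-m)\,n+\log\log_p q',
\]
where the second inequality is the refined estimate~\eqref{eq:197}. Thus it suffices to verify, in each case, that
\[
  2\min(m,n-m)\,n+\log\log_p q'\;<\;\tfrac{17}{2}\log t.
\]
For this one runs a case-analysis based on the type of orbit, using the standard explicit orbit-counts (as compiled in Liebeck~\cite{liebeck} or in \cite[\S4]{kl}). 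For non-degenerate $m$-subspaces, $t$ differs from $|\Omega_m|$ by at most a polynomial-in-$n$ factor, so the bound follows from~\eqref{eq: t bound}. For totally singular/isotropic $m$-spaces (with $m$ at most the Witt index $w\le n/2$) one has $\log t\ge (m(n-m)-\binom{m}{2})\log q$ up to an orthogonal-type correction, which is comfortably $\ge \tfrac{4}{17}\cdot 2mn$ in the asymptotic range. The orthogonal non-singular $1$-space case in even characteristic is handled by a short direct count.

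The principal obstacle is the boundary regime of the final step: when $m$ approaches the Witt index (in particular for Lagrangian subspaces in $\Sp_{2k}(q)$, $\mathrm{P}\Omega_{2k}^{+}(q)$, or $\PSU_{2k}(q)$), the orbit $t$ shrinks relative to our upper bound and the numerical inequality $2mn+\log\log q'<\tfrac{17}{2}\log t$ becomes tight—especially for $q=2$ and small $n$. I expect these extremal parameter choices to consume the bulk of the work: either one invokes a sharper subspace count (exploiting the parabolic structure of the stabilizer of a maximal totally-singular subspace, which forces an extra factor in the exponent) or, in the handful of exceptional small cases (comparable to the $\Sym(n)$-on-partitions cases in Lemma~\ref{l: an partition}), one checks the inequality by direct computation. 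The constant $\tfrac{17}{2}$ has evidently been chosen as the minimum value that survives the worst Lagrangian configuration uniformly across the non-linear classical families.
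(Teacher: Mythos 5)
Your strategy is essentially the one the paper follows: embed $\Omega$ in the $\PGammaL$-action on all $m$-spaces, apply the refined bound~\eqref{eq:197} of Lemma~\ref{l: psl}, and compare with a lower bound on $\log t$. That much is correct, and your sensitivity to the $q$ versus $q^2$ distinction in the unitary case is appropriate (the paper elides this). However, there are real gaps in how you propose to finish.

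First, you leave the numerical verification almost entirely implicit. The paper's key intermediate result (its Lemma~\ref{l: t lower}) is a clean, uniform statement: \emph{either} $(G_0,m)=(\POmega_n^+(q),n/2)$, \emph{or} $t>q^{\frac12 m(n-m)}$. You instead gesture at approximate counts like $\log t\ge(m(n-m)-\binom{m}{2})\log q$ ``up to an orthogonal-type correction'', which is not a proof and in fact does not cleanly isolate the one genuinely bad case. Second, you misidentify the exceptional family: you predict Lagrangian subspaces in $\Sp_{2k}(q)$, $\POmega_{2k}^+(q)$, \emph{and} $\PSU_{2k}(q)$ are all tight. A direct check shows the maximal totally isotropic spaces for $\Sp$ give roughly $q^{k(k+1)/2}>q^{k^2/2}$ and for $\SU$ roughly $q^{k^2}$ (over $\F_{q^2}$, again enough); only $\POmega_{2k}^+(q)$ with $m=k$, where the count is roughly $q^{k(k-1)/2}<q^{k^2/2}$, actually violates the bound $t>q^{\frac12 m(n-m)}$. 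Third, for that single exceptional case you propose either ``a sharper subspace count'' or a finite direct computation in small cases, but neither matches what is needed. The exception is an infinite family and the paper handles it by a different, uniform argument: combining the trivial inequality $\Height(G)\le\log|G|$ with $|G|<q^{\frac{n^2}{2}-\frac{n}{2}+1}$ and $t\ge q^{\frac{n^2}{8}-\frac{n}{4}}$ to get $\Height(G)<5\log t$ (the small cases $n\le 8$ are already covered by Lemma~\ref{l: omega 8}). Without that alternative argument your proof does not close.
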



In \cite[Table~4.1.2]{burness_giudici} we find a list of the degrees of all such actions and, using the notation just established, one can easily verify the following fact.

\begin{lem}\label{l: t lower}
Either $G_0\cong \POmega_n^+(q)$ and $m=\frac{n}{2}$ or $t>q^{\frac12 m(n-m)}$.
\end{lem}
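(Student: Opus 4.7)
The plan is to verify the stated dichotomy by inspecting the explicit orbit-size formulas catalogued in \cite[Table~4.1.2]{burness_giudici}. That table records, for each classical socle $G_0\in\{\PSU_n(q),\PSp_n(q),\POmega_n^\epsilon(q)\}$ and each admissible subspace type (totally isotropic, totally singular, non-degenerate, or---in even characteristic for $\PSp_n(q)$---non-singular $1$-spaces), the corresponding value of $t=|\Omega|$ as a product of $q$-binomial-like factors. Throughout, I use the elementary bound $q^a-\varepsilon\ge q^{a-1}(q-1)\ge q^{a-1}$ for $\varepsilon\in\{0,\pm 1\}$, which converts such products into clean monomial lower bounds in $q$.

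The argument splits naturally into three cases. First, when $U$ is a non-degenerate $m$-subspace, orbit-stabilizer gives $t=|G_0|/|(G_0)_U|$ where $(G_0)_U$ is (up to bounded index) $\mathrm{Cl}(U)\times\mathrm{Cl}(U^\perp)$. Dividing orders and cancelling, the leading power of $q$ in $t$ is at least $q^{m(n-m)}$ in every relevant case (unitary, symplectic, orthogonal of each type), which already far exceeds the required $q^{m(n-m)/2}$. Second, when $U$ is totally isotropic or totally singular with $m\le\lfloor n/2\rfloor$, the degree can be written as
$$t=\prod_{i=0}^{m-1}\frac{q^{a_i}-\varepsilon_i}{q^{m-i}-1},$$
with the integers $a_i$ determined by the type of form (for instance $a_i=2(k-i)$ when $G_0=\PSp_{2k}(q)$). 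Extracting the leading monomial, the relevant exponent is $\sum_{i=0}^{m-1}(a_i-(m-i))$, and a one-line calculation per subcase (e.g.\ $\frac{m(4k-3m+1)}{2}$ in the symplectic case with $n=2k$) shows it exceeds $\tfrac12 m(n-m)$ by at least $\tfrac12 m(2k-2m+1)\ge\tfrac12$ whenever $m<n/2$.

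The third and decisive case is maximal totally singular subspaces in orthogonal geometry. For $\POmega_n^-(q)$ the maximal isotropic dimension is $(n-2)/2<n/2$, and in odd dimension it is $(n-1)/2<n/2$, so both reduce to the second case above. Only for $\POmega_n^+(q)$ with $m=n/2$ does the exceptional phenomenon appear: here the set of maximal totally singular subspaces splits into two $G_0$-orbits of equal size $\prod_{i=1}^{n/2-1}(q^i+1)$, whose leading exponent $\binom{n/2}{2}=\tfrac{n(n-2)}{8}$ is strictly smaller than $\tfrac12 m(n-m)=\tfrac{n^2}{8}$. Thus the bound genuinely fails in this case, which is exactly why it must be excluded from the statement.

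The main obstacle is organisational rather than conceptual: Table~4.1.2 contains many rows (one per pairing of classical type and subspace type), and one must verify the exponent comparison in each. However, each individual verification reduces to comparing two polynomials in $m$ and $n$ of low degree, and the slack $\tfrac12$ in the exponent of the target bound $q^{m(n-m)/2}$ is generous enough that no case (apart from the single exceptional one) is tight.
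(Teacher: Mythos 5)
Your proposal is correct and follows essentially the same route as the paper: a case-by-case inspection of the degrees in \cite[Table~4.1.2]{burness_giudici}, converting each $q$-product formula into a monomial lower bound and comparing exponents with $\tfrac12 m(n-m)$, with the hyperbolic orthogonal case $m=\tfrac{n}{2}$ isolated as the unique failure. The paper's own proof is likewise only a sketch (it checks divisibility in several cases and works out Case VI explicitly), so your organisation by subspace type is an acceptable, arguably cleaner, packaging of the same argument.
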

\begin{proof}
The proof of this lemma is rather tedious: it is simply a case-by-case analysis of the cases described in Table~4.1.2 of~\cite{burness_giudici}. 

We make a couple of brief remarks about the proof and leave the details to the reader. Observe, first, that for a number of the cases (III, X with $q$ odd, XI with $q$ odd, XIII, XV, XVI), $t$ is divisible by $q^{\frac12 m(n-m)}$ and so the result follows immediately.

For the rest, one must demonstrate the bound directly. To illustrate we consider Case VI which is where the exception occurs; in this case $G_0\cong \POmega_n^+(q)$ and $m\leq \frac{n}{2}$. Then we have
\begin{align*}
 t &= \frac{(q^{n/2}-1)(q^{(n-2m)/2}+1) \prod_{i=(n-2m)/2+1}^{(n-2m)/2+(m-1)}(q^{2i}-1)}{\prod_{i=1}^m(q^i-1)} \\
 &= \frac{(q^{n/2}-1)(q^{(n-2m)/2}+1)}{q^m-1} \cdot \prod_{i=1}^{m-1}\left(\frac{q^{n-2m+2i}-1}{q^i-1}\right) \\
 &> \frac{(q^{n/2}-1)(q^{(n-2m)/2}+1)}{q^m-1} \cdot \prod_{i=1}^{m-1}q^{n-2m+i} \\
 &> q^{n/2-m+n/2-m} \cdot q^{(n-2m)(m-1)+\frac12(m-1)m} \\
 &=q^{nm-\frac32m^2-m}.
\end{align*}
Now it is easy to check that the bound is satisfied if and only if $m<\frac{n}{2}$, as required.
\end{proof}

We are ready to prove Lemma~\ref{l: other classical}.

\begin{proof}[Proof of Lemma~\ref{l: other classical}]
 Observe first that $G\leq \PGammaL_n(q)$ and that the action we are studying is an action on $\Gamma$, the set of $m$-spaces. Then Lemma~\ref{l: subgroup} implies that
\[
 \Height(G,\Omega)\leq \Height(\PGammaL_n(q), \Gamma).
 \]
In Lemma~\ref{l: psl} we give an upper bound for $\Height(\PGammaL_n(q), \Gamma)$:
 \[
\Height(G) <  2\min(m,n-m)n + \log \log_p{q}.
\]
Thus, in light of Lemma~\ref{l: t lower}, provided we do not have $(G_0,m)=(\POmega_n^+(q),\frac{n}{2})$ it is enough to prove that
\[
 2\min(m,n-m)n + \log \log_p{q} < \frac{17}{2}\log (q^{\frac12 m(n-m)}).
\]
This is easily verified. Assume that $(G_0,m)=(\POmega_n^+(q),\frac{n}{2})$. In this case $t\geq q^{\frac{n^2}{8}-\frac{n}{4}}$. Lemma~\ref{l: omega 8} implies that we can assume that $n\geq 10$ and thus
\[
 |G|\leq 2fq^{n(n-2)/4}(q^{n/2}-1) \prod_{i=1}^{n/2-1}(q^{2i}-1) < 2f q^{\frac{n^2}{2}-\frac{n}{2}} \leq q^{\frac{n^2}{2}-\frac{n}{2}+1}.
\]
Now observe that
\[
 \Height(G) \leq \log |G| < \log(q^{\frac{n^2}{2}-\frac{n}{2}+1}) < 5 \log( q^{\frac{n^2}{8}-\frac{n}{4}}) < 5 \log t,
\]
and we are done.
\end{proof}

Proposition~\ref{p: as height} is a consequence of Lemmas~\ref{l: psl}, \ref{l: psl 2} and \ref{l: other classical}. 

\begin{proof}[Proof of Theorem~\ref{t: height}]
Observe that $17/2+3/(2x)<9$, as long as $x>3$ (so $|\Delta|>8$). With this remark, the proof follows from Theorem~\ref{t: irred}, and Propositions~\ref{p: I CD and PA} and \ref{p: as height}, except when $G$ is a primitive group of PA type with $G\le H\wr \Sym(\ell)$ and the domain of $G$ is $\Omega=\Delta^\ell$ and $|\Delta|\le 8$. If $H=\Alt(\Delta)$ or $H=\Sym(\Delta)$, then $G$ is a large-base group and hence we may omit these cases for the rest of the proof. As $H$ is a primitive group of degree $5,6,7$ or $8$ and as $H$ is neither $\Alt(\Delta)$ nor $\Sym(\Delta)$, we deduce that one of the following holds
\begin{itemize}
\item $|\Delta|=6$ and $H=\PSL_2(5)$ or $H=\PGL_2(5)$ in its natural action on the projective line,
\item $|\Delta|=7$ and $H=\PSL_3(2)$ in its natural action on the projective plane,
\item $|\Delta|=8$ and $H=\PSL_2(7)$ or $H=\PGL_2(7)$ in its natural action on the projective line.
\end{itemize}
In each of these groups, we have $\mathrm{H}(H)\le 3$ and hence from~\eqref{e: chain} we have $\mathrm{H}(G)\le 3\ell+3\ell/2=9\ell/2$. As $9\ell/2<9\log(t)$, the proof is completed.
\end{proof}

\end{document}